\theoremstyle{theorem}
\newtheorem{theorem}{Theorem}[section]
\newtheorem{question}[theorem]{Question}
\newtheorem{remark}[theorem]{Remark}
\newtheorem{proposition}[theorem]{Proposition}
\newtheorem{lemma}[theorem]{Lemma}
\newtheorem{corollary}[theorem]{Corollary}
\theoremstyle{definition}
\newtheorem{definition}[theorem]{Definition}
\numberwithin{equation}{section}
\newcommand{\R}{\mathbb R}
\newcommand{\pa}{\overset{\leftarrow}{\mathrm{par}}}
\newcommand{\bik}{\mathrm{bik}}
\newcommand{\N}{\mathbb N}
\renewcommand{\c}{\mathcal C}
\newcommand{\Z}{\mathbb Z}
\def\B{\mathcal B}
\def\End{\mathrm{End}}
\def\Im{\mathrm{Im}}
\def\nub{\mathrm{nub}}
\def\normal{\mathrel{\unlhd}}
\global\def\f{\phi}
\def\U{\mathcal U}
\newcommand{\tdlc}{t.d.l.c.\ }
\numberwithin{equation}{section}
\title[Topological entropy in t.d.l.c. groups]{Topological entropy in totally disconnected\\ locally compact groups}
\author{Anna Giordano Bruno}
\address{Anna Giordano Bruno - Dipartimento di Matematica e Informatica,
Universit\`{a} di Udine\\Via delle Scienze 206, 33100 Udine, Italy}
\curraddr{}
\email{anna.giordanobruno@uniud.it}
\thanks{The first named author is supported by Programma SIR 2014 by MIUR, Project GADYGR, Number RBSI14V2LI, cup G22I15000160008}
\author{Simone Virili}
\address{Simone Virili - Dipartimento di Matematica Pura e Applicata, 
Universit\`{a} di Padova\\Via Trieste 63, 35121 Padova, Italy}
\curraddr{}
\email{virili.simone@gmail.com}
\thanks{The second named author was partially
 supported by Fondazione Cassa di Risparmio di Padova e Rovigo (Progetto di Eccellenza ``Algebraic structures and their applications''), and by the projects DGI MINECO MTM2011-28992-C02-01 and MINECO MTM2014-53644-P (Spain)}
\begin{document}


\begin{abstract}
Let $G$ be a topological group, let $\phi$ be a continuous endomorphism of $G$ and let $H$ be  a closed $\phi$-invariant subgroup of $G$. We study whether the topological entropy is an additive invariant, that is, $$h_{top}(\phi)=h_{top}(\phi\restriction_H)+h_{top}(\bar\phi)\,,$$ where $\bar\phi:G/H\to G/H$ is the map induced by $\phi$. We concentrate on the case when $G$ is locally compact totally disconnected and $H$ is either compact or normal. Under these hypotheses, we show that the above additivity property holds true whenever $\phi H=H$ and $\ker(\phi)\leq H$.
As an application we give a dynamical interpretation of the scale $s(\phi)$, by showing that $\log s(\phi)$ is the topological entropy of a suitable map induced by $\phi$. Finally, we give necessary and sufficient conditions for the equality $\log s(\phi)=h_{top}(\phi)$ to hold.
\end{abstract}

\subjclass[2010]{37B40, 22D05, 22D40, 54H11, 54H20, 54C70}
\keywords{Topological entropy, totally disconnected locally compact group, scale, continuous endomorphism, Addition Theorem}

\maketitle

\section{Introduction}

Topological entropy for continuous self-maps of compact spaces was introduced in \cite{AKM} by Adler, Konheim and McAndrew, in analogy with the measure entropy studied in ergodic theory by Kolmogorov and Sinai.
In his celebrated paper \cite{B}, Bowen gave a definition of entropy for uniformly continuous self-maps of metric spaces. Later on, Hood in \cite{hood} extended Bowen's entropy to uniformly continuous self-maps of uniform spaces. This notion of entropy is sometimes called \emph{uniform entropy}, and it coincides with the topological entropy in the compact case (when the given compact topological space is endowed with the unique uniformity compatible with the topology). For this reason we call \emph{topological entropy} also Hood's extension and we denote it by $h_{top}$ (see \S\ref{htop-sec} for a definition). 

\medskip
Let $G$ be a topological group and let $\phi:G\to G$ be a continuous endomorphism. When endowed with its left uniformity $\U$, then $(G,\U)$ is a uniform space, and $\phi$ is uniformly continuous with respect to $\U$. Hence, Hood's definition of the topological entropy $h_{top}$ applies to any given continuous endomorphism $\phi:G\to G$. Similarly, if $H$ is a closed subgroup of $G$, the set $G/H$ of the left cosets of $H$ in $G$ inherits from $G$ a natural uniform structure $\bar\U$ (see \S\ref{tdlcq}), that we call left uniformity of $G/H$ and that generates the quotient topology of $G/H$. If $H$ is $\phi$-invariant, the map $\bar\phi:G/H\to G/H$ induced by $\phi$ is uniformly continuous with respect to $\bar\U$, so $h_{top}(\bar\phi)$ is defined. 

\medskip
In this paper we study the following general question (see \cite[Question 4.3]{DSV} in the locally compact Abelian case) when $G$ is a totally disconnected locally compact (briefly, t.d.l.c.) group.

\begin{question}\label{ATq}
Let $G$ be a topological group, $\phi:G\to G$ a continuous endomorphism and $H$ a closed $\phi$-invariant subgroup of $G$. Is it true that 
\begin{equation}\label{AT-eq}
h_{top}(\phi)=h_{top}(\phi\restriction_H)+h_{top}(\bar\phi)\, ,
\end{equation}
where $\bar\phi:G/H\to G/H$ is the map induced by $\phi$?
\end{question}

We say that the Addition Theorem holds if the formula \eqref{AT-eq} is verified. 
Some instances of the Addition Theorem are already known. Indeed, as a consequence of \cite[Corollary 4.7]{DSV},  the Addition Theorem holds when $H$ is a normal and {\em open} subgroup of the locally compact group $G$; in fact, one can directly check that, under these strong assumptions, 
$$h_{top}(\bar\phi)=0 \ \ \ \text{ and }\ \ \ h_{top}(\phi)=h_{top}(\phi\restriction_H)\, .$$
Moreover, it is known from \cite[Theorem 4.5.8]{DG-islam} that, if $G_1$ and $G_2$ are \tdlc groups and $\phi_i:G_i\to G_i$ is a continuous endomorphism for $i=1,2$, then $h_{top}(\phi_1\times\phi_2)=h_{top}(\phi_1)+h_{top}(\phi_2)$.

\medskip
An important known case of the Addition Theorem is the compact one: when $G$ is a compact group and $H$ is a closed $\phi$-invariant normal subgroup of $G$ then \eqref{ATq} holds true. Yuzvinski proved this  in \cite{Y}  for separable compact groups (a generalization for the measure entropy was given by Thomas in \cite{Thomas}). Later on, Bowen proved in \cite[Theorem 19]{B} a version of the Addition Theorem for compact metric spaces. 
The general statement, when $G$ is compact but not necessarily metrizable, is deduced from the metrizable case in \cite[Theorem 8.3]{Dik+Manolo}.\\
Let us also remark that, after the introduction of entropy for actions of amenable groups in \cite{OrnWeiss}, there has been considerable effort to generalize Yuzvinski's Addition Theorem to this context. Some of the main steps in this development have been done, chronologically, in  \cite{LSW} (for actions of $\Z^d$), \cite{Miles} (for actions of a general countable torsion-free Abelian group, so in particular $\Z^{(\N)}$), and \cite{Li} (where Li proved a very general Addition Theorem for actions of a countable amenable group).

\medskip
As mentioned above, in this paper we consider Question \ref{ATq} for \tdlc groups. For these groups van Dantzig proved in \cite{vD} that the family
$$\B(G):=\{U\leq G: \text{$U$ compact and open}\}\,$$
is a base for the neighborhoods of $1$ in $G$.  As noticed in \cite{DSV} (see \S\ref{htop-sec} and Proposition \ref{lim}), the topological entropy of a continuous endomorphism $\phi:G\to G$ of a \tdlc group $G$ can be computed as
$$h_{top}(\phi)=\sup\{H_{top}(\phi,K):K\in\B(G)\}\,,\ \text{where}\ \ H_{top}( \phi, K)=\lim_{n\to\infty}\frac{\log [K:K_{-n}]}{n}\,;$$
here, $K_{-n}=K\cap\phi^{-1}K\cap\ldots\cap\phi^{-n}K\in\B(G)$, and the index $[K:K_{-n}]$ is finite since $K_{-n}$ is open in the compact subgroup $K$.

If $H$ is a closed $\phi$-invariant subgroup of $G$,  and $H$ is compact but not necessarily normal, we see in \S \ref{htop-sec} how Hood's definition of topological entropy applies to the map $\bar\phi:G/H\to G/H$,  obtaining the following formula (see Proposition \ref{lim}):
$$h_{top}(\bar \phi)=\sup\{H_{top}( \phi, K):H\leq K\in \B(G)\}\,.$$

The main result of this paper is the following instance of the Addition Theorem:

\begin{theorem}\label{AT}
Let $G$ be a \tdlc group, $\phi:G\to G$ a continuous endomorphism and $H$ a closed $\phi$-stable subgroup of $G$ containing $\ker(\phi)$. If $H$ is either normal or compact, then 
\begin{equation}\tag{$*$}h_{top}(\phi)= h_{top}(\phi\restriction_H)+h_{top}(\bar\phi)\,,\end{equation}
where $\bar\phi:G/H\to G/H$ is the map induced by $\phi$.
\end{theorem}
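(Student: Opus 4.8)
The plan is to reduce, via van Dantzig's theorem and Proposition~\ref{lim}, to the asymptotics of the indices $[K:K_{-n}]$ for $K$ in a cofinal family of $\B(G)$, and to compare these with the corresponding quantities for $\bar\phi$ and $\phi\restriction_H$. Write $\pi\colon G\to G/H$ for the quotient map. I note first that $\ker(\phi)\le H$ together with $\phi H=H$ forces $\phi^{-1}(H)=H$ (if $\phi(x)\in H=\phi(H)$ then $\phi(x)=\phi(h)$ for some $h\in H$, so $xh^{-1}\in\ker(\phi)\le H$); equivalently, $\bar\phi$ is \emph{injective}, and this is the only consequence of the kernel hypothesis I will use. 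I would then collect the bookkeeping: $H_{top}(\phi,-)$ is monotone on $\B(G)$ (if $K'\le K$ then $H_{top}(\phi,K')\ge H_{top}(\phi,K)$, since the open cover of $G$ by left cosets of $K'$ refines that by left cosets of $K$); the maps $K\mapsto K\cap H$ and $K\mapsto\pi(K)$ are cofinal onto $\B(H)$ and $\B(G/H)$; and for any $\bar L\in\B(G/H)$ and $L'\in\B(H)$ there is $K\in\B(G)$ with $\pi(K)\le\bar L$ and $K\cap H\le L'$ (take $K$ inside the neighbourhood $\pi^{-1}(\bar L)\cap W$ of $1$, where $L'=H\cap W$ with $W$ open).

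The technical core is an index factorisation, which I would establish first when $H$ is normal. Fix $K\in\B(G)$ and put $\bar K=\pi(K)$, $L=K\cap H$. Normality gives $L\normal K$, so $LK_{-n}$ is a subgroup of $K$; one checks that $K_{-n}\cap H=L_{-n}$ (the $(-n)$-truncation taken for $\phi\restriction_H$), whence $[LK_{-n}:K_{-n}]=[L:L_{-n}]$, while $[K:LK_{-n}]=[\bar K:\pi(K_{-n})]$ because $L=\ker(\pi\restriction_K)\subseteq LK_{-n}$. Thus
\[
[K:K_{-n}]=[\bar K:\pi(K_{-n})]\cdot[L:L_{-n}].
\]
Since $\pi$ intertwines $\phi$ with $\bar\phi$ one has $\pi(K_{-n})\subseteq(\bar K)_{-n}$, hence $[\bar K:\pi(K_{-n})]\ge[\bar K:(\bar K)_{-n}]$; dividing by $n$, letting $n\to\infty$, and then taking suprema over $K$ (using the monotonicity and joint cofinality above) gives the superadditivity $h_{top}(\phi)\ge h_{top}(\phi\restriction_H)+h_{top}(\bar\phi)$. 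This inequality requires neither $\phi H=H$ nor $\ker(\phi)\le H$.

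The main obstacle is the reverse inequality $h_{top}(\phi)\le h_{top}(\phi\restriction_H)+h_{top}(\bar\phi)$: a priori $\pi(K_{-n})$ can be strictly smaller than $(\bar K)_{-n}$, and indeed for a non-injective $\bar\phi$ the Addition Theorem genuinely fails, so the kernel hypothesis must enter precisely here. I would show that injectivity of $\bar\phi$ (equivalently $\phi^{-1}(H)=H$) forces $\pi(K_{-n})=(\bar K)_{-n}$: given $g\in K$ with $\pi(g)\in(\bar K)_{-n}$, one has for each $j\le n$ some $h_j\in H$ with $gh_j\in\phi^{-j}(K)$, and the task is to produce a \emph{single} $h\in H$ with $gh\in\phi^{-j}(K)$ for all $j\le n$ at once; the ambiguity in the choice of $h$ at step $j$ is a coset of $\ker(\phi^{j})$, which is contained in $H$ exactly because $\phi^{-1}(H)=H$, and this is what lets the successive choices be made compatibly (for $H$ normal this is an induction on $n$). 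Granting $\pi(K_{-n})=(\bar K)_{-n}$, the factorisation upgrades to the identity $H_{top}(\phi,K)=H_{top}(\bar\phi,\bar K)+H_{top}(\phi\restriction_H,K\cap H)$, and taking suprema finishes the normal case. (An alternative for this direction is a Bowen-type inequality $h_{top}(\phi)\le h_{top}(\bar\phi)+\sup_{y}h(\phi;\pi^{-1}(y))$ for the uniform-space entropy, together with the computation $h(\phi;gH)=h_{top}(\phi\restriction_H)$ for every coset $gH$, which is immediate from left-invariance of the uniformity since $\phi^{k}(gh_1)^{-1}\phi^{k}(gh_2)=\phi^{k}(h_1)^{-1}\phi^{k}(h_2)$; the hypotheses on $H$ are then what make $\pi$ tame enough --- e.g.\ proper, when $H$ is compact --- for such an inequality.)

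When $H$ is compact but not normal, $KH$ need not be a subgroup and the factorisation is unavailable, so a separate argument is needed; here I would use compactness of $H$. Replacing $K$ by its $H$-core $K_{H}=\bigcap_{h\in H}hKh^{-1}$ --- a \emph{finite} intersection, since the distinct conjugates $hKh^{-1}$ are indexed by the finite set $H/(H\cap N_G(K))$, an element of $\B(G)$ normalised by $H$, with $H_{top}(\phi,K_{H})\ge H_{top}(\phi,K)$ by monotonicity --- reduces matters to subgroups $K$ on which $H$ acts by conjugation; the simultaneous-correction step above is then carried out inside the \emph{compact} coset $gH$, where the sets $gH\cap\phi^{-j}(K)$ are clopen, by a finite-intersection/inverse-limit argument. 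Combined with the description $h_{top}(\bar\phi)=\sup\{H_{top}(\phi,K):H\le K\in\B(G)\}$ from Proposition~\ref{lim} and with $h_{top}(\phi\restriction_H)$ for the endomorphism $\phi\restriction_H$ of the compact group $H$, this yields the compact case. I expect the verification that $\pi(K_{-n})=(\bar K)_{-n}$, in both settings, to be where essentially all the work lies.
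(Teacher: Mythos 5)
Your lower bound $h_{top}(\phi)\geq h_{top}(\phi\restriction_H)+h_{top}(\bar\phi)$ is essentially the paper's own argument (Lemmas \ref{easy<} and \ref{easy_normal}): factor $[K:K_{-n}]$ through the subgroup $(K\cap H)K_{-n}$, use $K_{-n}\cap H=(K\cap H)_{-n}$ and $\pi(K_{-n})\subseteq(\pi K)_{-n}$, and in the non-normal compact case first replace $K$ by a subgroup normalized by $H$. That half is sound and, as you say, needs neither $\phi H=H$ nor $\ker(\phi)\leq H$.

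The upper bound is where the proposal breaks down, and it is exactly the step where you place ``essentially all the work.'' The claim that $\ker(\phi)\leq H$ and $\phi H=H$ force $\pi(K_{-n})=(\bar K)_{-n}$ for every $K\in\B(G)$ is false. Take $G=\Z_p\times\Z_p$, $\phi(x,y)=(x+y,y)$ (a topological automorphism, so the kernel hypothesis is vacuous), $H=\Z_p\times\{0\}$ (closed, normal, $\phi$-stable), and $K=p\Z_p\times\Z_p$. Then $\bar\phi$ is the identity on $G/H\cong\Z_p$, so $(\bar K)_{-1}=\bar K=\Z_p$, whereas $K_{-1}=K\cap\phi^{-1}K=p\Z_p\times p\Z_p$ and $\pi(K_{-1})=p\Z_p$. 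Concretely, for $g=(x,y)$ with $x\in p\Z_p$, $y\notin p\Z_p$, correcting $g$ into $K$ forces $h=(a,0)$ with $a\in p\Z_p$, while correcting $\phi(g)$ into $K$ forces $a\equiv -y\not\equiv 0 \pmod{p}$; no single $h\in H$ does both. So the obstruction to simultaneous lifting is not a kernel/coset ambiguity --- it persists for automorphisms --- and injectivity of $\bar\phi$ cannot repair it. (What is true, and is the paper's Lemma \ref{piC}, is $\pi(K_{-n})=(\pi K)_{-n}$ for $K\supseteq H$, with no kernel hypothesis; but for such $K$ your factorization carries no information about $h_{top}(\phi\restriction_H)$.) Hence the pointwise identity $H_{top}(\phi,K)=H_{top}(\bar\phi,\bar K)+H_{top}(\phi\restriction_H,K\cap H)$ is not available by this route, and the compact non-normal case, built on the same lifting step, inherits the gap; the parenthetical Bowen-type alternative is not developed. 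The paper's upper bound instead runs through the Limit Free Formula $H_{top}(\phi,U)=\log[\phi U_+:U_+]$ (Proposition \ref{limit_free}), the description of $h_{top}(\bar\phi)$ as $\sup\log[\phi M:M]$ over compact $M\supseteq H$ with $M\leq\phi M$ of finite index (Proposition \ref{phiN}), and the index identity $[\phi M:M]=[\phi M\cap H:M\cap H]\cdot[\phi M:(\phi M\cap H)M]$ of Lemma \ref{snake}; there the hypotheses $\ker(\phi)\leq H$ and $\phi H=H$ enter only to identify $\phi M\cap H$ with $\phi(M\cap H)$.
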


In particular, the Addition Theorem holds for topological automorphisms of \tdlc groups:

\begin{corollary}
Let $G$ be a \tdlc group, $\phi:G\to G$ a topological automorphism and $H$ a closed $\phi$-stable normal subgroup of $G$. Then $$h_{top}(\phi)= h_{top}(\phi\restriction_H)+h_{top}(\bar\phi)\,,$$
where $\bar\phi:G/H\to G/H$ is the topological automorphism induced by $\phi$.
\end{corollary}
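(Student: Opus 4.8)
The plan is to obtain the Corollary as a direct consequence of Theorem \ref{AT}. The key observation is that a topological automorphism $\phi\colon G\to G$ has $\ker(\phi)=\{1\}$, so the hypothesis $\ker(\phi)\leq H$ is automatically satisfied for every subgroup $H$. Since by assumption $H$ is closed, normal, and $\phi$-stable, all the hypotheses of Theorem \ref{AT} are in force (in its ``normal'' case), and the identity
\begin{equation*}
h_{top}(\phi)=h_{top}(\phi\restriction_H)+h_{top}(\bar\phi)
\end{equation*}
follows immediately. So there is essentially nothing to prove beyond unwinding definitions.

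The one point that does require a short remark is the assertion implicit in the statement, namely that $\phi\restriction_H$ is a topological automorphism of $H$ and that $\bar\phi$ is a topological automorphism of $G/H$. For $\phi\restriction_H$ this is clear: $\phi$-stability means precisely $\phi(H)=H$, so $\phi\restriction_H\colon H\to H$ is a continuous bijective group homomorphism, and its inverse is the restriction of the continuous map $\phi^{-1}$. For $\bar\phi$, I would first note that $\phi(H)=H$ together with injectivity of $\phi$ forces $\phi^{-1}(H)=H$; hence both $\phi$ and $\phi^{-1}$ map left cosets of $H$ to left cosets of $H$ and therefore descend to self-maps $\bar\phi$ and $\overline{\phi^{-1}}$ of $G/H$, which are mutually inverse. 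Finally, since the quotient map $\pi\colon G\to G/H$ is an open continuous surjection and $\bar\phi\circ\pi=\pi\circ\phi$, continuity of $\phi$ yields continuity of $\bar\phi$, and symmetrically $\overline{\phi^{-1}}=\bar\phi^{-1}$ is continuous; thus $\bar\phi$ is a topological automorphism.

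I do not anticipate any real obstacle here: the statement is a formal corollary of Theorem \ref{AT}, and the only subtlety worth flagging is the equality $\phi^{-1}(H)=H$, which is what makes the induced map on the coset space well defined and invertible. Everything else is routine verification that the restriction and the induced quotient map inherit the automorphism property from $\phi$.
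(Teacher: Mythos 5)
Your proof is correct and is exactly the argument the paper intends: since a topological automorphism has trivial kernel, the hypothesis $\ker(\phi)\leq H$ is automatic and Theorem \ref{AT} (in its normal case) applies verbatim. The extra verification that $\phi\restriction_H$ and $\bar\phi$ are themselves topological automorphisms is a reasonable supplement but not needed for the entropy identity itself.
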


The proof of Theorem \ref{AT} is given in \S\ref{ATproof}, where we treat separately the cases when the subgroup $H$ is normal or compact.
In fact, the proofs of these two cases, even with their technical differences, use similar ideas and follow a similar pattern, that is, we prove separately the two inequalities giving the equality in $(*)$.
While the proof of the lower bound uses relatively standard arguments, the proof of the upper bound is based on a Limit Free Formula for the computation of the topological entropy (see Proposition \ref{limit_free}). Indeed, following \cite{Willis_endo}, for every $U\in\B(G)$ we can construct a compact subgroup $U_+$ of $G$ contained in $U$
(see Definition \ref{newU+}), such that $U_+\leq \phi U_+$ and
$$H_{top}(\phi,U)=\log[\phi U_+:U_+]\,.$$
The counterpart of this formula for topological automorphisms was proved in \cite{AGB} and for compact groups in \cite{DG-limitfree}.

\medskip
In Section \ref{htopvsscale}, we show a precise relation between the topological entropy and the scale, generalizing a result from \cite{BDG}.
Indeed, in the recent paper \cite{Willis_endo}, extending the same notion from \cite{Willis}, Willis defined the \emph{scale} of a continuous endomorphism $\phi$ of a \tdlc group $G$ as the positive integer
$$s(\phi):=\min\{[\phi U: U\cap\phi U]:U\in \B(G)\}\,.$$
Moreover, a subgroup $U\in\B(G)$ is said to be \emph{minimizing} if the value $s(\phi)$ is attained at $U$, that is, $s(\phi)=[\phi U:U\cap \phi U]$, and $\nub(\phi):=\bigcap\{U\in\B(G): U\ \text{is minimizing}\}$ is a compact $\phi$-stable subgroup of $G$.
We see in Proposition \ref{scale} that 
\begin{equation}\label{s=h}\log s(\phi)=h_{top}(\bar\phi)\,,\end{equation}
where $\bar\phi:G/\nub(\phi)\to G/\nub(\phi)$ is the map induced by $\phi$. Moreover, we describe $\nub(\phi)$ in dynamical terms that depend only on $G$ and $\phi$, and not on the scale.
A consequence of Theorem \ref{AT} and of \eqref{s=h} is that $\log s(\phi)=h_{top}(\phi)$ if and only if $h_{top}(\phi\restriction_{\nub(\phi)})=0$, if and only if $\nub(\phi)=\{1\}$.

\subsection*{Conventions and notation}

All topological groups in this paper are Hausdorff.

\noindent
We denote by $\N$ and $\N_{>0}$ respectively the set of natural numbers and the set of positive integers. Analogously, $\R$ and $\R_{>0}$ stand respectively for the real numbers and the positive real numbers.


\noindent
For a group $G$ and an endomorphism $\phi:G\to G$, we say that a subgroup $H$ of $G$ is {$\phi$-stable} if $\phi H=H$ and {$\phi$-invariant} if $\phi H \leq H$.

\noindent
For a group $G$ and a subgroup $H$ of $G$, we denote by $G/H=\{xH:x\in G\}$ the set of all left cosets of $H$ in $G$, and by $[G:H]$ the index of $H$ in $G$, that is the size of $G/H$. If $K$ is another subgroup of $G$, then $KH/H$ is the family of all left cosets of $H$ in $G$ with representing elements in $K$, that is $KH/H=\{kH:k\in K\}$, and we denote by $[KH:H]$ the size of this family, generalizing the usual notion of index. 

\noindent
Moreover, $N_G(H)=\{x\in G: x^{-1}Hx=H\}$ is the normalizer of $H$ in $G$. We say that a subgroup $L$ of $G$ normalizes $H$ precisely when $L\leq N_G(H)$; equivalently, $x^{-1}Hx\subseteq H$ for every $x\in L$.

\noindent
For a topological group $G$, we denote by $\End(G)$ the semigroup of all the continuous endomorphisms of $G$.

\section{Background and preliminary results}\label{bg}

\subsection{Locally compact groups and their quotients}\label{tdlcq}

A topological group $G$ can be always endowed with a natural uniform structure $\U$, called the \emph{left uniformity} of $G$ (for every $g\in G$ the multiplication $x\mapsto gx$ is uniformly continuous with respect to $\U$), which generates the given topology of $G$. If $\B$ is a base for the neighborhoods of $1$ in $G$, the family 
$$\mathcal V:=\{U_K:K\in\B\}\, ,\text{ where }\ U_K:=\{(x, y) : y^{-1}x \in K\}\,,$$ is a fundamental system of entourages of $\U$. 

Similarly, if $H$ is a closed subgroup of $G$, then $G/H$ inherits from $G$ a natural uniform structure $\bar\U$, for which a fundamental system of entourages is given by  the family 
$$\bar{\mathcal V}=\{\bar U_K:K\in\B\}\,,\text{ where }\ \bar U_K:=\{(xH,yH):  y^{-1}x \in K \}\,. $$
The topology generated by $\bar\U$ on $G/H$ coincides with the quotient topology of $G/H$. Furthermore, $G$ acts on $G/H$ on the left, in the sense that, to each element $g\in G$, we can associate the following uniform automorphism of $G/H$:
$$\lambda_g:G/H\to G/H\,\ \ \text{such that}\ \ xH\mapsto gxH\,.$$
In fact, $\lambda_g\lambda_h=\lambda_{gh}$, for all $g,\, h\in G$ and the inverse of $\lambda_g$ is $\lambda_{g^{-1}}$.
For this reason we call $\bar\U$ the left uniformity of $G/H$.

\medskip
A topological group $G$ is locally compact precisely when the family
$$\c(G):=\{K:K\text{ compact neighborhood of $1$ in $G$}\}$$
is a base for the neighborhoods of $1$ in $G$. In this case, one can take $\B=\c(G)$ in the definition of the fundamental systems of entourages for the uniformities $\U$ and $\bar\U$ above. Moreover, if $G$ is locally compact and $H$ is a closed subgroup of $G$, then $G/H$ is a locally compact Hausdorff topological space (see, for example, the discussion in \cite[Section 3.1]{Reiter} or \cite[Theorems 5.21, 5.22]{HR}). 
If in addition $G$ is totally disconnected, the quotient $G/H$ is $0$-dimensional (see \cite[Theorem 7.11]{HR}).

\medskip
Given a locally compact group $G$, it is known that there exists a left Haar measure $\mu$ on $G$. 
For a compact subgroup $C$ of $G$ and a relatively open subgroup $K$ of $C$, we can write $C=\dot\bigcup_{cK\in C/K}cK$. By the compactness of $C$, and since each $cK$ is open in $C$, the index $[C:K]$ is finite; so, since $\mu$ is left invariant,
\begin{equation}\label{meas_quot_eq1}
\mu(C)=[C:K]\mu(K)\,.
\end{equation}

Choose now a closed subgroup $H$ of $G$. In analogy to the left invariance of $\mu$, a measure $\bar\mu$ on $G/H$ is said to be {\em left invariant} if, for any measurable subset $C$ of $G/H$, 
$$\bar \mu(\lambda_gC)=\bar \mu(C)\ \ \text{for all $g\in G$\,.}$$
We would like to find a left invariant measure $\bar \mu$ on $G/H$, which is finite on the compact subsets of $G/H$ and  such that there exists a compact subset $K_0$ of $G/H$ with $\bar\mu(K_0)>0$. Unfortunately, such a measure does not always exist. In fact, a necessary and sufficient condition for its existence is that the restriction to $H$ of the modular function $\Delta_G$ of $G$ coincides with the modular function $\Delta_H$ of $H$ (see \cite[Corollary 3 on p.140]{Leopoldo} or \cite[Section 8.1]{Reiter}). On the other hand, if $H$ is compact, then both $\Delta_H(H)$ and $\Delta_G(H)$ are compact (multiplicative) subgroups of $\R_{>0}$, hence, $\Delta_H(H)=\Delta_G(H)=\{1\}$. We obtain the following

\begin{lemma}\label{Haar_sul_quot}
Let $G$ be a locally compact group and $H$ a compact subgroup of $G$. Then there exists a left invariant measure $\bar \mu$ on $G/H$, which is finite on the compact subsets of $G/H$ and  such that there exists a compact subset $K_0$ of $G/H$ with $\bar\mu(K_0)>0$.
\end{lemma}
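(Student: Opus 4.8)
The plan is to construct $\bar\mu$ explicitly by averaging, i.e.\ by pushing forward a left Haar measure $\mu$ on $G$ along the quotient map $\pi\colon G\to G/H$ after integrating out the compact fibre $H$. Concretely, for $f\in C_c(G)$ define $(Tf)(xH):=\int_H f(xh)\,d\nu(h)$, where $\nu$ is the normalized Haar measure on the compact group $H$; this is well defined on $G/H$, lies in $C_c(G/H)$, and by the Weil formula (or a direct check using left invariance of $\mu$) the functional $\bar\mu(Tf):=\int_G f\,d\mu$ descends to a well-defined positive linear functional on $C_c(G/H)$. Since $G/H$ is locally compact Hausdorff, the Riesz representation theorem then yields a Radon measure $\bar\mu$ on $G/H$, automatically finite on compact sets.

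Next I would verify the two required properties. Left invariance: for $g\in G$ one has $(Tf)(gxH)=(T(f\circ\ell_g))(xH)$ where $\ell_g$ is left translation by $g^{-1}$ on $G$, and since $\mu$ is left invariant, $\int_G f\circ\ell_g\,d\mu=\int_G f\,d\mu$; tracing this through the definition gives $\bar\mu(\lambda_g C)=\bar\mu(C)$ for all $g$, first for $C$ in a dense subalgebra of functions and then for all Borel sets by regularity. Nontriviality on compacta: pick any $f\in C_c(G)$ with $f\geq 0$ and $\int_G f\,d\mu>0$; then $Tf\geq 0$ and $\int_{G/H}Tf\,d\bar\mu=\int_G f\,d\mu>0$, so $Tf$ cannot vanish identically, whence the compact set $K_0:=\operatorname{supp}(Tf)\subseteq\pi(\operatorname{supp} f)$ has $\bar\mu(K_0)>0$.

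The only genuine subtlety — and the reason the hypothesis that $H$ is compact is used — is the claim that the functional $f\mapsto\int_G f\,d\mu$ factors through $T$, i.e.\ that $Tf=0$ forces $\int_G f\,d\mu=0$. This is exactly where one invokes the criterion already recalled in the text: a relatively invariant measure on $G/H$ exists precisely when $\Delta_G\restriction_H=\Delta_H$, and for compact $H$ both modular functions are trivial on $H$ because $\Delta_H(H)$ and $\Delta_G(H)$ are compact subgroups of $\R_{>0}$, hence equal to $\{1\}$. So in fact I would not reprove this from scratch: I would cite \cite[Corollary 3 on p.140]{Leopoldo} (or \cite[Section 8.1]{Reiter}) together with the observation about modular functions — which is the content of the paragraph immediately preceding the lemma — to get existence of the left invariant $\bar\mu$ that is finite on compacta, and then the nontriviality on some compact $K_0$ is the short averaging argument above. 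The main obstacle, then, is purely bookkeeping: making sure the cited construction delivers a measure that is simultaneously left invariant, locally finite, and not the zero measure, rather than any deeper analytic difficulty.
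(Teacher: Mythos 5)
Your proposal is correct and follows essentially the same route as the paper: the paper's proof is precisely the paragraph preceding the lemma, namely citing the criterion of \cite{Leopoldo} (or \cite{Reiter}) that such a measure on $G/H$ exists iff $\Delta_G\restriction_H=\Delta_H$, and observing that for compact $H$ both $\Delta_H(H)$ and $\Delta_G(H)$ are compact subgroups of $\R_{>0}$, hence trivial. Your additional Weil-formula averaging argument is a sound (and slightly more self-contained) elaboration of what the cited construction delivers, including the nontriviality on a compact set, but it is not a different method.
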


In the hypotheses of the above lemma, let $\pi:G\to G/H$ be the canonical projection.
Analogously to the discussion that leads to \eqref{meas_quot_eq1}, if $C$ is a compact subgroup of $G$ and $K$ is a relatively open subgroup of $C$ containing $H$, then $\pi C=\dot\bigcup_{xK\in C/K}\pi(xK)=\dot\bigcup_{xK\in C/K}\lambda_x(\pi K)$ in $G/H$. Thus, we still have the formula
\begin{equation}\label{meas_quot_eq}\bar \mu(\pi C)=[C:K]\bar \mu(\pi K)\,.\end{equation}

\subsection{T.d.l.c. groups}

If $G$ is a \tdlc group, then, as observed in the Introduction, the subfamily 
$$\B(G)=\{U\leq G:\text{$U$ compact and open}\}$$ 
of $\c(G)$ is a base for the neighborhoods of $1$ in $G$. 
\\ For $C$ a closed subgroup of $G$, let 
$$\B(G,C):=\{U\in\B(G):C\leq U\}\,.$$

The following results will be useful many times in what follows.

\begin{lemma}\label{magic}
Let $G$ be a \tdlc group, $C$ a compact subgroup of $G$ and $K\in \B(G)$. Then there exists $L\in\B(G)$ such that $L\leq K$ and $C\leq N_G(L)$. In particular, $CL\in\B(G)$.
\end{lemma}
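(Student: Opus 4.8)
The plan is to realize $L$ as an intersection of the $C$-conjugates of $K$, using the compactness of $C$ to guarantee that this intersection is still open. Concretely, I would use that $C$ acts on $G$ by conjugation via the continuous map $\beta\colon C\times G\to G$, $\beta(c,g)=cgc^{-1}$. Since $\beta(c,1)=1\in K$ for all $c$, the set $\beta^{-1}(K)$ is open in $C\times G$ and contains the compact slice $C\times\{1\}$.

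First I would apply the tube lemma (this is where compactness of $C$ enters): there is an open neighbourhood $O$ of $1$ in $G$ with $C\times O\subseteq\beta^{-1}(K)$, i.e. $cOc^{-1}\subseteq K$ for every $c\in C$. By van Dantzig's theorem I then pick $W\in\B(G)$ with $W\leq O$, so that $cWc^{-1}\subseteq K$ for all $c\in C$. Now I set $L:=\bigcap_{c\in C}cKc^{-1}$. Routine checks show that $L$ is a subgroup (intersection of subgroups), that it is closed (intersection of compact, hence closed, subsets of the Hausdorff group $G$), and that $L\leq K$ (take $c=1$), whence $L$ is a closed subgroup of the compact group $K$ and therefore compact. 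To see that $L$ is open --- the one non-formal point --- I would observe that $cWc^{-1}\subseteq K$ gives $W\subseteq c^{-1}Kc$ for every $c\in C$, so $W\subseteq\bigcap_{c\in C}c^{-1}Kc=L$ (the reindexing $c\mapsto c^{-1}$ being valid since $C$ is a group); thus $L$ contains a neighbourhood of $1$ and is open, so $L\in\B(G)$ and $L\leq K$. Finally $C$ normalizes $L$: for $d\in C$ one has $dLd^{-1}=\bigcap_{c\in C}(dc)K(dc)^{-1}=\bigcap_{e\in C}eKe^{-1}=L$ because $dC=C$, i.e. $C\leq N_G(L)$.

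For the last assertion: since $C\leq N_G(L)$, the set $CL$ is a subgroup of $G$; it is compact, being the continuous image of the compact set $C\times L$ under multiplication; and it is open, being the union $\bigcup_{c\in C}cL$ of cosets of the open subgroup $L$. Hence $CL\in\B(G)$. The main obstacle is precisely the openness of $L$: an arbitrary intersection $\bigcap_{c\in C}cKc^{-1}$ of compact open subgroups need not be open, and it is only the compactness of $C$, exploited through the tube lemma, that forces an entire $W\in\B(G)$ into every conjugate $c^{-1}Kc$ and thus into $L$. Everything else is bookkeeping.
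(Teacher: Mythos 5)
Your proof is correct and takes essentially the same approach as the paper: both define $L$ as the intersection of all $C$-conjugates of $K$, observe that it is a compact subgroup of $K$ normalized by $C$, and use the compactness of $C$ to force an entire compact open subgroup into every conjugate, which gives openness of $L$. The only difference is cosmetic: where you invoke the tube lemma for the continuous conjugation map $\beta\colon C\times G\to G$, the paper carries out the same finite-subcover argument by hand (covering $C$ by sets $W_x=U_xx$ on which conjugation into $K$ is controlled), so your version is a slightly cleaner packaging of an identical idea.
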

\begin{proof}
Let $L=\bigcap \{x^{-1}Kx:x\in C\}$. It is clear that $L\leq K$ and that $L$ (being defined as an intersection of closed subgroups) is a closed subgroup of $K$, so it is compact. Let us show that $C$ normalizes $L$, that is, $y^{-1}Ly\leq L$ for all $y\in C$. Let $y\in C$ and $n\in L$. For $x\in C$, we have $x y^{-1}nyx^{-1}=(yx^{-1})^{-1}n(yx^{-1})\in K$. 
Therefore, $y^{-1}ny\in x^{-1}Kx$ for every $x\in C$, and hence $y^{-1}ny\in L$.
\\
It remains to show that $L$ is open. Indeed, given $x\in C$, choose $U\in \B(G)$ such that $U\leq K$ and $x^{-1}Ux\leq K$. Let $W_x=Ux$ and $V_x=U$. Thus,  $W^{-1}_xV_xW_x=x^{-1}UUUx=x^{-1}Ux\leq K$. The family $\{W_x:x\in C\}$ is an open cover $C$, which is compact, so there is a finite subset $F$ of $C$ such that $C\subseteq \bigcup\{W_x:x\in F\}$. Set $V=\bigcap\{V_x:x\in F\}$. Then $x^{-1}Vx\leq K$ for each element $x\in C$; in fact, given $x\in C$, there exists $f\in F$ such that $x\in W_f$, so that $x^{-1}Vx\subseteq W_f^{-1}V_fW_f\subseteq K$. Thus, $V\leq L$, showing that $L$ is open.
\end{proof}

\begin{corollary}\label{lastcor}
Let $G$ be a \tdlc group and $C$ a compact subgroup of $G$. Then:
\begin{enumerate}[\rm (1)]
\item $\B=\{U\in \mathcal B(G): C\leq N_G(U)\}$ is a base for the neighborhoods of $1$ in $G$.
\item $\B'=\{CU: U\in\B(G),\ C\leq N_G(U)\}$ (and so also $\B(G,C)$) is a base for the neighborhoods of $C$ in $G$.
\end{enumerate}
\end{corollary}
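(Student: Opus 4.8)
The plan is to deduce both statements from Lemma \ref{magic} and van Dantzig's theorem, with no essentially new input needed.

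For part (1), I would take an arbitrary neighborhood $W$ of $1$ in $G$. By van Dantzig's theorem there is $K\in\B(G)$ with $K\subseteq W$, and then Lemma \ref{magic} applied to $C$ and $K$ yields $L\in\B(G)$ with $L\leq K\subseteq W$ and $C\leq N_G(L)$, so that $L\in\B$. Since conversely every member of $\B$ is a compact open subgroup of $G$ and hence a neighborhood of $1$, this shows that $\B$ is a base for the neighborhoods of $1$.

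For part (2), I would first recall the standard fact that, $C$ being compact, every neighborhood of $C$ in $G$ contains a set of the form $CW$ with $W$ a neighborhood of $1$: indeed, given an open set $O\supseteq C$, for each $c\in C$ pick an open neighborhood $W_c$ of $1$ with $cW_cW_c\subseteq O$, extract a finite subcover $C\subseteq\bigcup_{i=1}^n c_iW_{c_i}$, and set $W=\bigcap_{i=1}^n W_{c_i}$; then $CW\subseteq O$. Now apply part (1) to get $U\in\B(G)$ with $U\subseteq W$ and $C\leq N_G(U)$. By the ``in particular'' clause of Lemma \ref{magic} we have $CU\in\B(G)$, hence $CU\in\B'$, and $C\leq CU\subseteq CW\subseteq O$; so $\B'$ is a base for the neighborhoods of $C$. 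Finally, $\B'\subseteq\B(G,C)$ and every member of $\B(G,C)$ is an open subgroup of $G$ containing $C$, hence a neighborhood of $C$; therefore $\B(G,C)$ is a base for the neighborhoods of $C$ as well.

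The whole argument is essentially bookkeeping once Lemma \ref{magic} is in hand. The only point that is not purely formal is the reduction of an arbitrary neighborhood of the compact set $C$ to one of the form $CW$, but this is a routine fact about topological groups, so I do not anticipate any genuine obstacle.
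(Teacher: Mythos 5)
Your proposal is correct and follows essentially the same route as the paper: part (1) is Lemma \ref{magic} plus van Dantzig, and part (2) uses compactness of $C$ to shrink an arbitrary neighborhood of $C$ to one of the form $CW$ and then invokes part (1) and the normalization to conclude $CU\in\B(G)$. The only cosmetic difference is that the paper covers $C$ by translates of compact open subgroups $V_c$ (so $V_cV_c=V_c$ and no doubling step is needed), whereas you use general neighborhoods $W_c$ with $cW_cW_c\subseteq O$; both are fine.
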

\begin{proof}
(1) follows directly from Lemma \ref{magic}.

\smallskip\noindent
(2) Let $A$ be an open subset of $G$ containing $C$. In particular, $A$ is an open neighborhood of every element of $C$, and so for every $c\in C$, there exists $V_c\in \mathcal B(G)$ such that $cV_c\subseteq A$. Since $C\subseteq \bigcup_{c\in C}cV_c$, by the compactness of $C$ there exists a finite subset $F$ of $C$ such that $C\subseteq \bigcup_{f\in F}fV_f$. Let $K=\bigcap_{f\in F}V_f\in \B(G)$; we claim that $CK\subseteq A$. Indeed, for all $c\in C$ there exists $f\in F$ such that $c\in fV_f$, hence $cK\subseteq fV_fK=fV_f\subseteq A$. By Lemma \ref{magic} there exists $L\in \B(G)$ such that $L\leq K$ and $CL\in\B(G)$. Clearly, $CL\subseteq A$ and $CL\in\B'\subseteq\B(G,C)$.
\end{proof}

The next lemma generalizes part (2) of Corollary \ref{lastcor}.

\begin{lemma}\label{Mbase}
Let $G$ be a \tdlc group and $C$ a compact subgroup of $G$. If $\B\subseteq \B(G,C)$ is a downward directed family with respect to inclusion such that $\bigcap \B=C$, then $\B$ is a base for the neighborhoods of $C$ in $G$.
\end{lemma}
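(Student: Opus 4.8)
The statement to establish is that every open neighbourhood $A$ of $C$ in $G$ contains some member of $\B$; since each $U\in\B\subseteq\B(G,C)$ is itself open and contains $C$, this is exactly what it means for $\B$ to be a base for the neighbourhoods of $C$. So the whole content is the containment claim, and the plan is a standard compactness argument resting on the two structural features of $\B$: downward directedness and $\bigcap\B=C$.

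First I would fix once and for all some $U_0\in\B$ and pass to the subfamily $\B_0=\{U\in\B:U\subseteq U_0\}$. By downward directedness $\B_0$ is cofinal in $\B$ (given $V\in\B$, pick $W\in\B$ with $W\subseteq V\cap U_0$, so $W\in\B_0$ and $W\subseteq V$); hence $\B_0$ is again downward directed and, since the intersection of a family is unchanged upon passing to a cofinal subfamily, $\bigcap\B_0=\bigcap\B=C$. This move is what lets the rest of the argument take place inside the single compact group $U_0$. For each $U\in\B_0$ put $F_U:=U\setminus A$; as $U$ is compact (hence closed) and $A$ is open, $F_U$ is a closed, therefore compact, subset of $U_0$. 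The family $\{F_U:U\in\B_0\}$ is downward directed as well: given $U_1,U_2\in\B_0$, choose $U_3\in\B_0$ with $U_3\subseteq U_1\cap U_2$, so that $F_{U_3}\subseteq F_{U_1}\cap F_{U_2}$.

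Now comes the dichotomy. Either some $F_U$ is empty, in which case $U\subseteq A$ and we are finished; or every $F_U$ is nonempty. In the latter case $\{F_U:U\in\B_0\}$ is a family of nonempty closed subsets of the compact space $U_0$ enjoying the finite intersection property — any finitely many of its members contain a common member $F_{U^*}$ by iterating downward directedness — so $\bigcap_{U\in\B_0}F_U\neq\emptyset$. But $\bigcap_{U\in\B_0}F_U=\bigl(\bigcap_{U\in\B_0}U\bigr)\setminus A=C\setminus A=\emptyset$, since $C\subseteq A$; contradiction. Hence the first alternative must occur, producing the desired $U\in\B$ with $U\subseteq A$.

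I do not anticipate a genuine obstacle: the lemma is essentially Corollary \ref{lastcor}(2) with the explicit neighbourhood base $\B'$ replaced by an arbitrary downward directed family having intersection $C$, and the only points requiring a modicum of care are (i) that restricting to the cofinal subfamily $\B_0$ preserves the equality of the intersection with $C$, which confines everything to one compact group, and (ii) invoking the finite intersection property correctly for a \emph{directed} rather than arbitrary family of compact sets. Both are routine. One small hygiene point is that $\B\neq\emptyset$ is implicit in the hypothesis $\bigcap\B=C$ (otherwise the intersection would be the whole ambient set), which is all that is needed to choose $U_0$.
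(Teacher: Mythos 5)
Your proof is correct. It is the same underlying compactness-plus-directedness argument as the paper's, but organized dually: you fix $U_0\in\B$, pass to the cofinal subfamily $\B_0$ of members contained in $U_0$, and apply the finite intersection property to the downward directed family of compact sets $U\setminus A$, concluding that some $U\setminus A$ must be empty since $\bigcap_{U\in\B_0}(U\setminus A)=C\setminus A=\emptyset$. The paper instead first reduces, via Corollary~\ref{lastcor}(2), to the case where the given neighbourhood $N$ lies in $\B(G,C)$ (so is compact and open), manufactures a compact ambient set $V$ as a finite union of translates $xU$ covering $N$, and then extracts a finite subcover from $V\setminus N\subseteq\bigcup_{W}(V\setminus W)$ before invoking directedness to find $W\subseteq N$. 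Your version buys a small simplification: it needs neither the preliminary reduction to a compact open $N$ nor the auxiliary compact set $V$ built from translates, since a single $U_0\in\B$ already serves as the compact container; the only points requiring care, which you address, are that $\bigcap\B_0=\bigcap\B=C$ upon passing to the cofinal subfamily and that the finite intersection property is applied to a directed family of nonempty compact sets.
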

\begin{proof}
We should verify that, given a neighborhood $N$ of $C$ in $G$, there is $W\in\B$ such that $W\leq N$. Since $\B(G,C)$ is a base for the neighborhoods of $C$ in $G$ by Corollary \ref{lastcor}(2), we can suppose that $N\in\B(G,C)$. Let $U\in \B$, let $\B_U=\{W\in \B:W\leq U\}$, and consider the open cover $N\subseteq \bigcup_{x\in N}xU$. Since $N$ is compact, there is a finite subset $1\in F\subseteq N$ such that $N\subseteq \bigcup_{x\in F}xU=V$, where $V$ is compact. Then, $V\setminus N$ is compact and 
$$V\setminus N\subseteq V\setminus C=V\setminus \bigcap_{W\in \B_U}W= \bigcup_{W\in \B_U}(V\setminus W)\,.$$
By compactness of $V\setminus N$, there exist $W_1,\dots,W_n\in \B_U$ such that $V\setminus N\subseteq \bigcup_{i=1}^n(V\setminus W_i)$. Since $\B$ is downward directed, let $W$ be any element of $\B$ which is contained in $\bigcap_{i=1}^nW_i$ and notice that $W\subseteq N$. 
\end{proof}

The next corollary describes $\B(H)$ for any closed subgroup $H$ of a \tdlc group $G$, and it gives suitable subbases of $\B(G/H)$ respectively when $H$ is normal and when $H$ is compact.

\begin{corollary}\label{base_sub}
Let $G$ be a \tdlc group and $H$ a closed subgroup of $G$. Then:
\begin{enumerate}[\rm (1)]
\item $\B(H)=\{U\cap H:U\in \B(G)\}$;
\item if $H$ is normal and $\pi:G\to G/H$ is the canonical projection,  then $\pi\B(G)=\{\pi U:U\in \B(G)\}\subseteq \B(G/H)$ is a base for the neighborhoods of $H$ in $G/H$;
\item $H$ is compact if and only if $\B(G,H)$ is a base for the neighborhoods of $H$ in $G$.
\end{enumerate}
\end{corollary}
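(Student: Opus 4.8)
The plan is to treat the three items in turn, leaning on the lemmas already established. For (1), the inclusion $\supseteq$ is immediate: if $U\in\B(G)$ then $U\cap H$ is a subgroup of $H$, it is compact (closed in the compact $U$), and it is open in $H$ (being the trace on $H$ of an open set). For $\subseteq$, given $V\in\B(H)$, I would use that $V$ is open in $H$ to find an open set $A\subseteq G$ with $A\cap H=V$, then shrink: pick $W\in\B(G)$ with $1\in W\subseteq A$; this gives $W\cap H\subseteq V$, but we want equality with some element of $\{U\cap H:U\in\B(G)\}$. The cleanest fix is to observe $V$ is compact, hence closed in $G$, so $V\cdot(W\cap H)=V$ and more directly: since $V$ is a compact open subgroup of $H$ and $H$ carries the subspace topology, $V$ is the trace of a compact open subgroup of $G$ — concretely, take any $U_0\in\B(G)$ with $U_0\cap H\subseteq V$ and set $U=U_0(V)$... actually the slick argument is that $V$ open in $H$ means $V=B\cap H$ for some open $B\subseteq G$, and $V$ compact plus van Dantzig applied in $G$ lets us find $U\in\B(G)$ with $V\subseteq B$, $U\cap H\subseteq V$, whence $U':=UV$ — but $V$ need not normalize $U$. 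I would instead argue: $V\in\B(H)$, and by local compactness of $G$ there is $U\in\B(G)$ with $U\cap H\subseteq V$; then $(U\cap H)V=V$ and $\langle U,V\rangle$ need not help. The honest route: since the subspace topology on $H$ is generated by traces of $\B(G)$ and $V$ is both open and closed (compact) in $H$, standard \nbd{}-base arguments give $U\in\B(G)$ with $U\cap H=V$ by a compactness-gluing over the finitely many cosets of $U\cap H$ in $V$, exactly as in the proof of Lemma \ref{Mbase}.

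For (2), assume $H$ normal. Each $\pi U$ for $U\in\B(G)$ is a subgroup of $G/H$, it is open (as $\pi$ is open), and it is compact (continuous image of the compact $U$); hence $\pi\B(G)\subseteq\B(G/H)$. To see it is a base for the neighborhoods of $H=\pi(1)$ in $G/H$: a basic neighborhood of $H$ in the quotient topology is $\pi(A)$ for $A$ an open neighborhood of $1$ in $G$ (using normality, or just $\pi^{-1}\pi(A)\supseteq A$ open), and choosing $U\in\B(G)$ with $U\subseteq A$ gives $\pi U\subseteq\pi A$. I expect this to be routine.

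For (3), the forward implication is Corollary \ref{lastcor}(2): if $H$ is compact then $\B(G,H)$ is a base for the neighborhoods of $H$ in $G$. For the converse, suppose $\B(G,H)$ is such a base; in particular $\B(G,H)\neq\emptyset$, so there is $U\in\B(G)$ with $H\leq U$, and then $H$, being closed in the compact group $U$, is compact. \textbf{This last step is the main obstacle only in appearance} — the real content is realizing that $\B(G,H)$ being nonempty already forces compactness of $H$, so the equivalence is almost tautological once one unwinds the definition; the substantive direction is the forward one, which is exactly Corollary \ref{lastcor}(2), and the genuinely delicate point across the whole corollary is the gluing argument in (1) producing a $U\in\B(G)$ with $U\cap H$ equal to (not merely contained in) the prescribed $V\in\B(H)$, which one runs as a finite compactness argument over cosets mimicking Lemma \ref{Mbase}.
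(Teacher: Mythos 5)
Your items (2) and (3) are correct and essentially the paper's own arguments: (2) is the routine observation that $\pi U$ is a compact open subgroup of $G/H$ and that pulling back an open neighborhood of $H$ and shrinking inside $\B(G)$ does the job, and (3) combines Corollary \ref{lastcor}(2) with the remark that a nonempty $\B(G,H)$ already forces $H$ to be closed in some compact $U$.

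The gap is in item (1), where the inclusion $\B(H)\subseteq\{U\cap H:U\in\B(G)\}$ is never actually proved. The paper's argument is short: given $V\in\B(H)$, write $V=U'\cap H$ with $U'$ open in $G$; since $V$ is a \emph{compact subgroup of $G$}, Corollary \ref{lastcor}(2) yields $U\in\B(G)$ with $V\subseteq U\subseteq U'$, and then $U\cap H\subseteq U'\cap H=V\subseteq U\cap H$, so $U\cap H=V$. You came within reach of this: you observed that the naive choice $U=VW$ for a small $W\in\B(G)$ fails because ``$V$ need not normalize $W$,'' but you then abandoned that route instead of invoking Lemma \ref{magic}, which exists precisely to remove this obstruction --- it replaces $W$ by some $L\leq W$ normalized by $V$, so that $VL\in\B(G)$; that is exactly what Corollary \ref{lastcor}(2) packages. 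Your fallback, a ``compactness-gluing over the finitely many cosets of $U\cap H$ in $V$, exactly as in the proof of Lemma \ref{Mbase},'' does not work as stated: Lemma \ref{Mbase} proves that a downward directed family of compact open subgroups with intersection $C$ is a neighborhood base at $C$, which is a different statement, and a finite union of cosets $\bigcup_i v_iU_0$ with $U_0\in\B(G)$ small is in general not a subgroup, so the gluing does not produce an element of $\B(G)$ whose trace on $H$ equals $V$. So (1) as written is incomplete, although every ingredient needed to close it (Lemma \ref{magic} and Corollary \ref{lastcor}(2)) was already available to you.
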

\begin{proof}
(1) The inclusion $\{U\cap H:U\in \B(G)\}\subseteq \B(H)$ is clear. On the other hand, given $V\in\B(H)$, there exists an open subset $U'\subseteq G$ such that $U'\cap H=V$. Now, $V$ is a compact subgroup of $G$ so, by Corollary \ref{lastcor}(2), there exists $U\in \B(G)$ such that $V\subseteq U\subseteq U'$. Clearly, $U\cap H=V$.

\smallskip\noindent
(2)  Let $V\in\B(G/H)$. Then $\pi^{-1}V$ is an open subgroup of $G$, thus there is $U\in \B(G)$ such that $U\leq \pi^{-1}V$. It is now clear that, $\pi U\leq V$ and $\pi U\in \B(G/H)$.

\smallskip\noindent
(3) If $H$ is compact, then $\B(G,H)$ is a base for the neighborhoods of $H$ in $G$ by Corollary \ref{lastcor}(2). Conversely, if $\B(G,H)$ is a base for the neighborhoods of $H$ in $G$, in particular it is not empty, and we can take some $U\in\B(G,H)$; then $H$ is closed in the compact $U$ and so $H$ is compact.
\end{proof}

\subsection{Indices of subgroups}

As underlined in the previous subsection, the study of Haar measure of subgroups reduces to some extent to the study of indices of subgroups. In this subsection we collect some facts about indices of subgroups of an abstract group.

\begin{lemma}\label{gi}
Let $G$ be a group and $H$, $K$, $L$ subgroups of $G$ with $H\leq K$. Then:
\begin{enumerate}[\rm (1)]
\item $[G:H]=[G:K][K:H]$;
\item $[LH:H]=[L:H\cap L]$;
\item $[K:H]\geq[K\cap L:H\cap L]$;
\item $[K:H]\geq[KL:HL]$, provided $HL=LH$;
\end{enumerate}
Let $G'$ be a group, $H'$, $K'$ subgroups of $G'$ with $H'\leq K'$, and consider a homomorphism $\phi:G'\to G$. Then:
\begin{enumerate}[\ \ \, ]
\item[\rm (5)] $[\phi^{-1} K : \phi^{-1}H] = [K\cap \Im(\phi): H\cap \Im(\phi)]$, in particular $[\phi^{-1} K : \phi^{-1}H] \leq  [K: H]$;
\item[\rm (6)] $[K'\ker(\phi): H'\ker(\phi)]= [\phi K' : \phi H']$, in particular $[K': H']\geq [\phi K' : \phi H']$.
\end{enumerate}
\end{lemma}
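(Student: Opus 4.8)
The statement to prove is Lemma 2.9 (labeled \texttt{gi}), collecting six index identities and inequalities for subgroups of an abstract group. Here is how I would organize the proof.

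\medskip

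The plan is to prove the six parts essentially in the order stated, since the later parts rely on the earlier ones and on the standard correspondence between cosets. For (1), I would use the transitivity of indices in the form of a bijection $G/H \to (G/K) \times (K/H)$ built from choosing coset representatives; this is the classical tower law, and it holds even when the indices are infinite with the convention that the product is infinite whenever either factor is. For (2), the second isomorphism setup gives a bijection $L/(H\cap L) \to LH/H$ sending $l(H\cap L)\mapsto lH$; one checks it is well-defined and injective because $lH = l'H$ with $l,l'\in L$ forces $l^{-1}l'\in H\cap L$, and surjective by definition of $LH/H=\{lH:l\in L\}$. For (3), I would apply (2) twice: $[K\cap L:H\cap L]=[(K\cap L)H:H]$ wait—rather, observe $H\cap L = H\cap(K\cap L)$ since $H\le K$, so by (2) with the ambient computation inside $K$, $[K\cap L:H\cap L] = [(K\cap L)H : H]$ as a subfamily of $K/H$, hence $\le [K:H]$. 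For (4), assuming $HL=LH$ so that $HL$ and $KL$ are subgroups, the natural surjection $K/H \twoheadrightarrow KL/HL$, $kH\mapsto kHL$, is well-defined (if $kH=k'H$ then $kHL=k'HL$) and surjective because any element of $KL$ has the form $k\ell$ with $k\in K,\ell\in L$ and $k\ell HL = kHL$; surjectivity of a map gives $|K/H|\ge |KL/HL|$.

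\medskip

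For (5) and (6), the key is to analyze how $\phi$ interacts with cosets. For (6), the canonical isomorphism theorem gives that $\phi$ induces a bijection $G'/\ker(\phi)\to \Im(\phi)$, and more precisely, for subgroups $H'\le K'$ of $G'$, the map $K'\ker(\phi)/H'\ker(\phi) \to \phi K'/\phi H'$, $k'H'\ker(\phi)\mapsto \phi(k')\phi H'$, is a well-defined bijection: it is well-defined and injective precisely because $\phi(k')\phi H' = \phi(k'')\phi H'$ iff $\phi(k'^{-1}k'')\in \phi H'$ iff $k'^{-1}k''\in H'\ker(\phi)$, and surjective because $\phi K' = \{\phi(k'):k'\in K'\}$. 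This yields $[K'\ker(\phi):H'\ker(\phi)] = [\phi K':\phi H']$, and then $[K':H']\ge[K'\ker(\phi):H'\ker(\phi)]$ follows from (4) applied with $L=\ker(\phi)$ (noting $\ker(\phi)$ is normal, so $H'\ker(\phi)=\ker(\phi)H'$), giving the ``in particular'' clause. For (5), given $H\le K$ in $G$ and $\phi\colon G'\to G$, I would use the preimage formula: $\phi^{-1}H\le \phi^{-1}K$ and the map $\phi^{-1}K/\phi^{-1}H \to (K\cap\Im\phi)/(H\cap\Im\phi)$, $g'\phi^{-1}H\mapsto \phi(g')(H\cap\Im\phi)$, is a bijection (well-definedness and injectivity: $\phi(g')(H\cap\Im\phi)=\phi(g'')(H\cap\Im\phi)$ iff $\phi(g'^{-1}g'')\in H$ iff $g'^{-1}g''\in\phi^{-1}H$; surjectivity: every element of $K\cap\Im\phi$ is $\phi(g')$ for some $g'$, which then lies in $\phi^{-1}K$). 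Combining with (3) applied in $G$ with $L=\Im(\phi)$ gives $[K\cap\Im\phi:H\cap\Im\phi]\le[K:H]$, which is the ``in particular'' part.

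\medskip

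The routine obstacle throughout is simply bookkeeping: checking well-definedness and the correct direction of each inequality when indices may be infinite, and making sure the relevant products ($LH$, $KL$, $H'\ker\phi$, etc.) are genuinely subgroups before speaking of their generalized indices $[{-}:{-}]$ in the sense defined in the Conventions (where $[KH:H]$ denotes $|\{kH:k\in K\}|$ even without $KH$ being a subgroup). In fact this last observation slightly streamlines things: in (2), (3), and (4) one need not assume the relevant product is a subgroup at all, since $[LH:H]$ is defined as the cardinality of $\{lH:l\in L\}$; the surjection/bijection arguments above go through verbatim at the level of these coset families. I expect no genuine difficulty—each part is a one-line bijection or surjection—but I would present (1) and (2) carefully as the foundation, then derive (3) and (4) from (2), and finally (5) and (6) from the first isomorphism theorem together with (3) and (4) respectively.
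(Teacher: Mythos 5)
The paper states this lemma without proof, treating all six parts as standard facts about indices, so there is no argument in the paper to compare against. Your proof is correct and complete: the coset bijections/surjections you exhibit for (2), (4), (5), (6) are the canonical ones (in (6) note every coset of $H'\ker(\phi)$ in $K'\ker(\phi)$ has a representative in $K'$ because $\ker(\phi)\leq H'\ker(\phi)$, which you implicitly use), the reductions of (3) to (2) and of the ``in particular'' clauses to (3) and (4) are exactly right, and your observation that the paper's generalized index $[KH:H]=|\{kH:k\in K\}|$ makes the subgroup hypotheses on the products unnecessary in (2)--(3) is consistent with the stated conventions.
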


As a consequence of the above lemma we obtain: 

\begin{corollary}\label{im_gi}
Let $G$ be a group, $\phi\in\End(G)$ and $H\leq K$ subgroups of $G$ such that $H\leq \phi H$ and $K\leq \phi K$. If $[\phi K:\phi H]<\infty$, then $[\phi H:H]\geq [\phi K:K]$.
\end{corollary}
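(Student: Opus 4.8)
The plan is to run a short index-arithmetic argument built on the two chains of subgroups that the hypotheses provide, using only parts (1) and (6) of Lemma~\ref{gi}. First I would record the inclusions: since $H\leq K$ we have $\phi H\leq \phi K$, and combining this with $H\leq\phi H$ and $K\leq\phi K$ gives the two towers
\[
H\ \leq\ \phi H\ \leq\ \phi K\qquad\text{and}\qquad H\ \leq\ K\ \leq\ \phi K\,.
\]
Applying the tower law (Lemma~\ref{gi}(1)) to each of these and equating the two expressions for $[\phi K:H]$ yields the key identity
\begin{equation*}
[\phi K:\phi H]\cdot[\phi H:H]\ =\ [\phi K:K]\cdot[K:H]\,.
\end{equation*}

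Next I would invoke Lemma~\ref{gi}(6) with the homomorphism $\phi$ and the pair $H\leq K$ to obtain $[K:H]\geq[\phi K:\phi H]$. Now the finiteness hypothesis $[\phi K:\phi H]<\infty$ lets me divide in the displayed identity: if $[\phi H:H]=\infty$ the desired inequality $[\phi H:H]\geq[\phi K:K]$ is trivial, and otherwise both $[\phi K:K]$ and $[K:H]$ are finite, so
\[
[\phi H:H]\ =\ [\phi K:K]\cdot\frac{[K:H]}{[\phi K:\phi H]}\ \geq\ [\phi K:K]\,,
\]
the last step because $[K:H]/[\phi K:\phi H]\geq 1$.

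There is essentially no obstacle here beyond bookkeeping with cardinal arithmetic of indices; the only point requiring a small amount of care is to make sure the division step is legitimate, which is why the case distinction on whether $[\phi H:H]$ is finite is included. All of this is self-contained once Lemma~\ref{gi} is available.
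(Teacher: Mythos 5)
Your proof is correct and follows essentially the same route as the paper: both arguments compute $[\phi K:H]$ in two ways via the towers $H\leq\phi H\leq\phi K$ and $H\leq K\leq\phi K$ using Lemma~\ref{gi}(1), apply Lemma~\ref{gi}(6) to get $[K:H]\geq[\phi K:\phi H]$, and cancel the finite factor $[\phi K:\phi H]$. Your explicit case distinction for the cancellation step is a minor refinement of what the paper leaves implicit.
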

\begin{proof}
By Lemma \ref{gi}(1,6), $[\phi K:H]=[\phi K:K]\cdot[K:H]\geq [\phi K:K][\phi K:\phi H]$. Similarly, $[\phi K:H]=[\phi K:\phi H][\phi H:H]$. Thus,
$[\phi K:\phi H][\phi H:H]\geq [\phi K:K][\phi K:\phi H]$, and hence $[\phi H:H]\geq [\phi K:K]$.
\end{proof}

Consider now a group $B$, and let $B'\leq B$ and  $A\normal B$; we obtain the following diagram:
\begin{equation*}
\xymatrix{
& 1\ar[d] & 1 \ar[d] & 1 \ar[d] & \\
1\ar[r] & A\cap B'\ar[d]\ar[r] & B' \ar[d]\ar[r] & B'A/A\ar[r]\ar[d] & 1 \\
1\ar[r] & A\ar[d]\ar[r] & B \ar[d]\ar[r] & B/A\ar[r]\ar[d] & 1 \\
 & A/A\cap B' & B/B'  & B/B'A &  \\
}
\end{equation*}
If the groups involved in the above diagram are Abelian, then an easy application of the Snake Lemma gives that $|B/B'|=|A/A\cap B'|\cdot | B/B'A|$, as these groups fit into a suitable (short) exact sequence. In the following lemma we generalize this fact to the non-Abelian situation. In fact, we need to work in a slightly more general setting than that in the above picture, that is, we want to just assume that $B'A$ is a subgroup of $B$ (i.e., $B'A=AB'$), but allowing $A$ not to be normal in $B$. 

\begin{lemma}\label{snake}
Let $B$ be a group and $A$, $B'$ subgroups of $B$ with $A'=B'\cap A$. If $B'A=AB'$, then 
$$[B:B']= [A:A'] \cdot [B:B'A]\,.$$
\end{lemma}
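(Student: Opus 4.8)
The plan is to reduce the general (non-normal) statement to the counting identity $[B:B'] = [B : B'A]\cdot[B'A : B']$ from Lemma \ref{gi}(1), applied with the intermediate subgroup $B'A$ (which is a subgroup precisely because $B'A = AB'$), and then to identify $[B'A : B']$ with $[A : A']$. First I would invoke Lemma \ref{gi}(1) with the chain $B' \leq B'A \leq B$ to get
$$[B:B'] = [B : B'A]\cdot[B'A : B']\,.$$
Then the only thing left is to show $[B'A : B'] = [A : A']$, where $A' = A \cap B'$. Here I would note that $[B'A : B'] = [AB' : B']$ since $B'A = AB'$, and this is exactly the quantity to which Lemma \ref{gi}(2) applies (with $L = A$, $H = B'$): it gives $[AB' : B'] = [A : B' \cap A] = [A : A']$. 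Combining the two displays yields $[B:B'] = [A:A']\cdot[B:B'A]$, as desired.

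The step that requires a little care — and which I expect is the only real content beyond bookkeeping — is making sure Lemma \ref{gi}(2) is genuinely applicable in this generality, i.e.\ that the identity $[LH:H] = [L : H\cap L]$ holds for arbitrary subgroups $L, H$ of an abstract group without any normality hypothesis. This is the standard second-isomorphism-theorem counting statement: the left cosets of $H$ inside the set $LH$ are precisely $\{\ell H : \ell \in L\}$, and $\ell_1 H = \ell_2 H$ iff $\ell_2^{-1}\ell_1 \in H \cap L$, so the map $\ell(H\cap L) \mapsto \ell H$ is a well-defined bijection from $L/(H\cap L)$ onto $LH/H$. Since Lemma \ref{gi} is stated earlier in the excerpt, I may simply cite parts (1) and (2) of it; no normality of $A$ in $B$ is needed anywhere, which is exactly the point of stating the lemma in this form rather than appealing to the Snake Lemma as in the Abelian diagram above.

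In short, the proof is two lines: apply Lemma \ref{gi}(1) to $B' \leq B'A \leq B$, then apply Lemma \ref{gi}(2) (together with $B'A = AB'$) to rewrite $[B'A : B']$ as $[A : A']$. No diagram chase or exactness argument is required in the non-Abelian case; the hypothesis $B'A = AB'$ is used exactly twice, once to ensure $B'A$ is a subgroup so that Lemma \ref{gi}(1) applies, and once to pass between $B'A$ and $AB'$ so that Lemma \ref{gi}(2) applies.
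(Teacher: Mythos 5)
Your proof is correct and is in substance the same as the paper's: the paper's direct fiber computation for the surjection $bB'\mapsto bB'A$ (its Claims 1 and 2) is exactly the content of the tower formula $[B:B']=[B:B'A]\cdot[B'A:B']$ together with the identification $[B'A:B']=[AB':B']=[A:A']$ from Lemma \ref{gi}(2). Your packaging via Lemma \ref{gi}(1,2) is a legitimate shortcut, and you correctly isolate the one point needing care, namely that $B'A=AB'$ is what makes $B'A$ a subgroup so that the tower formula applies.
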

\begin{proof}
It is not hard to check that the following map is well-defined and surjective:
$$\pi:B/B'\to B/B'A \ \ \ \text{such that}\ \ \ bB'\mapsto bB'A\,.$$
Hence, we should just verify that $|\pi^{-1}(bB'A)|=|A/A'|$ for all $b\in B$. This follows from the next two claims describing the fibers of $\pi$:
\begin{enumerate}[\rm ({Claim.}1)]
\item for $b_1,b_2\in B$, $\pi(b_1B')=\pi(b_2B')$ if and only if there exists $a \in A$ such that $b_1B'=b_2aB'$;
\item for $b \in B$ and $a_1,a_2\in A$, $ba_1B'=ba_2B'$ if and only if $a_1A'=a_2A'$. 
\end{enumerate} 
To verify (Claim.1), proceed as follows:
$\pi(b_1B')=\pi(b_2B')$ if and only if $b_2^{-1}b_1 \in B'A=AB'$, if and only if there exist $b\in B'$ and $a\in A$ such that $b_2^{-1}b_1=ab$, so that $b_1B'=b_2abB'=b_2aB'$, as desired.
\\
To verify (Claim.2), just notice that $ba_1B'=ba_2B'$ is equivalent to say that $a_2^{-1}a_1=a_2^{-1}b^{-1}ba_1=(ba_2)^{-1}(ba_1)\in B'$, that is, $a_2^{-1}a_1 \in A \cap B'=A'$, so that $a_1A'=a_2A'$.
\end{proof}

\subsection{Cotrajectories}

Let $X$ be a topological space and $\phi:X\to X$ a continuous self-map. Given $n\in\N$ and $U\subseteq X$, let
$$U_{-n}=U\cap \phi^{-1} U\cap\ldots\cap\phi^{-n} U\,;$$ 
the \emph{$\phi$-cotrajectory} of $U$ is
$$U_-=\bigcap_{n=0}^\infty\phi^{-n} U=\bigcap_{n=0}^\infty U_{-n}\,.$$
If $U$ is open (respectively, compact), then so is $U_{-n}$ for all $n\in\N$. Similarly, if $U$ is compact, then so is $U_-$. 

\begin{remark}
In the context of topological entropy (for example, see \cite{DG-islam}) the notations $C_n(\phi,U)$ and $C(\phi,U)$ are commonly used in place of $U_{-n}$ and $U_{-}$, that are commonly used for the study of the scale (see \cite{Willis,Willis_endo}). We adopt the shorter version, even if, in some cases, this may be slightly more ambiguous.
\end{remark}

In view of the above remark, we clarify now some notations. Let $G$ be a \tdlc group and let $\phi\in\End(G)$. Given $n\in\N$ and $U\in\mathcal B(G)$, the index $[U:U_{-n}]$ is finite (as $U$ is compact and $U_{-n}$ is open), for all $n\in\N$. 
Furthermore, given a $\phi$-invariant closed subgroup $H$ of $G$, denoting by $\bar\phi:G/H\to G/H$ the map induced by $\phi$, and letting $\pi:G\to G/H$ be the canonical projection, then for all $U\in\B(G)$
$$(U\cap H)_{-n}=(U\cap H)\cap (\phi\restriction_{H})^{-1}(U\cap H)\cap\ldots\cap(\phi\restriction_{H})^{-n}(U\cap H)$$ 
and
$$(\pi U)_{-n}=\pi U\cap \bar\phi^{-1}(\pi U)\cap\ldots\cap\bar\phi^{-n}(\pi U)\,.$$

\begin{lemma}\label{basic_cot}
Let $G$ be a \tdlc group, $\phi\in\End(G)$ and $U\in\B(G)$. For any $n\in\N$, let
$$c_n=[U:U_{-n}]\ \ \ \text{and}\ \ \ \alpha_n=[U_{-n}:U_{-n-1}]\,.$$
The following statements hold true:
\begin{enumerate}[\rm (1)]
\item $c_{n}$ divides $c_{n+1}$ for all $n\in\N$, and $\alpha_n=c_{n+1}/c_{n}$;
\item $\alpha_{n+1}\leq \alpha_n$, for all $n\in \N$;
\item the sequence $(\alpha_n)_{n\in\N}$ stabilizes.
\end{enumerate}
\end{lemma}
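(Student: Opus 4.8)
The key structural observation, which I would establish first, is the \emph{self-similarity} of cotrajectories:
\begin{equation}\label{self-sim}
U_{-n-1}=U_{-n}\cap\phi^{-1}(U_{-n})\qquad\text{for all }n\in\N.
\end{equation}
Indeed, $\phi^{-1}(U_{-n})=\phi^{-1}U\cap\phi^{-2}U\cap\dots\cap\phi^{-n-1}U$, and intersecting with $U_{-n}=U\cap\phi^{-1}U\cap\dots\cap\phi^{-n}U$ yields exactly $U_{-n-1}$. Note also that all the subgroups in sight lie in $\B(G)$, so every index below is a finite positive integer (each $U_{-m}$ is compact and each $U_{-m-1}$ is open in it).

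\smallskip\noindent
\emph{Part (1).} Since $U_{-n-1}\leq U_{-n}\leq U$, Lemma \ref{gi}(1) gives $[U:U_{-n-1}]=[U:U_{-n}]\cdot[U_{-n}:U_{-n-1}]$, i.e.\ $c_{n+1}=c_n\alpha_n$. Hence $c_n\mid c_{n+1}$ and $\alpha_n=c_{n+1}/c_n$.

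\smallskip\noindent
\emph{Part (2).} This is the main point. Using \eqref{self-sim} for the indices $n$ and $n+1$ we have $U_{-n-1}\leq\phi^{-1}(U_{-n})$ and $U_{-n-2}=U_{-n-1}\cap\phi^{-1}(U_{-n-1})$. Apply Lemma \ref{gi}(3) with the ambient pair $\phi^{-1}(U_{-n})\geq\phi^{-1}(U_{-n-1})$ and the subgroup $L=U_{-n-1}$: since $\phi^{-1}(U_{-n})\cap U_{-n-1}=U_{-n-1}$ and $\phi^{-1}(U_{-n-1})\cap U_{-n-1}=U_{-n-2}$, this yields
\[
[\phi^{-1}(U_{-n}):\phi^{-1}(U_{-n-1})]\ \geq\ [U_{-n-1}:U_{-n-2}]=\alpha_{n+1}.
\]
On the other hand, Lemma \ref{gi}(5) applied to $\phi$ and the pair $U_{-n}\geq U_{-n-1}$ gives
\[
[\phi^{-1}(U_{-n}):\phi^{-1}(U_{-n-1})]\ \leq\ [U_{-n}:U_{-n-1}]=\alpha_n.
\]
Chaining the two inequalities gives $\alpha_{n+1}\leq\alpha_n$.

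\smallskip\noindent
\emph{Part (3).} By (2), $(\alpha_n)_{n\in\N}$ is a non-increasing sequence of positive integers, hence it is eventually constant.

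\smallskip\noindent
The only step requiring genuine care is Part (2): one must resist the temptation to use injectivity or surjectivity of $\phi$ (neither is assumed), and instead route the comparison through $\phi^{-1}$, where \eqref{self-sim} makes the relevant intersections collapse cleanly. Everything else is a bookkeeping application of Lemma \ref{gi}.
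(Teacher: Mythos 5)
Your proof is correct and takes essentially the same route as the paper: part (1) is the same application of Lemma \ref{gi}(1), part (2) is the same chain $\alpha_{n+1}\leq[\phi^{-1}U_{-n}:\phi^{-1}U_{-n-1}]\leq\alpha_n$ via Lemma \ref{gi}(3) and (5) (the paper intersects with $U$ rather than $U_{-n-1}$ in the step using (3), which gives the same subgroups), and part (3) is the same monotone-integer-sequence observation. The only cosmetic slip is the claim that ``all the subgroups in sight lie in $\B(G)$'': $\phi^{-1}(U_{-n})$ need not be compact, but the relevant index is finite anyway by Lemma \ref{gi}(5), so nothing is affected.
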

\begin{proof}
\noindent
(1) Since $U\geq U_{-n}\geq U_{-n-1}$, it follows from Lemma \ref{gi}(1) that 
$$[U:U_{-n-1}]=[U:U_{-n}][U_{-n}:U_{-n-1}]\,.$$
Thus, $c_{n+1}/c_n=[U_{-n}:U_{-n-1}]=\alpha_n$ is a positive integer.

\noindent
(2) For any given $n\in\N$,
$$\alpha_n=[U_{-n}:U_{-n-1}]\overset{(*)}{\geq}[\phi^{-1}U_{-n}:\phi^{-1}U_{-n-1}]\overset{(**)}{\geq} [\phi^{-1}U_{-n}\cap U:\phi^{-1}U_{-n-1}\cap U]=\alpha_{n+1}\,,$$
where $(*)$ and $(**)$ use Lemma \ref{gi}(5) and (3), respectively.

\noindent
(3) follows by (2). 
\end{proof}

In the following definition we recall some useful subgroups, namely $U_+$ and $U_n$ (for $n\in\N$), of a given $U\in\B(G)$, as introduced in \cite{Willis_endo}. The subgroups of the form $U_+$ will be crucial for the Limit Free Formula given in Proposition \ref{limit_free} (in this respect, see also Remark \ref{remark_W_traj}) and for the connection between topological entropy and scale given in Section \ref{htopvsscale}, as we briefly discussed in the Introduction.

\begin{definition}\cite{Willis_endo}\label{newU+}
Let $G$ be a \tdlc group, $\phi\in\End(G)$ and $U\in\B(G)$. Define $U_n$ inductively as follows:
\begin{enumerate}[\rm --]
\item $U_0=U$;
\item $U_{n+1}=U\cap\phi U_n$, for all $n\in\N$.
\end{enumerate}
Let also $U_{+}=\bigcap_{n\in\N}U_n$.
\end{definition}

Notice that $U_n\geq U_{n+1}\geq U_+$ and $U_n$ is compact for all $n\in\N$; similarly, $U_+$ is compact.

\begin{lemma}\label{basic_willis}\emph{\cite[Proposition 1, Lemma 2]{Willis_endo}}
Let $G$ be a \tdlc group, $\phi\in\End(G)$ and $U\in\B(G)$. The following properties hold:
\begin{enumerate}[\rm (1)]
\item $U_n=\{u\in U:\exists v\in U\text{ with $\phi^jv\in U$ for $j\in\{0,1,\ldots,n\}$ and $u=\phi^nv$}\}=\phi^n U_{-n}$ for all $n\in\N$;
\item $U_+=\{u\in U:\exists (u_n)_{n\in\N}\in U^{\N} \text{ such that $\phi(u_{n+1})=u_{n}$, for $n\geq 0$, and $u_0=u$}\}$;
\item $U_+=U\cap \phi U_+\leq \phi U_+$;
\item $\phi^kU_{-n}=U_k\cap U_{k-n}$ for all $k\leq n$ in $\N$ (in particular, $\phi^nU_{-n}=U_n$).
\end{enumerate}
\end{lemma}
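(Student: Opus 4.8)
The four items sit in a natural logical order, and my plan is to prove $(1)$ first, then read off $(4)$, then $(2)$, and finally $(3)$. Before starting I would record the routine observation that each $U_n$ is a \emph{compact subgroup} of $G$: this is clear for $U_0=U$, and inductively $\phi U_n$ is a compact subgroup (continuous homomorphic image of one), so $U_{n+1}=U\cap\phi U_n$ is a closed subgroup of the compact group $U$; consequently $U_+=\bigcap_nU_n$ is a compact subgroup as well. I also note the recursion $U_{-(n+1)}=U_{-n}\cap\phi^{-(n+1)}U$, immediate from the definition of the cotrajectory.

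For $(1)$, I would first observe that the set written on the right is literally $\phi^nU_{-n}$, since ``$\phi^jv\in U$ for $j=0,\dots,n$'' is exactly the statement $v\in U_{-n}$, and then $u=\phi^nv\in U$ is automatic. The equality $\phi^nU_{-n}=U_n$ I would then prove by induction on $n$; the base $n=0$ is trivial. For the step I use the recursion above together with $U_{n+1}=U\cap\phi U_n$: if $w\in U_{-(n+1)}$ then $w\in U_{-n}$, so $\phi^nw\in\phi^nU_{-n}=U_n$ by induction and hence $\phi^{n+1}w\in\phi U_n$, while $w\in\phi^{-(n+1)}U$ gives $\phi^{n+1}w\in U$; conversely, any $y\in U\cap\phi U_n$ can be written $y=\phi z$ with $z=\phi^nw$, $w\in U_{-n}$, by induction, and then $y\in U$ forces $w\in\phi^{-(n+1)}U$, i.e.\ $w\in U_{-(n+1)}$. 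No appeal to injectivity of $\phi$ is needed anywhere, which is the one point to stay alert about.

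Statement $(4)$ I would deduce from $(1)$ by unwinding definitions: for $k\le n$, an element $w$ lies in $\phi^kU_{-n}$ exactly when $w=\phi^kv$ for some $v$ with $\phi^jv\in U$ for all $0\le j\le n$; splitting the range at $j=k$, the part $0\le j\le k$ says (via $(1)$) that $w\in\phi^kU_{-k}=U_k$, while the part $k\le j\le n$ says $\phi^iw\in U$ for $0\le i\le n-k$, i.e.\ $w\in U_{-(n-k)}$ --- the ``$U_{k-n}$'' of the statement --- and conversely these two facts recombine to give $w\in\phi^kU_{-n}$. Taking $k=n$ recovers $\phi^nU_{-n}=U_n$. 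For $(2)$, write $S$ for the right-hand side. That $S\subseteq U_+$ is formal: from a witnessing sequence $(u_n)$ one has $\phi^ju_n=u_{n-j}\in U$ for $j\le n$, hence $u_n\in U_{-n}$ and $u=\phi^nu_n\in\phi^nU_{-n}=U_n$ for every $n$, so $u\in\bigcap_nU_n=U_+$. The reverse inclusion is the one step that is not mere bookkeeping. Given $u\in U_+$, I would argue by compactness inside $W=\prod_{n\in\N}U$ (compact by Tychonoff): for each $N$, the set $W_N$ of sequences $(u_n)$ with $u_0=u$ and $\phi u_{n+1}=u_n$ for $n<N$ is closed (the constraints are closed, by continuity of $\phi$) and nonempty --- since $u\in U_N=\phi^NU_{-N}$, pick $v\in U_{-N}$ with $\phi^Nv=u$ and set $u_n=\phi^{N-n}v$ for $n\le N$ (these lie in $U$ because $v\in U_{-N}$) and $u_n=1$ for $n>N$ --- and the $W_N$ decrease with $N$; so $\bigcap_NW_N\ne\emptyset$, and any element of it is a sequence witnessing $u\in S$.

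Finally, $(3)$ falls out of the description in $(2)$: if $u\in U_+$ has witnessing sequence $(u_n)$, the shifted sequence $(u_{n+1})_{n\ge0}$ shows $u_1\in U_+$, and $u=\phi u_1\in\phi U_+$, so $U_+\subseteq U\cap\phi U_+$; conversely, if $x\in U\cap\phi U_+$, say $x=\phi v$ with $v\in U_+$ witnessed by $(v_n)$, then prepending $x$ (i.e.\ $u_0=x$, $u_{n+1}=v_n$) witnesses $x\in U_+$. Since $U_+$ and $\phi U_+$ are subgroups, the set equality $U_+=U\cap\phi U_+$ also gives $U_+\le\phi U_+$. The only genuinely non-formal ingredient in the whole argument is the compactness step in the converse half of $(2)$ --- a K\"onig's-lemma-style extraction of a coherent backward orbit; the rest is induction and definition-chasing, with the sole recurring subtlety being to manipulate explicit $\phi$-preimages rather than to pretend $\phi$ is invertible.
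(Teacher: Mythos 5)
Your proof is correct. Note that the paper does not prove this lemma at all: it is quoted verbatim from Willis (\cite[Proposition 1, Lemma 2]{Willis_endo}), so there is no in-paper argument to compare against. Your self-contained treatment is sound and follows the natural route: the induction for (1) is clean and, as you flag, never needs $\phi$ to be injective; (4) correctly recombines the two halves of the condition using the \emph{same} preimage $v$; and the one non-formal step, the converse inclusion in (2), is handled properly by the Tychonoff/K\"onig extraction of a coherent backward orbit from the nonempty decreasing closed sets $W_N$ (nonemptiness being exactly where $u\in U_N=\phi^N U_{-N}$ is used). One small remark: (3) can alternatively be obtained without (2) from the identity $\bigcap_{n}\phi U_n=\phi\bigl(\bigcap_n U_n\bigr)$ for the decreasing chain of compact sets $U_n$ (this is \cite[Lemma 1]{Willis_endo}, which the paper does invoke separately in the proof of Proposition \ref{limit_free}); your derivation of (3) from (2) packages that same compactness input inside the proof of (2), which is a perfectly good trade.
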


Since $U_+$ is compact, so is $\phi U_+$. Furthermore, since $U$ is open, $U_+=U\cap \phi U_+$ is open in $\phi U_+$. This shows that the index $[\phi U_+:U_+]\ \text{is finite}\,.$

\begin{lemma}\label{magia_di_willis}
Let $G$ be a \tdlc group, $\phi\in\End(G)$ and $U\in\B(G)$. Then:
\begin{enumerate}[\rm (1)]
\item $U_{-n+1}\cap \phi^{-n}U_{n}=U_{-n}$;
\item $[\phi U_n:U_{n+1}]\overset{}{=}[U_{-n}:U_{-n-1}]$.
\end{enumerate}
\end{lemma}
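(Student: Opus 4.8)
\textbf{Proof plan for Lemma \ref{magia_di_willis}.}

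The plan is to unwind the inductive definition of the subgroups $U_n$ together with the two descriptions of $U_{-n}$ and $U_n$ provided by Lemma \ref{basic_willis}, and then to deduce (2) from (1) by an index computation using the homomorphism $\phi$.

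For part (1), I would first prove the inclusion $U_{-n}\subseteq U_{-n+1}\cap\phi^{-n}U_n$. The containment $U_{-n}\subseteq U_{-n+1}$ is immediate since $U_{-n}=U_{-n+1}\cap\phi^{-n}U$. For the other factor, note that $\phi^n U_{-n}=U_n$ by Lemma \ref{basic_willis}(1), so $U_{-n}\subseteq\phi^{-n}(\phi^n U_{-n})=\phi^{-n}U_n$. Conversely, suppose $u\in U_{-n+1}\cap\phi^{-n}U_n$; I must show $u\in U_{-n}$, i.e. that $\phi^j u\in U$ for $j=0,\dots,n$. For $j\le n-1$ this is exactly the statement $u\in U_{-n+1}$. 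For $j=n$: since $\phi^n u\in U_n\subseteq U$, we are done. So the only real content is checking that $U_n\subseteq U$, which holds because $U_0=U$ and $U_{n+1}=U\cap\phi U_n\subseteq U$. Thus $U_{-n+1}\cap\phi^{-n}U_n=U_{-n}$, keeping in mind that these are genuinely subgroups (each $U_{-k}$ and each $U_k$ is a compact subgroup of $G$, and $\phi$ is an endomorphism, so preimages are subgroups and the intersection is a subgroup).

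For part (2), the strategy is to compute $[\phi U_n:U_{n+1}]$ by pulling back along $\phi$ and then applying part (1). Since $U_{n+1}=U\cap\phi U_n$, we have $\phi^{-1}U_{n+1}=\phi^{-1}U\cap\phi^{-1}\phi U_n\supseteq\phi^{-1}U\cap U_n$; more to the point, I would use Lemma \ref{gi}(5) applied to $\phi$ with $H=U_{n+1}\le K=\phi U_n$, giving
$$[\phi^{-1}(\phi U_n):\phi^{-1}U_{n+1}]=[\phi U_n\cap\Im(\phi):U_{n+1}\cap\Im(\phi)]\,.$$
Now $\phi U_n\subseteq\Im(\phi)$, so the right-hand side is just $[\phi U_n:U_{n+1}]$. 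Hence it suffices to identify $\phi^{-1}(\phi U_n)$ and $\phi^{-1}U_{n+1}$ with $U_{-n}$-type objects. Here the cleaner route is to observe that by Lemma \ref{basic_willis}(1) we have $U_n=\phi^n U_{-n}$ and $U_{n+1}=\phi^{n+1}U_{-n-1}$, so that $\phi U_n=\phi^{n+1}U_{-n}$. Then I want to show $\phi^{-1}(\phi^{n+1}U_{-n})$, suitably intersected, recovers $U_{-n}$ up to the $\Im(\phi)$ correction, and similarly $\phi^{-1}U_{n+1}$ recovers $U_{-n-1}$; the key geometric fact linking the two is precisely part (1), which says $U_{-n}=U_{-n+1}\cap\phi^{-n}U_n$, equivalently (shifting the index) that passing between the $(n)$ and $(n{+}1)$ levels of the cotrajectory is controlled by $U_n$ versus $U_{n+1}$. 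Concretely I expect the cleanest argument to run: apply Lemma \ref{gi}(5) to the map $\phi^n\colon G\to G$ (or to $\phi$ iterated) restricted appropriately, using that $U_n=\phi^n U_{-n}\subseteq\Im(\phi^n)$, to get $[\phi^n U_{-n}:\phi^n U_{-n-1}]=[U_{-n}:U_{-n-1}]$ — wait, that is not quite $[\phi U_n:U_{n+1}]$; rather one writes $[\phi U_n:U_{n+1}]=[\phi U_n:U\cap\phi U_n]=[\phi U_n\cdot U/U \cdots]$. The honest plan is: use part (1) in the form $U_{-n}=\phi^{-n}U_n\cap U_{-(n-1)}$, deduce $\phi^{-(n+1)}U_{n+1}\cap U_{-n}=U_{-(n+1)}$, apply $\phi^{n+1}$ to all of $U_{-n}$ and note $\phi^{n+1}U_{-n}=\phi U_n$ while $\phi^{n+1}U_{-(n+1)}=U_{n+1}$, and then invoke Lemma \ref{gi}(5)–(6) to transfer the index $[U_{-n}:U_{-n-1}]$ through the map $\phi^{n+1}$, whose relevant kernel-and-image bookkeeping is exactly accounted for by the intersection with $U_{-n}$ in part (1). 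Equivalently one can cite Lemma \ref{basic_willis}(4), $\phi^k U_{-n}=U_k\cap U_{k-n}$, with $k=n+1$.

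The main obstacle I anticipate is the index bookkeeping in part (2): $\phi$ is only an endomorphism, not injective, so one cannot naively say $[\phi A:\phi B]=[A:B]$; the correct statement (Lemma \ref{gi}(6)) involves $\ker(\phi)$, namely $[\phi A:\phi B]=[A\ker\phi:B\ker\phi]$, and one must check that the extra $\ker(\phi)$-coset counting collapses. This is where part (1) does the real work: the identity $U_{-n}=U_{-n+1}\cap\phi^{-n}U_n$ is precisely the combinatorial input that makes the kernel contributions cancel between numerator and denominator when one passes from $[U_{-n}:U_{-n-1}]$ to $[\phi U_n:U_{n+1}]$. So the proof of (2) should be organized as: (a) rewrite both sides as indices of images of $U_{-n}, U_{-n-1}$ under $\phi^{n+1}$ using Lemma \ref{basic_willis}(1); (b) apply Lemma \ref{gi}(5) (pullback along $\phi$) repeatedly, or Lemma \ref{gi}(6), carefully tracking $\ker\phi$; (c) use part (1) to see the kernel terms match up, yielding the equality. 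I would keep the exposition short by leaning on Lemma \ref{basic_willis}(1),(4) and Lemma \ref{gi}(5),(6) rather than re-deriving anything.
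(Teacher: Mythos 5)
Your proposal is correct and, once the false starts are stripped away, its ``honest plan'' is exactly the paper's argument: part (1) is proved by the same two inclusions, and part (2) amounts to the bijection $xU_{-n-1}\mapsto\phi^{n+1}xU_{n+1}$ between $U_{-n}/U_{-n-1}$ and $\phi U_n/U_{n+1}$, with surjectivity from $\phi^{n+1}U_{-n}=\phi U_n$, $\phi^{n+1}U_{-n-1}=U_{n+1}$ and injectivity from part (1) at level $n+1$ (i.e.\ $U_{-n}\cap\phi^{-n-1}U_{n+1}=U_{-n-1}$), which is precisely the kernel bookkeeping you identify as the crux.
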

\begin{proof}
(1) It is clear that $U_{-n}\leq U_{-n+1}$, while $U_{-n}\leq\phi^{-n}U_n$ by Lemma \ref{basic_willis}(4). On the other hand, $\phi^{-n}U_{n}\leq \phi^{-n}U$, so that
$U_{-n+1}\cap\phi^{-n}U_{n}\leq U_{-n+1}\cap\phi^{-n}U=U_{-n}$.

\smallskip\noindent
(2) Consider the map
$$\Phi:U_{-n}/U_{-n-1}\to \phi U_n/U_{n+1}\ \ \ \text{such that}\ \ \ \Phi(xU_{-n-1})=\phi^{n+1}xU_{n+1}\,.$$
Then $\Phi$ is well-defined and surjective by Lemma \ref{basic_willis}(4). Let us prove that it is injective. Indeed, choose $x,y\in U_{-n}$ such that $\phi^{n+1}xU_{n+1}=\phi^{n+1}yU_{n+1}$. This means that $\phi^{n+1}(y^{-1}x)\in U_{n+1}$, so $y^{-1}x\in U_{-n}\cap\phi^{-n-1}U_{n+1}$. By part (1), $U_{-n}\cap\phi^{-n-1}U_{n+1}=U_{-n-1}$, so that $xU_{-n-1}=yU_{-n-1}$, concluding the proof.
\end{proof}

We conclude the section with two basic lemmas that will be useful in the next section.

\begin{lemma}\label{leqU+}
Let $G$ be a \tdlc group, $\phi\in\End(G)$, $U\in\B(G)$ and $H$ a subgroup of $G$ with $H\leq U$.
\begin{enumerate}[\rm (1)]
\item If $\phi H\leq H$, then $H\leq U_{-n}$ for every $n\in\N$; in particular, $H\leq U_-$.
\item If $H\leq\phi H$, then $H\leq U_n$ for every $n\in\N$; in particular, $H\leq U_+$.
\end{enumerate}
\end{lemma}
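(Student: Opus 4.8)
The plan is to prove both parts by induction on $n$, using the recursive definitions directly. For part (1), the defining relations are $U_{-0}=U$ and $U_{-n}=U\cap\phi^{-1}(U_{-(n-1)})$ (equivalently $U_{-n}=U\cap\phi^{-1}U\cap\cdots\cap\phi^{-n}U$). The base case $n=0$ is just the hypothesis $H\leq U$. For the inductive step, assume $H\leq U_{-(n-1)}$. Since $\phi H\leq H\leq U_{-(n-1)}$, we have $H\leq\phi^{-1}(\phi H)\leq\phi^{-1}(U_{-(n-1)})$; combining with $H\leq U$ gives $H\leq U\cap\phi^{-1}(U_{-(n-1)})=U_{-n}$. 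Taking the intersection over all $n$ yields $H\leq U_-$.

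For part (2), I would use the recursion from Definition \ref{newU+}: $U_0=U$ and $U_{n+1}=U\cap\phi U_n$. The base case $n=0$ is again the hypothesis $H\leq U$. For the inductive step, assume $H\leq U_n$. Then $\phi H\leq\phi U_n$, and since $H\leq\phi H$ we get $H\leq\phi U_n$; together with $H\leq U$ this gives $H\leq U\cap\phi U_n=U_{n+1}$. Intersecting over all $n$ gives $H\leq U_+=\bigcap_{n\in\N}U_n$.

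There is really no serious obstacle here: both statements are immediate inductions that only use the hypotheses $H\leq U$ together with $\phi$-invariance (resp.\ the reverse inclusion), and the elementary fact that $A\leq B$ implies $\phi A\leq\phi B$ and $\phi^{-1}A\leq\phi^{-1}B$ for a group homomorphism $\phi$. The only point requiring a moment's care is making sure the two recursions are applied in the correct direction: in (1) one peels off the last factor $\phi^{-n}U$ and applies $\phi^{-1}$ to the inductive hypothesis, whereas in (2) one peels off the outermost $\phi$ and applies $\phi$ to the inductive hypothesis. Since $H$ is a subgroup throughout, all the sets involved are genuine subgroups and there are no measure- or topology-theoretic subtleties; the conclusions $H\leq U_-$ and $H\leq U_+$ then follow simply because an element lying in every $U_{-n}$ (resp.\ every $U_n$) lies in the intersection.
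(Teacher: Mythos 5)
Your proof is correct and follows essentially the same inductive argument as the paper; part (2) is identical, and in part (1) you merely use the equivalent recursion $U_{-n}=U\cap\phi^{-1}(U_{-(n-1)})$ where the paper peels off the factor $\phi^{-n-1}U$ instead, which is an immaterial difference.
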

\begin{proof}
(1) Since $\phi H\leq H$, it follows that $H\leq \phi^{-1}H$ and by induction one can verify that $H\leq\phi^{-n}H$ for every $n\in\N$. We proceed by induction to prove that $H\leq U_{-n}$ for every $n\in\N$. The case $n=0$ is the assumption. Assume that $H\leq U_{-n}$ for some $n\in\N$. Then $H\leq U_{-n}\cap \phi^{-n-1}(U)=U_{-n-1}$. This concludes the proof.

\smallskip\noindent
(2) We proceed by induction. For $n=0$ we find the hypothesis. If $H\leq U_n$ for some $n\in\N$, then $H\leq \phi H\leq \phi U_n$, so $H\leq U\cap \phi U_n=U_{n+1}$. 
\end{proof}

\begin{lemma}\label{NnormU}
Let $G$ be a \tdlc group and $\phi\in\End(G)$. Assume that $H$ is a subgroup of $G$ that normalizes a given $U\in\B(G)$.
\begin{enumerate}[\rm (1)]
\item If $H$ is $\phi$-invariant, then $H$ normalizes $U_{-n}$ for all $n\in\N$. Consequently, $H$ normalizes $U_-$.
\item If $H$ is $\phi$-stable, then $H$ normalizes $U_n$ for all $n\in\N$. Consequently, $H$ normalizes $U_+$.
\end{enumerate}
\end{lemma}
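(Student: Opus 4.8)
The plan is to prove both parts by the same two-step strategy. First I show that $H$ normalizes each of the ``building blocks'' out of which $U_{-n}$ (respectively $U_n$) is assembled; then I observe that for a fixed $h\in H$ the map $x\mapsto h^{-1}xh$ is an automorphism of $G$, hence commutes with arbitrary intersections, so normalization passes from the building blocks to $U_{-n}$ and $U_n$, and finally to $U_-=\bigcap_nU_{-n}$ and $U_+=\bigcap_nU_n$. Throughout I will use that, since $H$ and all subgroups in play are genuine subgroups, the inclusion $h^{-1}Vh\subseteq V$ for every $h\in H$ is equivalent to $h^{-1}Vh=V$ for every $h\in H$ (apply the inclusion to $h^{-1}$).

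For part (1), the key point is that $\phi^{-k}U$ is normalized by $H$ for every $k\in\N$. Fix $h\in H$ and $x\in h^{-1}(\phi^{-k}U)h$, so that $\phi^k(hxh^{-1})\in U$; since $\phi$ is an endomorphism this reads $\phi^k(h)\,\phi^k(x)\,\phi^k(h)^{-1}\in U$. Now $H$ is $\phi$-invariant, so $\phi^k(h)\in\phi^kH\subseteq H$, and since $H$ normalizes $U$ we get $\phi^k(h)^{-1}U\phi^k(h)=U$; hence $\phi^k(x)\in U$, i.e.\ $x\in\phi^{-k}U$. Thus $h^{-1}(\phi^{-k}U)h\subseteq\phi^{-k}U$, so $H$ normalizes $\phi^{-k}U$. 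Intersecting over $k=0,\dots,n$ shows that $H$ normalizes $U_{-n}=\bigcap_{k=0}^n\phi^{-k}U$, and intersecting over $n$ that $H$ normalizes $U_-$.

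For part (2), I argue by induction on $n$, the base case $U_0=U$ being the hypothesis. Assuming $H$ normalizes $U_n$, it suffices to show $H$ normalizes $\phi U_n$, since then $H$ normalizes $U_{n+1}=U\cap\phi U_n$ as an intersection of two $H$-normalized subgroups. Here I use that $H$ is $\phi$-stable: given $h\in H$, write $h=\phi(h')$ with $h'\in H$ (possible because $\phi H=H$); then, using that $\phi$ is a homomorphism and that $h'$ normalizes $U_n$,
\[
h^{-1}(\phi U_n)h=\phi(h')^{-1}\,\phi(U_n)\,\phi(h')=\phi\bigl(h'^{-1}U_nh'\bigr)=\phi U_n\,.
\]
Hence $H$ normalizes $U_n$ for every $n\in\N$, and therefore also $U_+=\bigcap_nU_n$.

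I do not expect a serious obstacle here: the only genuine subtlety is that part (2) really needs the stronger hypothesis $\phi H=H$ rather than just $\phi H\leq H$, because the argument requires every $h\in H$ to be the $\phi$-image of an element of $H$; correspondingly, part (1) gets by with the weaker assumption $\phi H\leq H$ (used only through $\phi^k(h)\in H$). Everything else is the routine fact that conjugation is an automorphism and so commutes with forming intersections and with applying the homomorphism $\phi$.
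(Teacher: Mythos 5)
Your proof is correct and follows essentially the same route as the paper: the paper likewise reduces to the first halves of (1) and (2) via the observation that conjugation commutes with intersections, proves (1) by pushing conjugation through preimages using $\phi$-invariance (phrased there as an induction on $n$ rather than normalizing each $\phi^{-k}U$ directly, a purely cosmetic difference), and proves (2) by exactly your argument, writing $h=\phi(h')$ with $h'\in H$. Your closing remark on why (2) genuinely needs $\phi H=H$ while (1) only needs $\phi H\leq H$ accurately reflects the hypotheses.
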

\begin{proof}
If $\{L_i\}_{i\in I}$ is a family of subgroups of $G$ such that $H$ normalizes $L_i$ for all $i\in I$, then $H$ normalizes $\bigcap_{i\in I}L_i$. Thus, it is enough to prove the first half of statements (1) and (2), as the second part follows by this observation.

\smallskip\noindent
(1) We proceed by induction on $n\in\N$. For $n=0$ there is nothing to prove. Given $n\in \N$ such that $H$ normalizes $U_{-n}$, let us show that $H$ normalizes $\phi^{-1}U_{-n}$. Indeed, given $x\in H$, since $H$ is $\phi$-invariant, $\phi(x^{-1}\phi^{-1}U_{-n}x)\subseteq\phi(x)^{-1}U_{-n}\phi(x)=U_{-n}$, and so $x^{-1}\phi^{-1}U_{-n}x\subseteq \phi^{-1}U_{-n}$. Thus, $H$ normalizes both $\phi^{-1}U_{-n}$ and $U$, so $H$ normalizes $U_{-n-1}=U\cap \phi^{-1} U_{-n}$. 

\smallskip\noindent
(2) We proceed by induction on $n\in\N$. For $n=0$ there is nothing to prove. Let $n\in \N$ and assume that $H$ normalizes $U_{n}$. We verify that $H$ normalizes $\phi U_n$. Indeed, given $x\in H$ there exists $z\in H$ such that $x=\phi(z)$, since $H$ is $\phi$-stable. Thus, $x^{-1}\phi U_n x=\phi(z^{-1}U_{n} z)=\phi U_{n}$. Thus, $H$ normalizes both $\phi U_n$ and $U$, so $H$ normalizes $U_{n+1}=U\cap \phi U_n$. 
\end{proof}

\section{Topological entropy in \tdlc groups}\label{topentintdlc}

\subsection{Entropy in uniform spaces}\label{htop-sec}

We first recall the version of Hood's definition of topological entropy that fits well for locally compact uniform spaces and then specialize it to the context of \tdlc groups.

\medskip
Let $(X,\U)$ be a locally compact uniform space and let $\phi: X \to X$ be a uniformly continuous self-map. For $V\in\U$, $x\in X$ and $n\in \N_{>0}$, let
$$D_n(\phi,V,x):= \bigcap_{k=0}^{n-1}\f^{-k}(V(\f^k x))\, .$$ 
Let $\B$ a fundamental system of entourages of $\U$, and recall that a Borel measure $m$ on $X$ is {\em $\phi$-homogeneous} if it satisfies the following conditions:
\begin{enumerate}[\rm ({Ho.}1)]
\item $m(K) < \infty$ for any compact subset $K\subseteq X$;
\item $m(K_0) > 0$ for some compact subset $K_0\subseteq X$;
\item for all $U\in\B$ there exist $V\in \B$ and $c\in\R_{>0} $ such that, for all $n\in \N_{>0}$ and all $x,y \in X$,
$$m(D_n(\phi, V,y)) \leq c \cdot m(D_n(\phi,U,x))\, .$$
\end{enumerate}

Suppose that there is a $\phi$-homogeneous measure $m$ on $X$. For all $U\in \U$ and $x\in X$, define
\begin{equation}\label{ccc}
k(\phi,U,x):=\limsup_{n\to\infty}-\frac{\log m(D_{n+1}(\phi,U,x))}{n}\, .
\end{equation}
The \emph{topological entropy} of $\phi$ can be defined by the following formula: for a given $x\in X$,
\begin{equation}\label{cccc}
h_{top}(\phi):=\sup\{k(\phi,U,x):U\in\B\}\,,
\end{equation}
It follows from (Ho.3) that the value $h_{top}(\phi)$ does not depend on the choice of $x\in X$. 

\begin{remark}
\begin{enumerate}[\rm (1)]
\item The definition of topological entropy given by Hood in \cite{hood}, following closely the ideas of Bowen in \cite{B}, applies to any uniformly continuous self-map $\phi:X\to X$ of a uniform space $(X,\U)$. For the general definition one needs to introduce the concepts of {\em separated} and {\em spanning} subsets; for this formalism we refer to \cite[Section 2]{hood} or, in the metric case, to \cite[Section 1]{B}. 
Notice also that the definition of $\phi$-homogeneous measure given by Hood slightly differs from ours, but they are easily seen to be equivalent. 
Following  the proof of \cite[Proposition 7]{B} with the obvious changes, one can show that the definition in \eqref{cccc}, when applicable, gives the same notion of entropy as the one defined by means of {separated} or {spanning} subsets.
\item The definition of topological entropy given in \eqref{cccc} applies to the case when $X$ is {\em locally compact} and there exists a $\phi$-homogeneous measure on $X$. The local compactness plays a very important role, in fact, we want $h_{top}$ to take values in $\R_{\geq0}\cup\{+\infty\}$. On the other hand, if $m(D_{n+1}(\phi,U,x))$ is infinite for all $n\in\N$, then $k(\phi,U,x)$ is $-\infty$. 
The hypothesis that $X$ is locally compact ensures that there exists $U\in\U$ such that $U(x)$ is contained in a compact, so that $m(D_n(\phi,U,x))$ is finite for all $n\in\N$, showing that $k(\phi,U,x)$ is not $-\infty$ and belongs to $\R_{\geq0}$.
\end{enumerate}
\end{remark}

\begin{lemma}\label{antim-eq}\label{fund}
In the above notation, the following properties hold true:
\begin{enumerate}[\rm (1)]
\item $k(\phi,U_2,x)\leq k(\phi,U_1,x)$, for all $x\in X$, and $U_1\subseteq U_2$ in $\B$;
\item $h_{top}(\phi)=\sup\{k(\phi,U,x):U\in\mathcal B'\}$, whenever $\B'\subseteq \B$ is a smaller fundamental system of entourages of $\U$.
\end{enumerate}
\end{lemma}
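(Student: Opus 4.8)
The plan is to reduce both parts to the elementary monotonicity of the cotrajectory blocks $D_n(\phi,-,x)$ with respect to inclusion of entourages, combined with monotonicity of $m$ and the fact that $-\log$ reverses order.

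For part (1), assume $U_1\subseteq U_2$ in $\B$. First I would observe that for every $z\in X$ the section $U_1(z)=\{w\in X:(w,z)\in U_1\}$ is contained in $U_2(z)$; intersecting the preimages $\phi^{-k}(U_i(\phi^kx))$ over $k=0,\dots,n-1$ then gives $D_n(\phi,U_1,x)\subseteq D_n(\phi,U_2,x)$ for every $n\in\N_{>0}$. Since $m$ is monotone this yields $m(D_{n+1}(\phi,U_1,x))\leq m(D_{n+1}(\phi,U_2,x))$, hence $-\log m(D_{n+1}(\phi,U_1,x))\geq -\log m(D_{n+1}(\phi,U_2,x))$, where we use the convention $-\log 0=+\infty$. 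Dividing by $n$ and passing to the $\limsup$ as $n\to\infty$ gives $k(\phi,U_1,x)\geq k(\phi,U_2,x)$, as claimed.

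For part (2), by definition $h_{top}(\phi)=\sup\{k(\phi,U,x):U\in\B\}$, and since $\B'\subseteq\B$ the inequality $\sup\{k(\phi,W,x):W\in\B'\}\leq h_{top}(\phi)$ is immediate. For the converse, I would fix $U\in\B$: since $\B'$ is a fundamental system of entourages of $\U$ and $U\in\U$, there is $U'\in\B'$ with $U'\subseteq U$, and because $\B'\subseteq\B$ both $U'$ and $U$ lie in $\B$, so part (1) gives $k(\phi,U,x)\leq k(\phi,U',x)\leq \sup\{k(\phi,W,x):W\in\B'\}$. Taking the supremum over $U\in\B$ yields $h_{top}(\phi)\leq\sup\{k(\phi,W,x):W\in\B'\}$, which finishes the argument.

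I do not expect a real obstacle here; the only delicate point is the behaviour of $-\log m$ when some $m(D_{n+1}(\phi,U,x))$ vanishes, but since $-\log$ is order-reversing on $[0,+\infty)$ and sends $0$ to $+\infty$, all the inequalities persist in $\R_{\geq0}\cup\{+\infty\}$ (and local compactness guarantees that for a suitable small entourage the quantities are finite, so the suprema are meaningful). The rest of part (2) is then a purely formal cofinality-and-monotonicity argument built on part (1).
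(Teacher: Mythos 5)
Your proof is correct and is exactly the argument the paper intends (the lemma is stated without proof as an immediate consequence of the definitions): monotonicity of $D_n(\phi,-,x)$ in the entourage, monotonicity of $m$, order reversal of $-\log$, and then a cofinality argument for part (2). The remark about the degenerate case $m(D_{n+1}(\phi,U,x))=0$ is a sensible precaution, though under local compactness and the homogeneity condition the relevant quantities are finite and positive for the entourages one actually uses.
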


Let us now return to our setting, that is, let $G$ be a \tdlc group and $\phi\in\End(G)$. Recall from \S\ref{tdlcq} that $\mathcal V=\{U_K:K\in \B(G)\}$, where $U_K=\{(x,y):y^{-1}x\in K\}$, is a fundamental system of entourages for the left uniformity $\U$ on $G$. Furthermore, for all $K\in\B(G)$ and $x\in G$, it is straightforward to prove that, for every $K\in\B(G)$, every $x\in G$ and $n\in\N$,
\begin{equation}\label{Dn=K-n}
D_{n+1}(\phi,U_K,x)=x K_{-n}\,.
\end{equation}
The left Haar measure $\mu$ on $G$ is $\phi$-homogeneous. Indeed, it clearly satisfies (Ho.1) and (Ho.2). Moreover, $\mu$ satisfies (Ho.3) with $\B=\mathcal V$, since, by the left invariance of $\mu$ and by \eqref{Dn=K-n}, for every $K\in\B(G)$, every $x\in G$ and $n\in\N$,
$$\mu(D_{n+1}(\phi,U_K,x))=\mu(K_{-n})\,;$$
hence, in (Ho.3) for $U\in\mathcal V$ it suffices to take $V=U$ and $c=1$.

\noindent Now, for $K\in\B(G)$, by \eqref{Dn=K-n} with $x=1$, we have
\begin{equation}\label{def1}
H_{top}(\phi,K):=k(\phi,U_K,1)=\limsup_{n\to \infty}-\frac{\log \mu(K_{-n})}{n}\,,
\end{equation}
and so, as it was noticed in \cite{DSV}, 
$$h_{top}(\phi)=\sup\{H_{top}(\phi,K):K\in\B(G)\}\,.$$

We consider now the topological entropy of $\bar\phi:G/H\to G/H$, where $H$ is a compact $\phi$-invariant subgroup of $G$.
Let $\pi:G\to G/H$ be the canonical projection. Recall from \S\ref{tdlcq} that $\bar{\mathcal V}=\{\bar U_K:K\in \B(G)\}$, where $\bar U_K=\{(xH,yH):y^{-1}x\in K\}$, is a fundamental system of entourages of the left uniformity $\bar\U$ of $G/H$. In fact, a consequence Lemma \ref{base_sub}(3) is that the smaller set $\{\bar U_K: K\in \B(G,H)\}\subseteq \bar {\mathcal V}$ is a fundamental system of entourages. 
By Lemma \ref{Haar_sul_quot}, there is a left invariant measure $\bar \mu$ on $G/H$ which satisfies (Ho.1) and (Ho.2). Proceeding as in the case of $\mu$ and $G$, and noticing that for every $K\in\B(G,H)$, every $x\in G$ and $n\in\N$,
\begin{equation}\label{Dn=K-nbar}
D_{n+1}(\bar\phi,\bar U_K,xH)=\lambda_x(\pi K)_{-n}\,,
\end{equation}
the left invariance of $\bar \mu$ easily gives (Ho.3); thus, $\bar \mu$ is $\bar \phi$-homogenous. 

\noindent 
Now, for $K\in\B(G,H)$, by \eqref{Dn=K-nbar} with $x=1$, we have
\begin{equation}\label{def2}
H_{top}(\bar \phi,\pi K):=k(\bar \phi,\bar U_K,H)=\limsup_{n\to\infty}-\frac{\log \bar \mu((\pi K)_{-n})}{n}\,.
\end{equation}
Thus, the {topological entropy} of $\bar \phi$ is
$$h_{top}(\bar \phi)=\sup\{H_{top}(\bar \phi,\pi K):K\in \B(G,H)\}\,.$$

\medskip
In Proposition \ref{lim} we are going to restate the formulas \eqref{def1} and \eqref{def2} without making recourse to the measure. We need first the following lemma:

\begin{lemma}\label{piC}
Let $G$ be a \tdlc group, $\phi\in\End(G)$, $H$ a closed $\phi$-invariant subgroup of $G$ and $\pi:G\to G/H$ the canonical projection.
If $K$ is a subgroup of $G$ containing $H$ and $n\in\N$, then $\pi(K_{-n})=(\pi K)_{-n}$.
\end{lemma}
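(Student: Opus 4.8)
The statement to prove is that $\pi(K_{-n}) = (\pi K)_{-n}$ for any subgroup $K$ of $G$ with $H \leq K$ and any $n \in \N$, where $\pi \colon G \to G/H$ is the canonical projection and $H$ is closed and $\phi$-invariant. My plan is to proceed by induction on $n$, using the recursive description $K_{-n} = K \cap \phi^{-1}K_{-(n-1)}$ and $(\pi K)_{-n} = \pi K \cap \bar\phi^{-1}((\pi K)_{-(n-1)})$, and the fact that $\pi$ commutes with $\phi$ in the sense that $\bar\phi \circ \pi = \pi \circ \phi$.

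\begin{proof}
We argue by induction on $n$. For $n=0$ there is nothing to prove, since $K_{0}=K$ and $(\pi K)_{0}=\pi K$.

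Assume $n\geq 1$ and that $\pi(K_{-(n-1)})=(\pi K)_{-(n-1)}$. We use the identities
$$K_{-n}=K\cap\phi^{-1}\big(K_{-(n-1)}\big)\qquad\text{and}\qquad (\pi K)_{-n}=\pi K\cap\bar\phi^{-1}\big((\pi K)_{-(n-1)}\big)\,,$$
which are immediate from the definition of cotrajectories, together with the relation $\bar\phi\circ\pi=\pi\circ\phi$ coming from the fact that $\bar\phi$ is induced by $\phi$.

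The inclusion $\pi(K_{-n})\subseteq(\pi K)_{-n}$ is straightforward: if $x\in K_{-n}$, then $x\in K$ so $\pi x\in\pi K$, and $\phi x\in K_{-(n-1)}$ so $\bar\phi(\pi x)=\pi(\phi x)\in\pi(K_{-(n-1)})=(\pi K)_{-(n-1)}$ by the inductive hypothesis; hence $\pi x\in(\pi K)_{-n}$.

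For the reverse inclusion, let $xH\in(\pi K)_{-n}$. Since $xH\in\pi K$ and $H\leq K$, we may choose the representative $x$ so that $x\in K$. Moreover $\bar\phi(xH)=\pi(\phi x)\in(\pi K)_{-(n-1)}=\pi(K_{-(n-1)})$ by the inductive hypothesis, so there is $y\in K_{-(n-1)}$ with $\pi(\phi x)=\pi y$, i.e. $y^{-1}\phi x\in H$. Write $\phi x=y h$ with $h\in H$. Since $H$ is $\phi$-invariant, pick $h'\in H$ with $\phi(h')\in\phi(H)\subseteq H$; more directly, since $H\leq K_{-(n-1)}$ (this holds because $H$ is $\phi$-invariant and $H\leq K$, by Lemma~\ref{leqU+}(1) applied with $U=K$, or simply because $H\leq K$ and $\phi^{-j}H\supseteq H$ for all $j$), we have $\phi x=y h\in K_{-(n-1)}H=K_{-(n-1)}$. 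Thus $x\in K\cap\phi^{-1}(K_{-(n-1)})=K_{-n}$, and therefore $xH=\pi x\in\pi(K_{-n})$.

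This establishes $\pi(K_{-n})=(\pi K)_{-n}$ and completes the induction.
\end{proof}

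The main point to be careful about is the reverse inclusion at the inductive step: one needs $K_{-(n-1)}$ to absorb $H$ under multiplication on the right, which is exactly the content of $H\leq K_{-(n-1)}$. This follows since $H$ is $\phi$-invariant and contained in $K$, so it is contained in each $\phi^{-j}K$ and hence in the intersection $K_{-(n-1)}$. With this in hand the argument is a routine diagram chase; no compactness or topology is needed beyond what is already built into the hypotheses.
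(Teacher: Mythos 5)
Your proof is correct and rests on the same two facts as the paper's own argument --- that $\bar\phi\circ\pi=\pi\circ\phi$ and that $H\le K$ lets one replace any representative of a coset in $\pi K$ by one lying in $K$ --- so it is essentially the same proof; the paper just runs the chain of equivalences $xH\in(\pi K)_{-n}\iff \phi^i(x)\in K$ for all $i=0,\dots,n\iff x\in K_{-n}$ directly, for all $i$ at once, instead of packaging it as an induction on $n$. The only blemish is the dangling clause ``pick $h'\in H$ with $\phi(h')\in\phi(H)\subseteq H$'', which does no work and should be deleted in favour of the correct justification you give immediately afterwards, namely $H\le K_{-(n-1)}$ because $H$ is $\phi$-invariant and $H\le K$.
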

\begin{proof}
Let $xH\in G/H$. Then, $xH\in (\pi K)_{-n}=\pi K\cap \bar\phi^{-1}(\pi K)\cap\ldots\cap \bar\phi^{-n}(\pi K)$ if and only if $\phi^i(x)H=\bar\phi^i(xH)\in \pi K=K/H$ for all $i=0,\dots,n$. This means that $\phi^i(x)\in K$ for all $i=0,\dots,n$, that is, $x\in K_{-n}$; equivalently, since $H\leq K$, $\pi x=xH\in \pi(K_{-n})$.
\end{proof}

The next proposition shows in particular that the superior limits in \eqref{def1} and \eqref{def2} are limits; item (1) was already proved in \cite[Proposition 4.5.3]{DG-islam}.
Let $\log\N_{>0}=\{\log n:n\in\N_{>0}\}$.

\begin{proposition}\label{lim}
Let $G$ be a \tdlc group, $\phi\in\End(G)$ and $H$ a compact $\phi$-invariant subgroup of $G$. Then:
\begin{enumerate}[\rm (1)]
\item $H_{top}( \phi, K)=\lim_{n\to\infty}\frac{1}{n}\log [K:K_{-n}]\in\log\N_{>0}$, for all $K\in\B(G)$;
\item $H_{top}(\bar \phi,\pi K)=H_{top}( \phi, K)=\lim_{n\to\infty}\frac{1}{n}\log [K:K_{-n}]\in\log\N_{>0}$, for all $K\in\B(G,H)$.
\end{enumerate}
\end{proposition}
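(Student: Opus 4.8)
The plan is to prove both items by reducing the $\limsup$ in \eqref{def1}/\eqref{def2} to an honest limit via the submultiplicativity encoded in Lemma \ref{basic_cot}, and then identifying the measure-theoretic quantity with an index via the Haar measure formulas \eqref{meas_quot_eq1} and \eqref{meas_quot_eq}.

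For item (1), fix $K\in\B(G)$. Since $K$ is compact and $K_{-n}$ is open in $K$, formula \eqref{meas_quot_eq1} gives $\mu(K)=[K:K_{-n}]\mu(K_{-n})$, so $-\log\mu(K_{-n})=\log[K:K_{-n}]-\log\mu(K)$; plugging this into \eqref{def1} shows $H_{top}(\phi,K)=\limsup_n\frac1n\log[K:K_{-n}]$, with the additive constant $\log\mu(K)$ washing out in the limit. It remains to upgrade the $\limsup$ to a limit and to locate the value in $\log\N_{>0}$. Write $c_n=[K:K_{-n}]$ and $\alpha_n=[K_{-n}:K_{-n-1}]$ as in Lemma \ref{basic_cot}; then $c_n=\prod_{j=0}^{n-1}\alpha_j$ by Lemma \ref{basic_cot}(1), the sequence $(\alpha_n)$ is non-increasing (Lemma \ref{basic_cot}(2)) and eventually constant (Lemma \ref{basic_cot}(3)), say equal to $\alpha_\infty\in\N_{>0}$ for all $n\geq N$. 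Hence $\frac1n\log c_n=\frac1n\sum_{j=0}^{n-1}\log\alpha_j\to\log\alpha_\infty$ as $n\to\infty$ (the first $N$ terms contribute $O(1/n)$, the rest average to $\log\alpha_\infty$). This establishes that the $\limsup$ is in fact the limit and that $H_{top}(\phi,K)=\log\alpha_\infty\in\log\N_{>0}$.

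For item (2), fix $K\in\B(G,H)$, so $H\leq K$. The measure $\bar\mu$ of Lemma \ref{Haar_sul_quot} satisfies \eqref{meas_quot_eq}, which applied to the compact subgroup $K$ and its relatively open subgroup $K_{-n}$ (note $H\leq K_{-n}$ by Lemma \ref{leqU+}, since $\phi H\le H$ and $H\le K$) gives $\bar\mu(\pi K)=[K:K_{-n}]\,\bar\mu(\pi(K_{-n}))$. By Lemma \ref{piC}, $\pi(K_{-n})=(\pi K)_{-n}$, so $\bar\mu((\pi K)_{-n})=\bar\mu(\pi K)/[K:K_{-n}]$. Substituting into \eqref{def2} and cancelling the constant $\log\bar\mu(\pi K)$ in the limit yields $H_{top}(\bar\phi,\pi K)=\limsup_n\frac1n\log[K:K_{-n}]$, which by part (1) equals $H_{top}(\phi,K)=\lim_n\frac1n\log[K:K_{-n}]\in\log\N_{>0}$.

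The only genuine point requiring care — the ``main obstacle,'' though a mild one — is the passage from $\limsup$ to $\lim$: one must invoke the eventual stabilization of $(\alpha_n)$ from Lemma \ref{basic_cot}(3) together with the telescoping identity $c_n=\prod_{j<n}\alpha_j$, and check the elementary Cesàro-type estimate that an eventually constant sequence has running averages converging to its eventual value. Everything else is bookkeeping with the index and Haar-measure identities already available; in particular one should note explicitly that $H\leq K_{-n}$ so that \eqref{meas_quot_eq} legitimately applies with $K_{-n}$ in the role of the relatively open subgroup containing $H$.
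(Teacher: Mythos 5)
Your proposal is correct and follows essentially the same route as the paper: in both, \eqref{meas_quot_eq1} and \eqref{meas_quot_eq} (together with Lemma \ref{piC} and the observation $H\leq K_{-n}$ from Lemma \ref{leqU+}) convert the measure-theoretic $\limsup$ into $\limsup_n\frac{1}{n}\log[K:K_{-n}]$, and the stabilization of $(\alpha_n)$ from Lemma \ref{basic_cot} upgrades this to a genuine limit equal to $\log\alpha_\infty\in\log\N_{>0}$. The paper phrases the last step via $c_n=\alpha^{n-m}c_m$ rather than your Ces\`aro averaging of $\log\alpha_j$, but this is only a cosmetic difference.
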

\begin{proof}
(1) Let $K\in\B(G)$. By \eqref{meas_quot_eq1}, $\mu(K)=[K:K_{-n}]\mu(K_{-n})$ for every $n\in\N$, and hence by \eqref{def1}
$$H_{top}( \phi, K)=\limsup_{n\to \infty}-\frac{\log\mu(K) -\log [K:K_{-n}]}{n}=\limsup_{n\to\infty}\frac{\log [K:K_{-n}]}{n}.$$
For all $n\in \N$, let $c_n=[K:K_{-n}]$ and $\alpha_n=[K_{-n}:K_{-n-1}]$. By Lemma \ref{basic_cot}(3), the sequence $(\alpha_n)_{n\in\N}$ stabilizes, that is, there exists $n\in\N$ such that $\alpha_n=\alpha_{m}=:\alpha$ for all $n\geq m$. Therefore, for every $n\geq m$, by Lemma \ref{basic_cot}(1) we have that $\alpha=c_{n+1}/c_n$, hence $c_n=\alpha^{n-m}c_m$. So, the sequence $(\log c_n/n)_{\N}$ converges to $\log\alpha$, and
by the first part of the proof we conclude that
\begin{equation}\label{logalpha}
H_{top}(\phi,K)=\lim_{n\to\infty}\frac{\log [K:K_{-n}]}{n}=\log \alpha\,.
\end{equation}

\smallskip\noindent
(2) Let $K\in \B(G,H)$. By Lemma \ref{leqU+}(1), $H\leq K_{-n}$ for every $n\in\N$. Using \eqref{meas_quot_eq} and Lemma \ref{piC}, we obtain that 
$$\bar\mu (\pi K)=[K:K_{-n}]\bar\mu((\pi K)_{-n})\,$$ for every $n\in\N$,
so by \eqref{def2}
$$H_{top}(\bar \phi,\pi K)=\limsup_{n\to\infty}-\frac{\log \bar\mu (\pi K) -\log [K:K_{-n}]}{n}=\limsup_{n\to\infty}\frac{\log [K:K_{-n}]}{n}\,;$$
in particular, $H_{top}(\bar \phi,\pi K)=H_{top}( \phi, K)$.
\end{proof}

As a consequence, we obtain the monotonicity of the topological entropy under taking quotients over compact $\phi$-invariant subgroups:
$$h_{top}(\phi)=\sup\{H_{top}(\phi,K): K\in \B(G)\}\geq \sup\{H_{top}(\phi,K): K\in \B(G,H)\}=h_{top}(\bar \phi)\,.$$
Similarly, the topological entropy is monotone under taking closed (not necessarily compact) $\phi$-invariant subgroups:

\begin{lemma}\label{traj_sub}
Let $G$ be a \tdlc group, $\phi\in\End(G)$ and $G'$ a closed $\phi$-invariant subgroup of $G$. Then:
\begin{enumerate}[\rm (1)]
\item for $U\in \B(G)$ and $n\in\N$, $U_{-n}\cap G'=(U\cap G')_{-n}$;
\item $h_{top}(\phi)\geq h_{top}(\phi\restriction_{G'})$. 
\end{enumerate}
\end{lemma}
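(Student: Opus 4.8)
The two parts go hand in hand: once part (1) is known, part (2) follows by comparing indices along cotrajectories. Let me sketch each.

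For part (1), fix $U\in\B(G)$ and $n\in\N$. I would argue by a direct set-theoretic computation. An element $x\in G'$ lies in $U_{-n}\cap G'$ precisely when $x\in G'$ and $\phi^i(x)\in U$ for all $i=0,\dots,n$. Since $G'$ is $\phi$-invariant, $\phi^i(x)\in G'$ automatically for $x\in G'$, so the condition $\phi^i(x)\in U$ is equivalent to $\phi^i(x)\in U\cap G'$; noting also that $\phi\restriction_{G'}$ has $(\phi\restriction_{G'})^{-i}(U\cap G') = \phi^{-i}(U\cap G')\cap G'$, this says exactly $x\in (U\cap G')_{-n}$ where the cotrajectory is taken inside $G'$ with respect to $\phi\restriction_{G'}$. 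This is the same bookkeeping already used in the proof of Lemma \ref{piC}, just with a subgroup in place of a quotient, so no difficulty is expected here.

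For part (2), I would use the formula from Proposition \ref{lim}(1) applied to $G$ and, separately, to $G'$ (both are \tdlc, and $\phi\restriction_{G'}\in\End(G')$ since $G'$ is closed and $\phi$-invariant). By Corollary \ref{base_sub}(1), $\B(G')=\{U\cap G':U\in\B(G)\}$, so it suffices to show $H_{top}(\phi,U)\ge H_{top}(\phi\restriction_{G'},U\cap G')$ for every $U\in\B(G)$, and then take suprema. Fix such a $U$ and write $V=U\cap G'\in\B(G')$. By part (1), $V_{-n}=U_{-n}\cap G'$ for all $n$. Now apply Lemma \ref{gi}(3) with $K=U_{-n}$, $H=U_{-n-1}$, $L=G'$: this gives
\[
[U_{-n}:U_{-n-1}]\ \ge\ [U_{-n}\cap G':U_{-n-1}\cap G']\ =\ [V_{-n}:V_{-n-1}]\,.
\]
Since $[U:U_{-n}]=\prod_{j=0}^{n-1}[U_{-j}:U_{-j-1}]$ (Lemma \ref{gi}(1), telescoping) and likewise for $V$, term-by-term comparison yields $[U:U_{-n}]\ge[V:V_{-n}]$ for all $n$; taking $\tfrac1n\log(\cdot)$ and passing to the limit (the limits exist by Proposition \ref{lim}(1)) gives $H_{top}(\phi,U)\ge H_{top}(\phi\restriction_{G'},V)$. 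Taking the supremum over $U\in\B(G)$ and using that $\{U\cap G':U\in\B(G)\}$ exhausts $\B(G')$ gives $h_{top}(\phi)\ge h_{top}(\phi\restriction_{G'})$, as desired.

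No serious obstacle is anticipated; the only point requiring a little care is to make sure the cotrajectory $V_{-n}$ in $G'$ (formed with $\phi\restriction_{G'}$) genuinely coincides with $U_{-n}\cap G'$ — this is exactly part (1) — and that the index-comparison lemma Lemma \ref{gi}(3) is applied in the right direction. If one prefers to avoid invoking Proposition \ref{lim} and work directly from the measure-theoretic definition \eqref{def1}, the same monotonicity $[U:U_{-n}]\ge[V:V_{-n}]$ still drives the inequality, since $\mu(U_{-n})=\mu(U)/[U:U_{-n}]$ and $\mu'(V_{-n})=\mu'(V)/[V:V_{-n}]$ for Haar measures $\mu,\mu'$ on $G,G'$; then $-\tfrac1n\log\mu(U_{-n})\ge -\tfrac1n\log\mu'(V_{-n})+O(1/n)$ and one passes to the $\limsup$.
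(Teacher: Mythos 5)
Your proposal is correct and follows essentially the same route as the paper: part (1) is the same direct set-theoretic verification, and part (2) combines Corollary \ref{base_sub}(1), part (1), the index inequality of Lemma \ref{gi}(3) and Proposition \ref{lim}. The only cosmetic difference is that the paper applies Lemma \ref{gi}(3) once to the pair $U_{-n}\leq U$ to get $[U:U_{-n}]\geq[U\cap G':U_{-n}\cap G']$ directly, whereas you obtain the same inequality by comparing consecutive quotients and telescoping.
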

\begin{proof}
\noindent
(1) Clearly, $(U\cap G')_{-n}\leq U_{-n}\cap G'$. On the other hand, let $x\in U_{-n}\cap G'$, that is, $\phi^{i}(x)\in U$ for all $i=0,\dots,n$ and $x\in G'$. Since $G'$ is $\phi$-invariant, $\phi^{i}(x)\in G'$ for all $i\in \N$, so that $\phi^i(x)\in U\cap G'$ for all $i=0,\dots,n$. Hence, $x\in (U\cap G')_{-n}$. 

\noindent
(2) By Corollary \ref{base_sub}(1), $\B(G')=\{U\cap G':U\in \B(G)\}$.
By item (1), Lemma \ref{gi}(2) and Proposition \ref{lim}(2),
\begin{align*}
H_{top}(\phi\restriction_{G'},U\cap G')&=\lim_{n\to\infty}\frac{[U\cap G':(U\cap G')_{-n}]}{n}= \lim_{n\to\infty}\frac{[U\cap G':U_{-n}\cap G']}{n}\\
&\leq \lim_{n\to\infty}\frac{[U:U_{-n}]}{n}=H_{top}(\phi,U)\,,\end{align*}
for all $U\in \B(G)$.
Hence, $$h_{top}(\phi)=\sup\{H_{top}(\phi,U): U\in \B(G)\}
\geq \sup\{H_{top}(\phi\restriction_{G'},U): U\in \B(G')\}=h_{top}(\phi\restriction_{G'})\,. \qedhere$$
\end{proof}

Let us state the following useful properties of the topological entropy in the case of groups. Notice that these are direct consequences of the more general Lemma \ref{antim-eq}.

\begin{lemma}\label{antim}\label{basesuff}
Let $G$ be a \tdlc group, $\phi\in\End(G)$, $H$ a compact $\phi$-invariant subgroup of $G$ and $\bar\phi:G/H\to G/H$ the map induced by $\phi$.
\begin{enumerate}[\rm (1)]
\item If $U,\, V\in\mathcal B(G)$ and $U\leq V$, then $H_{top}(\phi,V)\leq H_{top}(\phi,U)$. 
\item If $\B\subseteq \B(G)$ is a base for the neighborhoods of $1$ in $G$, then $h_{top}(\phi)=\sup\{H_{top}(\phi,U):U\in\mathcal B\}$.
\item If $\B\subseteq \B(G,H)$ is a base for the neighborhoods of $H$ in $G$, then $h_{top}(\bar \phi)=\sup\{H_{top}(\phi,U):U\in\mathcal B\}$.
\end{enumerate}
\end{lemma}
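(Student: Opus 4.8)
The plan is to deduce all three items from Lemma~\ref{antim-eq} by means of the dictionary between subgroups and entourages recalled in \S\ref{tdlcq} and \S\ref{htop-sec}. Write $U_K=\{(x,y):y^{-1}x\in K\}$ for $K\in\B(G)$ and $\bar U_K=\{(xH,yH):y^{-1}x\in K\}$ for $K\in\B(G,H)$. Recall that $\mathcal V=\{U_K:K\in\B(G)\}$ is a fundamental system of entourages of $\U$ and that $\{\bar U_K:K\in\B(G,H)\}$ is a fundamental system of entourages of $\bar\U$ (by Lemma~\ref{base_sub}(3)); that $H_{top}(\phi,K)=k(\phi,U_K,1)$ and $H_{top}(\bar\phi,\pi K)=k(\bar\phi,\bar U_K,H)$ by \eqref{def1} and \eqref{def2}; and that $H_{top}(\bar\phi,\pi K)=H_{top}(\phi,K)$ for every $K\in\B(G,H)$ by Proposition~\ref{lim}(2). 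The single elementary fact doing the work is that $K\mapsto U_K$ and $K\mapsto\bar U_K$ are order preserving, i.e.\ $K_1\leq K_2$ forces $U_{K_1}\subseteq U_{K_2}$ (respectively $\bar U_{K_1}\subseteq\bar U_{K_2}$).

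For item~(1): given $U\leq V$ in $\B(G)$ we get $U_U\subseteq U_V$ as entourages of $\U$, so Lemma~\ref{antim-eq}(1), applied at the point $x=1$, gives $H_{top}(\phi,V)=k(\phi,U_V,1)\leq k(\phi,U_U,1)=H_{top}(\phi,U)$. For item~(2): if $\B\subseteq\B(G)$ is a base for the neighborhoods of $1$ in $G$, then for every $K'\in\B(G)$, which is a neighborhood of $1$, there is $K\in\B$ with $K\leq K'$, hence $U_K\subseteq U_{K'}$; thus $\{U_K:K\in\B\}$ is cofinal for reverse inclusion in $\mathcal V$, so it is itself a fundamental system of entourages of $\U$ contained in $\mathcal V$. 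Lemma~\ref{antim-eq}(2), applied to $\phi$ with this subsystem and with $x=1$, then yields $h_{top}(\phi)=\sup\{k(\phi,U_K,1):K\in\B\}=\sup\{H_{top}(\phi,K):K\in\B\}$.

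For item~(3): I would repeat the argument of~(2) verbatim on the uniform space $(G/H,\bar\U)$. If $\B\subseteq\B(G,H)$ is a base for the neighborhoods of $H$ in $G$, then each $K'\in\B(G,H)$ is a neighborhood of $H$, so there is $K\in\B$ with $K\leq K'$, whence $\bar U_K\subseteq\bar U_{K'}$; therefore $\{\bar U_K:K\in\B\}$ is cofinal in, hence a fundamental system of entourages contained in, $\{\bar U_K:K\in\B(G,H)\}$. Applying Lemma~\ref{antim-eq}(2) to $\bar\phi$ at the point $H$ then gives $h_{top}(\bar\phi)=\sup\{k(\bar\phi,\bar U_K,H):K\in\B\}=\sup\{H_{top}(\bar\phi,\pi K):K\in\B\}$, and rewriting each term via Proposition~\ref{lim}(2) concludes. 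There is no genuine obstacle here: the only point deserving a line of justification is that a base of neighborhoods (of $1$ in $G$, or of $H$ in $G$) yields a \emph{cofinal}, hence fundamental, subfamily of entourages, which is precisely the order preservation of $K\mapsto U_K$ together with the fact that $\B(G)$ and $\B(G,H)$ already furnish fundamental systems of entourages.
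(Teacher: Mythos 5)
Your proof is correct and follows exactly the route the paper intends: the paper gives no written proof of this lemma, merely noting that the three items are ``direct consequences of the more general Lemma~\ref{antim-eq}'', and your argument is precisely the elaboration of that remark via the dictionary $K\mapsto U_K$, $K\mapsto\bar U_K$ together with Proposition~\ref{lim}(2) for item~(3).
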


The following corollary follows from Lemma \ref{basesuff}(2,3) and Corollary \ref{lastcor}(1,2).

\begin{corollary}\label{basebis}
Let $G$ be a \tdlc group, $\phi\in\End(G)$, $H$ a compact $\phi$-invariant subgroup of $G$ and $\bar\phi:G/H\to G/H$ the map induced by $\phi$. 
Then $h_{top}(\phi)=\sup\{H_{top}(\phi,U):U\in\mathcal B(G),\ H\leq N_G(U)\}$ 
and $h_{top}(\bar\phi)=\sup\{H_{top}(\phi,U):U\in\mathcal B(G,H),\ H\leq N_G(U)\}$.
\end{corollary}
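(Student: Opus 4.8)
The plan is to deduce both equalities from Lemma~\ref{basesuff}(2,3) by producing, in each case, a base of neighborhoods (of $1$, respectively of $H$) that lies inside the relevant indexing family and consists of subgroups normalized by $H$; once such a base is in hand, the statement is immediate.

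For the first equality I would apply Corollary~\ref{lastcor}(1) with $C=H$ (legitimate since $H$ is compact): it says exactly that $\B_1:=\{U\in\B(G):H\leq N_G(U)\}$ is a base for the neighborhoods of $1$ in $G$. As $\B_1\subseteq\B(G)$, Lemma~\ref{basesuff}(2) applies and gives $h_{top}(\phi)=\sup\{H_{top}(\phi,U):U\in\B_1\}$, which is the first assertion.

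For the second equality set $\B_2:=\{U\in\B(G,H):H\leq N_G(U)\}$. Since $\B_2\subseteq\B(G,H)$ by definition, it suffices by Lemma~\ref{basesuff}(3) to check that $\B_2$ is a base for the neighborhoods of $H$ in $G$. Here I would invoke Corollary~\ref{lastcor}(2) with $C=H$: the family $\B'=\{HU:U\in\B(G),\ H\leq N_G(U)\}$ is a base for the neighborhoods of $H$. For each such $U$, the set $HU$ is a subgroup (because $H$ normalizes $U$), it is compact and open, it contains $H$, and it is normalized by $H$ (as $h(HU)h^{-1}=(hHh^{-1})(hUh^{-1})=HU$ for all $h\in H$); hence $HU\in\B_2$, i.e.\ $\B'\subseteq\B_2$. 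Every member of $\B_2$ is an open set containing $H$, hence a neighborhood of $H$; a family of neighborhoods of $H$ that contains a neighborhood base is itself a neighborhood base, so $\B_2$ is a base for the neighborhoods of $H$ in $G$. Lemma~\ref{basesuff}(3) then yields $h_{top}(\bar\phi)=\sup\{H_{top}(\phi,U):U\in\B_2\}$.

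The whole argument is essentially bookkeeping; the only step deserving a moment's attention is the last one — that enlarging a neighborhood base within the set of all neighborhoods of $H$ still gives a neighborhood base — and this, together with the appeal to Corollary~\ref{lastcor}, is where the compactness of $H$ enters, through the availability of the bases $\B_1$ and $\B'$.
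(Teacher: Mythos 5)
Your proposal is correct and follows exactly the route the paper indicates: the paper derives this corollary precisely from Lemma~\ref{basesuff}(2,3) together with Corollary~\ref{lastcor}(1,2), and your extra observation that $\B'\subseteq\B_2$ forces $\B_2$ to be a neighborhood base of $H$ is the (routine) detail the paper leaves implicit.
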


\subsection{The Limit Free Formula}\label{lf-sec}

The aim of this subsection is to prove in Proposition \ref{limit_free} a formula for the computation of the topological entropy avoiding the limit in the definition (hence, the name Limit Free Formula). 

\begin{remark}\label{remark_W_traj}
When $\phi:G\to G$ is a topological automorphism of a \tdlc group $G$, one of the main ingredients used in \cite{AGB} was the full cotrajectory $C(\phi^{-1},U)=\bigcap_{n=0}^{\infty}\phi^n U$ of the inverse $\phi^{-1}$ of $\phi$. When $\phi$ is a continuous endomorphism we need to substitute $C(\phi^{-1},U)$ and $C_n(\phi^{-1},U)$ by the smaller subgroups $U_+$ and $U_n$ (see Definition \ref{newU+}).
\end{remark}

\begin{proposition}\label{limit_free}
Let $G$ be a \tdlc group, $\phi\in\End(G)$ and $U\in \B(G)$. Then
$$H_{top}(\phi,U)=\log[\phi U_+:U_+]\,.$$
\end{proposition}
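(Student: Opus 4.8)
The plan is to compute the limit defining $H_{top}(\phi,U)$ by showing that the eventual value of the stabilizing sequence $\alpha_n=[U_{-n}:U_{-n-1}]$ equals $[\phi U_+:U_+]$. By Proposition~\ref{lim}(1) and its proof (equation~\eqref{logalpha}), there is $m\in\N$ such that $\alpha_n=\alpha$ for all $n\geq m$ and $H_{top}(\phi,U)=\log\alpha$; so it suffices to prove $\alpha=[\phi U_+:U_+]$. The bridge to the subgroups $U_n$ is Lemma~\ref{magia_di_willis}(2), which gives $\alpha_n=[U_{-n}:U_{-n-1}]=[\phi U_n:U_{n+1}]$. Hence I must show that the sequence $\beta_n:=[\phi U_n:U_{n+1}]$ stabilizes to $[\phi U_+:U_+]$.

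\textbf{Key steps.} First I would record that $U_n\searrow U_+$ is a decreasing chain of compact subgroups with intersection $U_+$, and likewise $\phi U_n\searrow \phi U_+$ (applying $\phi$ commutes with the relevant nested intersections here because the $U_n$ are compact: $\phi\bigl(\bigcap_n U_n\bigr)=\bigcap_n\phi U_n$, using compactness to pass $\phi$ through the intersection). Note $U_{n+1}=U\cap\phi U_n\leq\phi U_n$ and $U_+=U\cap\phi U_+\leq\phi U_+$ by Lemma~\ref{basic_willis}(3), so all the indices $\beta_n$ and $[\phi U_+:U_+]$ are finite (the smaller group is open in the larger compact one). Second, I would argue that for $n$ large enough $\beta_n$ is constant: since $\beta_n=\alpha_n$ and $(\alpha_n)$ stabilizes by Lemma~\ref{basic_cot}(3), we get $\beta_n=\alpha$ for all $n\geq m$. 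Third — the crux — I would identify this eventual value with $[\phi U_+:U_+]$. Fix $n\geq m$. The quotient $\phi U_+/U_+$ injects into $\phi U_n/U_{n+1}$ via the map $x U_+\mapsto x U_{n+1}$ (well-defined since $U_+\leq U_{n+1}$, and $\phi U_+\leq\phi U_n$); I would show this map is also \emph{surjective} for $n$ large, which forces $[\phi U_+:U_+]\geq[\phi U_n:U_{n+1}]=\alpha$, while the reverse inequality $[\phi U_+:U_+]\leq\alpha$ comes for free from injectivity of the same map together with the finiteness of $\alpha$. Combining gives $[\phi U_+:U_+]=\alpha=H_{top}(\phi,U)/\!\log$, i.e.\ $H_{top}(\phi,U)=\log[\phi U_+:U_+]$.

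\textbf{Main obstacle.} The delicate point is the surjectivity of $\phi U_+/U_+\to\phi U_n/U_{n+1}$ for large $n$, equivalently the claim $\phi U_n=(\phi U_+)U_{n+1}$ eventually. One clean way: from $\beta_n=\beta_{n+1}=\cdots=\alpha$ and the tower of quotient maps $\phi U_n/U_{n+1}\to \phi U_{n}/U_{n+2}\to\cdots$ one extracts that $\bigcap_{k\geq n}(\phi U_+)U_k=\phi U_+$ stably, but the genuinely careful argument is a compactness/inverse-limit argument: an element of $\phi U_n$ has, for each $k\geq n$, a representative modulo $U_{k+1}$ coming (by a diagram chase using Lemma~\ref{magia_di_willis} and Lemma~\ref{basic_willis}) from a coherent sequence $(u_j)$ with $\phi(u_{j+1})=u_j$, and by the characterization in Lemma~\ref{basic_willis}(2) such a coherent sequence lands in $U_+$; compactness of the $U_j$ guarantees the coherent sequence exists. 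So the real work is setting up this coherent-sequence extraction cleanly and using compactness to conclude the limit sits in $U_+$, rather than any surprising structural fact. Everything else is bookkeeping with the lemmas already proved.
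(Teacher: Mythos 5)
Your proposal is correct and follows essentially the same route as the paper: both reduce, via Lemma \ref{magia_di_willis}(2) and the stabilization from Lemma \ref{basic_cot}, to identifying the eventual value of $[\phi U_n:U_{n+1}]$ with $[\phi U_+:U_+]$, and both rely on a compactness argument to pass $\phi$ and products through the nested intersection of the $U_n$. For the one step you leave sketchy (eventual surjectivity, i.e.\ $\phi U_n=(\phi U_+)U_{n+1}$), the paper's formulation is the clean way to finish: the sets $\phi U_n\cdot U$ form a decreasing chain of finite unions of cosets of $U$ whose indices $[\phi U_n\cdot U:U]$ are non-increasing positive integers, hence the sets themselves stabilize, and by compactness $\bigcap_n(\phi U_n\cdot U)=\bigl(\bigcap_n\phi U_n\bigr)\cdot U=\phi U_+\cdot U$, which yields exactly the surjectivity you need.
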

\begin{proof}
By Lemma \ref{basic_cot} there exist $n_0\in \N$ and $\alpha>0$ such that $\alpha_n=\alpha$ for any positive integer $n>n_0$, and $H_{top}(\phi,U)=\log\alpha$ by \eqref{logalpha}. Hence, it suffices to prove that
\begin{equation}\label{claim_limit_free_proof}\log[\phi U_+:U_+]=\log\alpha\,.\end{equation}
Since $U_+=U\cap \phi U_+$ by Lemma \ref{basic_willis}(3), and using Lemma \ref{gi}(2),
$$[\phi U_+:U_+]=[\phi U_+:U\cap \phi U_+]=[\phi U_+ \cdot U:U]\,.$$
Now, both $U$ and $\phi U$ are compact, so $\phi U\cdot U$ is compact as well. Thus, $[\phi U\cdot U:U]$ is finite, $U$ being open. Consequently, the sequence $\{[\phi U_n \cdot U:U]:n\in \N\}$ is a non-increasing sequence of positive integers bounded above by $[\phi U_0\cdot U:U]=[\phi U\cdot U:U]$. Therefore, this sequence stabilizes, so there exists $n_1\in\N$ such that 
\begin{equation}\label{stab_eq}\phi U_n \cdot U=\phi U_{n_1} \cdot U\ \ \text{for all $n\geq n_1$.}\end{equation}
Thus, for all $m\geq n_1$,
$$\phi U_m \cdot U=\bigcap_{n=0}^{\infty}(\phi U_{n} \cdot U)=\left(\bigcap_{n=0}^{\infty}\phi U_{n}\right) \cdot U=\phi\left(\bigcap_{n=0}^{\infty} U_{n}\right) \cdot U=\phi U_+\cdot U\,,$$
where the above equalities follow respectively by \eqref{stab_eq}, \cite[Lemma 2.3]{AGB}, and \cite[Lemma 1]{Willis_endo}. Choose now a positive integer $n\geq \max\{n_0,n_1\}$, then by Lemma \ref{gi}(2),
\begin{equation*}\begin{split}
[\phi U_+:U_+]=[\phi U_+:U\cap\phi U_+]=[\phi U_+\cdot U:U]=[\phi U_n\cdot U:U]=\\=[\phi U_n:U\cap \phi U_n]=[\phi U_{n}:U_{n+1}]=[U_{-n}:U_{-n-1}]=\alpha\,,
\end{split}\end{equation*}
where the penultimate equality comes from Lemma \ref{magia_di_willis}(2). This concludes the proof of \eqref{claim_limit_free_proof}.
\end{proof}

The following corollary is an immediate consequence of the above proposition:

\begin{corollary}\label{coro_lim_free}
Let $G$ be a \tdlc group, $\phi\in\End(G)$ and let $H$ be a compact $\phi$-invariant subgroup of $G$. Then
$$h_{top}(\bar \phi)=\sup\{\log[\phi U_+:U_+]: U\in \B(G,H)\}\,,$$
where $\bar \phi:G/H\to G/H$ is the map induced by $\phi$.
\end{corollary}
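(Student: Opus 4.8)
The plan is to derive Corollary \ref{coro_lim_free} by combining the two results immediately preceding it: the formula
$$h_{top}(\bar\phi)=\sup\{H_{top}(\phi,U):U\in\B(G,H)\}$$
established in \S\ref{htop-sec} (via Proposition \ref{lim}(2)), and the Limit Free Formula of Proposition \ref{limit_free}, which asserts that $H_{top}(\phi,U)=\log[\phi U_+:U_+]$ for every $U\in\B(G)$. First I would note that $\B(G,H)\subseteq\B(G)$, so Proposition \ref{limit_free} applies verbatim to each $U\in\B(G,H)$: indeed the construction of $U_+$ in Definition \ref{newU+} depends only on $\phi$ and $U$, not on $H$. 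Substituting $\log[\phi U_+:U_+]$ for $H_{top}(\phi,U)$ inside the supremum then yields exactly the claimed identity
$$h_{top}(\bar\phi)=\sup\{\log[\phi U_+:U_+]:U\in\B(G,H)\}.$$

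A minor point worth spelling out is why the supremum is genuinely indexed by a nonempty set and why each term is finite and well-defined: since $H$ is compact, $\B(G,H)$ is nonempty and in fact a base for the neighborhoods of $H$ in $G$ by Corollary \ref{base_sub}(3); and for each such $U$, the index $[\phi U_+:U_+]$ is finite because $U_+$ is compact, $\phi U_+$ is compact, and $U_+=U\cap\phi U_+$ is open in $\phi U_+$ (this is exactly the remark made right after Lemma \ref{basic_willis}). So every term $\log[\phi U_+:U_+]$ lies in $\log\N_{>0}$, consistent with Proposition \ref{lim}(2).

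Honestly, I do not anticipate a real obstacle here — the statement is flagged as "an immediate consequence" for good reason, and the proof is a one-line substitution. If anything, the only thing to be careful about is not conflating the two different roles that compactness of $H$ plays: it is needed to know that $h_{top}(\bar\phi)$ is defined at all (Lemma \ref{Haar_sul_quot} gives the $\bar\phi$-homogeneous measure on $G/H$) and to know that $\B(G,H)$ is the right index set for the supremum (Lemma \ref{basesuff}(3) together with Corollary \ref{base_sub}(3)), whereas Proposition \ref{limit_free} itself needs no hypothesis on $H$ whatsoever. With those clarifications in place, the proof reads: by the displayed formula for $h_{top}(\bar\phi)$ preceding Proposition \ref{lim}, $h_{top}(\bar\phi)=\sup\{H_{top}(\phi,U):U\in\B(G,H)\}$, and applying Proposition \ref{limit_free} to each $U\in\B(G,H)\subseteq\B(G)$ replaces $H_{top}(\phi,U)$ by $\log[\phi U_+:U_+]$, giving the result.
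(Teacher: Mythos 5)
Your proof is correct and is exactly the argument the paper intends: the paper presents this corollary as an immediate consequence of Proposition \ref{limit_free} combined with the formula $h_{top}(\bar\phi)=\sup\{H_{top}(\phi,U):U\in\B(G,H)\}$ from \S\ref{htop-sec} and Proposition \ref{lim}(2). Your additional remarks on nonemptiness of $\B(G,H)$ and finiteness of $[\phi U_+:U_+]$ are accurate but not needed beyond what the paper already records.
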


If $H$ is $\phi$-stable, the above formula can be improved as follows:

\begin{proposition}\label{phiN}
Let $G$ be a \tdlc group, $\phi\in\End(G)$ and $H$ a compact $\phi$-stable subgroup of $G$. Then
$$h_{top}(\bar \phi)=\sup\{\log[\phi M:M]: H\leq M\leq G,\ M\ \text{compact},\ M\leq \phi M,\ [\phi M:M]<\infty\}=:s\,,$$
where $\bar \phi:G/H\to G/H$ is the map induced by $\phi$.
\end{proposition}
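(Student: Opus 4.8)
The plan is to establish the two inequalities $h_{top}(\bar\phi)\leq s$ and $s\leq h_{top}(\bar\phi)$ separately, in both cases working through the Limit Free description of the entropy furnished by Corollary \ref{coro_lim_free}, namely $h_{top}(\bar\phi)=\sup\{\log[\phi U_+:U_+]:U\in\B(G,H)\}$.

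For $h_{top}(\bar\phi)\leq s$, I would observe that for every $U\in\B(G,H)$ the subgroup $U_+$ is among those over which the supremum defining $s$ is taken: $U_+$ is compact, $U_+\leq\phi U_+$ with $[\phi U_+:U_+]<\infty$ by Lemma \ref{basic_willis}(3) and the discussion following it, and $H\leq U_+$ by Lemma \ref{leqU+}(2) (here one uses that $H$ is $\phi$-stable, so $H\leq\phi H$, together with $H\leq U$). Hence $\log[\phi U_+:U_+]\leq s$ for every $U\in\B(G,H)$, and taking the supremum gives $h_{top}(\bar\phi)\leq s$. This is precisely the point where $\phi$-stability of $H$, rather than mere $\phi$-invariance, is needed, since it is what forces $H\leq U_+$.

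For the reverse inequality, fix a compact subgroup $M$ with $H\leq M\leq\phi M$ and $[\phi M:M]<\infty$. The crucial observation is that $M$, being a closed subgroup of finite index in the compact group $\phi M$, is open in $\phi M$; hence $M\in\B(\phi M)$, and by Corollary \ref{base_sub}(1) there exists $U\in\B(G)$ with $U\cap\phi M=M$. Then $M=U\cap\phi M\leq U$, so $U\in\B(G,H)$ because $H\leq M$. By Lemma \ref{leqU+}(2) applied to the subgroup $M$ (using $M\leq\phi M$ and $M\leq U$) one gets $M\leq U_+$, hence $\phi M\subseteq\phi U_+$; combining this with the relation $U_+=U\cap\phi U_+$ from Lemma \ref{basic_willis}(3), with $U\cap\phi M=M$, and with Lemma \ref{gi}(2),
$$[\phi M:M]=[\phi M\cdot U:U]\leq[\phi U_+\cdot U:U]=[\phi U_+:U_+].$$
By Corollary \ref{coro_lim_free} the right-hand side is $\leq h_{top}(\bar\phi)$, so $\log[\phi M:M]\leq h_{top}(\bar\phi)$, and taking the supremum over all admissible $M$ gives $s\leq h_{top}(\bar\phi)$.

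The only step demanding real care is the production of $U\in\B(G)$ with $U\cap\phi M=M$; it rests on the elementary fact that a finite-index closed subgroup of a compact group is open, which places $M$ in $\B(\phi M)$ and lets the description of $\B(\phi M)$ in Corollary \ref{base_sub}(1) do the rest. The remaining steps are routine index manipulations together with the properties of the subgroups $U_+$ collected in Section \ref{bg}.
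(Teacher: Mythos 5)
Your proof is correct and follows essentially the same route as the paper: both inequalities are derived from Corollary \ref{coro_lim_free}, with the upper bound using $H\leq U_+$ via Lemma \ref{leqU+}(2) and the lower bound resting on producing a $U\in\B(G,H)$ with $U\cap\phi M=M$ so that $M\leq U_+$ and $[\phi M:M]\leq[\phi U_+:U_+]$. The one point where you diverge is in producing that $U$: the paper builds it by hand as $MN$ using Lemma \ref{magic} and Corollary \ref{lastcor}(2) with explicit normalization conditions, whereas you obtain it in one stroke from Corollary \ref{base_sub}(1) applied to the closed subgroup $\phi M$ (noting that $M$, being closed of finite index, is open in $\phi M$) --- a legitimate and slightly cleaner packaging of the same underlying idea.
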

\begin{proof}
By Corollary \ref{coro_lim_free}, $h_{top}(\bar \phi)=\sup\{\log[\phi U_+:U_+]:U\in\mathcal B(G,H)\}$. Since $H$ is $\phi$-stable, $U_+$ contains $H$ for every $U\in\B(G)$ by Lemma \ref{leqU+}(2); moreover, $U_+\leq \phi U_+$ by Lemma \ref{basic_willis}(3) and $[\phi U_+:U_+]$ is finite. Thus, $h_{top}(\bar\phi)\leq s$. 

To prove the converse inequality, let $M$ be a compact subgroup of $G$ such that $H\leq M\leq \phi M$ and $[\phi M:M]$ is finite. Since $M$ is closed in $\phi M$ and $[\phi M:M]$ is finite, $M$ is open in $\phi M$. Consequently, there exists an open subset $U$ of $G$ such that $\phi M\cap U=M$. 
By Corollary \ref{lastcor}(2), there exists $K\in\B(G)$ such that $M\leq N_G(K)$ (so $MK\in\B(G,M)$) and $M\leq MK\subseteq U$. 
By Lemma \ref{magic} there exists $N\in \B(G)$ such that $N\leq K$ and such that $\phi M$ normalizes $K$. Since also $M$ normalizes $N$, we have that $MN\in\B(G)$. Moreover, $M=\phi M\cap U\supseteq \phi M\cap MN\geq M$, hence $M=\phi M \cap MN$. By Lemma \ref{leqU+}(2), $M\leq (MN)_+$,
and so
$$[\phi(MN)_+:(MN)_+]=[\phi(MN)_+:MN\cap \phi(MN)_+]\geq [\phi M:MN\cap \phi M]=[\phi M:M]\,,$$
where the first equality holds since $(MN)_+=MN\cap\phi(MN)_+$ by Lemma \ref{basic_willis}(3), while the inequality uses part (3) of Lemma \ref{gi} as follows:
$$[\phi(MN)_+:MN\cap \phi(MN)_+]\geq [\phi(MN)_+\cap \phi M:MN\cap \phi(MN)_+\cap \phi M]= [\phi M:MN\cap \phi M]\,.$$
By the arbitrariness of $M$ we conclude that $s\leq h_{top}(\bar \phi)$.
\end{proof}

By Corollary \ref{basebis} and Proposition \ref{limit_free}, and since $H\leq N_G(U)$ implies $H\leq N_G(U_+)$ by Lemma \ref{NnormU}(2), we have $h_{top}(\bar\phi)=\sup\{\log[\phi U_+:U_+]:U\in\B(G),\ H\leq N_G(U_+)\}$. So, Proposition \ref{phiN} with $H=\{1\}$ gives
\begin{equation}\label{simplier}
h_{top}(\phi)=\sup\{\log[\phi M:M]: M\leq\phi M\leq G,\ M\ \text{compact},\ [\phi M:M]<\infty,\ H\leq N_G(M)\}\, .
\end{equation}

\subsection{Proof of the Addition Theorem}\label{ATproof}

This section is devoted to the proof of Theorem \ref{AT}, that we divide into four lemmas.  In Lemmas \ref{easy<} and \ref{difficult<} we handle the case where $H$ is a compact (not necessarily normal) subgroup of the \tdlc group $G$. Let us remark that the proof of Lemma \ref{easy<}, establishing the inequality $\geq$ in $(*)$ (see the statement of Theorem \ref{AT}), is almost self-contained. On the other hand, Lemma \ref{difficult<}, proving the converse inequality, relies on Proposition \ref{phiN}, which itself relies on the Limit Free Formula and so, indirectly, on most of the theory developed in Section \ref{bg} and the first part of Section \ref{topentintdlc}. Analogous observations can be done for Lemmas \ref{easy_normal} and \ref{difficult_normal} respectively, in which we handle the case when $H$ is a normal subgroup.

\medskip
Let us now assume that $H$ is a compact subgroup of $G$, let $\phi\in \End(G)$, and let $H$ be $\phi$-invariant. Then by Lemma \ref{base_sub}(1) and Lemma \ref{basesuff}(2,3),
$$h_{top}(\phi\restriction_H)=\sup\{H_{top}(\phi,U\cap H): U\in \B(G)\}\ \text{ and }\ h_{top}(\bar \phi)=\sup\{H_{top}(\phi,U): U\in \B(G,H)\}\,.$$

\begin{lemma}\label{easy<}
Let $G$ be a \tdlc group, $\phi\in\End(G)$ and $H$ a compact $\phi$-invariant subgroup of $G$. Then  
$$h_{top}(\phi)\geq h_{top}(\phi\restriction_H)+h_{top}(\bar\phi)\,,$$
where $\bar\phi:G/H\to G/H$ denotes the endomorphism induced by $\phi$.
\end{lemma}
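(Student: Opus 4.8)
The plan is to reduce everything to a statement about indices of compact open subgroups. Given any $U\in\B(G)$ and any $n\in\N$, I want to relate the index $[U:U_{-n}]$ to the product of the corresponding indices for $\phi\restriction_H$ and for $\bar\phi$. The natural way is to apply the Snake Lemma substitute, namely Lemma~\ref{snake}, to the pair consisting of $U_{-n}$ and the compact subgroup $H$, provided these two subgroups permute so that $H U_{-n}$ is again a subgroup. This is exactly where Lemma~\ref{magic} and Lemma~\ref{NnormU} enter: if $U$ is chosen so that $H$ normalizes $U$, then $H$ normalizes $U_{-n}$ for all $n$ (Lemma~\ref{NnormU}(1), using that $H$ is $\phi$-invariant), and hence $HU_{-n}$ is a subgroup of $G$. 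By Corollary~\ref{basebis} it suffices to take the supremum of $H_{top}(\phi,U)$ over those $U\in\B(G)$ that are normalized by $H$, and likewise the supremum over $U\in\B(G,H)$ normalized by $H$ computes $h_{top}(\bar\phi)$; so restricting to $H$-normalized $U$ costs nothing.

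First I would fix $U\in\B(G)$ with $H\leq N_G(U)$, set $A=H$, $B=U$, $B'=U_{-n}$, and note $A'=B'\cap A=U_{-n}\cap H=(U\cap H)_{-n}$ by Lemma~\ref{traj_sub}(1). Since $HU_{-n}=U_{-n}H$ is a subgroup, Lemma~\ref{snake} yields
\begin{equation*}
[U:U_{-n}]=[H:(U\cap H)_{-n}]\cdot[U:HU_{-n}]\,.
\end{equation*}
Next I would identify the last factor. Taking $\pi:G\to G/H$ the canonical projection (legitimate here only because $H$ is a subgroup, which it is; normality is not needed for left cosets), the map $U/HU_{-n}\to \pi U/\pi(U_{-n})$ is a bijection, so $[U:HU_{-n}]=[\pi U:\pi(U_{-n})]$. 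Now set $K=HU$, which lies in $\B(G,H)$ because $H$ normalizes $U$; one checks $K_{-n}=HU_{-n}$ (the element $x\in HU$ has all $\phi^i(x)\in HU$ iff, writing modulo $H$ and using $\phi H\leq H$, the coset $\pi(x)$ lies in $(\pi U)_{-n}=\pi(U_{-n})$, the latter equality being Lemma~\ref{piC}), so $[\pi U:\pi(U_{-n})]=[K:K_{-n}]$. Dividing by $n$ and letting $n\to\infty$, Proposition~\ref{lim}(1,2) converts the three index sequences into the three entropies, giving $H_{top}(\phi,U)=H_{top}(\phi\restriction_H,U\cap H)+H_{top}(\phi,K)$ with $K=HU\in\B(G,H)$.

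Finally I would take suprema. From the displayed identity, $H_{top}(\phi,U)\leq h_{top}(\phi\restriction_H)+H_{top}(\phi,HU)\leq h_{top}(\phi\restriction_H)+h_{top}(\bar\phi)$, since $HU\in\B(G,H)$ and $h_{top}(\bar\phi)=\sup\{H_{top}(\phi,K):K\in\B(G,H)\}$. Taking the supremum over all $H$-normalized $U\in\B(G)$ and invoking Corollary~\ref{basebis} gives $h_{top}(\phi)\leq h_{top}(\phi\restriction_H)+h_{top}(\bar\phi)$ — wait, the inequality we want is $\geq$, so I should instead argue that the two suprema on the right are \emph{achieved along a common cofinal family}: given $W\in\B(G,H)$ computing $\bar\phi$ up to $\varepsilon$ and $V\in\B(G)$ (normalized by $H$) computing $\phi\restriction_H$ up to $\varepsilon$, pass to $U=V\cap W\in\B(G)$, still normalized by $H$ after applying Lemma~\ref{magic}, which only decreases both; then $HU\leq W$ forces $H_{top}(\phi,HU)\geq H_{top}(\phi,W)$ by Lemma~\ref{antim}(1), and $U\cap H\leq V\cap H$ forces $H_{top}(\phi\restriction_H,U\cap H)\geq H_{top}(\phi\restriction_H,V\cap H)$, so the identity for this $U$ gives $h_{top}(\phi)\geq H_{top}(\phi,U)\geq (h_{top}(\phi\restriction_H)-\varepsilon)+(h_{top}(\bar\phi)-\varepsilon)$. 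Letting $\varepsilon\to0$ concludes. The main obstacle is the bookkeeping in this last step: ensuring that shrinking $U$ to sit below both $V$ and $W$ while keeping it $H$-normalized does not destroy the monotonicity estimates — this is handled precisely by combining Lemma~\ref{magic} with the antitone behavior of $H_{top}$ in Lemma~\ref{antim}(1).
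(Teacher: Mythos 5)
Your overall strategy --- restrict to $U\in\B(G)$ normalized by $H$, split $[U:U_{-n}]$ into an $H$-factor and a quotient factor, and conclude by passing to a common refinement $U\leq U_1\cap U_2$ and using antitonicity --- is exactly the paper's, but two of your index computations are wrong as written. First, applying Lemma~\ref{snake} with $B=U$ and $A=H$ requires $H\leq U$, which is not assumed; the correct choice is $A=U\cap H$, giving $[U:U_{-n}]=[U\cap H:(U\cap H)_{-n}]\cdot[U:(U\cap H)U_{-n}]$. Your first factor $[H:(U\cap H)_{-n}]$ is the wrong index (it is $[U\cap H:(U\cap H)_{-n}]$ whose normalized logarithm converges to $H_{top}(\phi\restriction_H,U\cap H)$), and with it the displayed identity is false; note also that $HU_{-n}$ need not be contained in $U$, so $[U:HU_{-n}]$ is not even a well-formed index of a subgroup of $U$.

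Second, and more seriously, the claim $K_{-n}=HU_{-n}$ for $K=HU$ (equivalently $(\pi U)_{-n}=\pi(U_{-n})$) is false in general: Lemma~\ref{piC} applies only to subgroups \emph{containing} $H$, so it yields $(\pi U)_{-n}=\pi\bigl((HU)_{-n}\bigr)$, and the inclusion $HU_{-n}\subseteq (HU)_{-n}$ can be strict. For instance, take $G=\Z_p^2$, $\phi(x,y)=(x+y,py)$, $H=\Z_p\times\{0\}$ and $U=p\Z_p\times\Z_p$: then $HU_{-1}=\Z_p\times p\Z_p$ while $(HU)_{-1}=G$. Consequently the exact additivity $H_{top}(\phi,U)=H_{top}(\phi\restriction_H,U\cap H)+H_{top}(\phi,HU)$ that you derive is false --- were it true, both inequalities of the Addition Theorem would follow at once with no hypotheses on $\phi H$ or $\ker(\phi)$, which is not what happens. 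What saves your argument for the present lemma is that the error goes in the harmless direction: from $U_{-n}H\subseteq (UH)_{-n}$ together with Lemma~\ref{gi}(4) and~(1) one gets $[U:(U\cap H)U_{-n}]\geq[UH:(UH)_{-n}]$, and this \emph{inequality} is all that is needed (and all the paper uses) for the lower bound. Once these two steps are corrected, your proof --- including the final step of choosing $U\leq U_1\cap U_2$ with $H\leq N_G(U)$ via Corollary~\ref{lastcor}(1) and invoking Lemma~\ref{antim}(1) --- coincides with the paper's.
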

\begin{proof}
Choose arbitrarily $U_1\in\B(G)$ and $U_2\in \B(G,H)$. By Corollary \ref{lastcor}(1) there exists $U\in \B(G)$ such that $U\leq U_1\cap U_2$ and $H\leq N_G(U)$ (in particular, $UH\in\B(G,H)$). Now, given $n\in\N$, since $H$ normalizes $U_{-n}$ by Lemma \ref{NnormU}(1), $(U\cap H)U_{-n}$ is a subgroup of $G$, and Lemma \ref{gi}(1) yields
$$[U:U_{-n}]=[U:(U\cap H)U_{-n}]\cdot[(U\cap H)U_{-n}:U_{-n}]\,.$$
By Lemma \ref{traj_sub}(1), $U_{-n}\cap H=(U\cap H)_{-n}$ and so, using Lemma \ref{gi}(2) for the first equality, 
$$[(U\cap H)U_{-n}:U_{-n}]=[U\cap H:U_{-n}\cap H]=[U\cap H:(U\cap H)_{-n}]\,.$$
Let $\pi:G\to G/H$ be the canonical projection. By Lemma \ref{gi}(4), 
$$[U:(U\cap H)U_{-n}]\geq [UH:H(U\cap H)U_{-n} ]=[UH:U_{-n}H]\geq [UH:(UH)_{-n}].$$
Hence, $[U:U_{-n}]\geq [U\cap H:(U\cap H)_{-n}]\cdot [UH:(UH)_{-n}]$. Taking logarithms, dividing by $n$ and passing to the limit for $n\to\infty$, by Proposition \ref{lim}(1,2) and applying Lemma \ref{basesuff}(1) for the first inequality, since $\pi(UH)=\pi U$, we obtain 
$$H_{top}(\phi\restriction_H,U_1)+H_{top}(\bar \phi,\pi U_2)\leq H_{top}(\phi\restriction_H,U)+H_{top}(\bar \phi,\pi U)\leq H_{top}(\phi,U)\leq h_{top}(\phi)\,.$$
By the arbitrariness of $U_1$ and $U_2$ we can conclude.
\end{proof}

\begin{lemma}\label{difficult<}
Let $G$ be a \tdlc group, $\phi\in\End(G)$ and $H$ a compact $\phi$-stable subgroup of $G$ such that $\ker(\phi)\leq H$. Then 
$$h_{top}(\phi)\leq h_{top}(\phi\restriction_H)+h_{top}(\bar\phi)\,,$$
where $\bar\phi:G/H\to G/H$ is the map induced by $\phi$.
\end{lemma}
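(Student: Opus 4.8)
The plan is to exploit the Limit Free Formula in the strong form given by Proposition \ref{phiN}. Recall that, since $H$ is compact and $\phi$-stable, equation \eqref{simplier} expresses $h_{top}(\phi)$ as the supremum of $\log[\phi M:M]$ over all compact subgroups $M\leq G$ with $M\leq\phi M$, $[\phi M:M]<\infty$, and it costs nothing (by Corollary \ref{lastcor}(1) and Lemma \ref{NnormU}(2)) to require in addition that $H$ normalizes $M$; similarly, Proposition \ref{phiN} expresses $h_{top}(\bar\phi)$ as the supremum of $\log[\phi N:N]$ over compact subgroups $N$ with $H\leq N\leq\phi N$ and $[\phi N:N]<\infty$. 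So it suffices to fix one such $M$ (normalized by $H$) and produce a compact subgroup $N\geq H$ with $N\leq\phi N$, $[\phi N:N]<\infty$, together with an estimate of the shape
\[
[\phi M:M]\ \leq\ [\phi (U\cap H):U\cap H]\cdot[\phi N:N]
\]
for a suitable $U\in\B(G)$, the first factor being bounded by $h_{top}(\phi\restriction_H)$ via Proposition \ref{lim}(1) and the Limit Free Formula applied inside $H$.

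First I would take $N:=MH$. Since $H$ normalizes $M$, $N$ is a compact subgroup of $G$ containing $H$; moreover $\phi N=\phi M\,\phi H=\phi M\cdot H\geq MH=N$ because $\phi H=H$ and $M\leq\phi M$, so $N\leq\phi N$, and $[\phi N:N]=[\phi M\cdot H:M\cdot H]\leq[\phi M:M]<\infty$ by Lemma \ref{gi}(4) (using that $H$ normalizes both $M$ and $\phi M$, the latter by Lemma \ref{NnormU}(2), so the relevant products are subgroups). Thus $\log[\phi N:N]\leq h_{top}(\bar\phi)$. The remaining task is to bound $[\phi M:M]/[\phi N:N]$, i.e.\ the ``kernel direction'' collapsed by passing from $M$ to $MH$, in terms of $H$. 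Here the hypothesis $\ker(\phi)\leq H$ is essential: it forces $\phi^{-1}$ of things inside $H$ to stay close to $H$. Concretely, apply Lemma \ref{snake} with $B=\phi M$, $B'=M$, $A=H\cap\phi M$ (so $B'A=MH\cap\phi M$, a subgroup since $H$ normalizes $M$ and $A'=M\cap H$):
\[
[\phi M:M]=[H\cap\phi M:H\cap M]\cdot[\phi M:\,MH\cap\phi M]\,.
\]
The second factor is $[\phi M\cdot H:MH]=[\phi N:N]$ by Lemma \ref{gi}(2). So it remains to show $[H\cap\phi M:H\cap M]$ is at most something bounded by $h_{top}(\phi\restriction_H)$.

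For that last step I would relate $H\cap M$ and $H\cap\phi M$ through $\phi$ restricted to $H$. Since $\ker(\phi)\leq H\leq$ (the ambient), and $M\leq\phi M$, one gets $H\cap M\leq H\cap\phi M$ and wants to see $H\cap\phi M=\phi(\text{something in }H)\cdot(\text{small})$. Writing $V:=\phi^{-1}(M)\cap H$, a compact open-in-$H$ subgroup (open in $H$ since $M$ is open in $\phi M\supseteq\phi H=H$ once we intersect appropriately; more carefully, choose $U\in\B(G)$ with $\phi M\cap U=M$ and put $V=\phi^{-1}(U)\cap H$), I expect $\phi V=H\cap\phi M\cap(\dots)$ and $V\leq H\cap M$, whence by Lemma \ref{gi}(6) and $\ker(\phi)\leq H$,
\[
[H\cap\phi M:H\cap M]\ \leq\ [\phi V:\phi(V\cap M)]\ =\ [V\ker(\phi):( V\cap M)\ker(\phi)]\ \leq\ [V:V\cap M\ker\phi]\,,
\]
which is the index of a $\phi\restriction_H$-cotrajectory-type subgroup inside a member of $\B(H)$ and hence, by Proposition \ref{lim}(1) applied to $\phi\restriction_H$ together with Corollary \ref{im_gi}/Lemma \ref{magia_di_willis} (monotonicity of such indices along the cotrajectory), is bounded by $e^{h_{top}(\phi\restriction_H)}$. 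Combining the three displayed inequalities gives $[\phi M:M]\leq e^{h_{top}(\phi\restriction_H)}[\phi N:N]\leq e^{h_{top}(\phi\restriction_H)+h_{top}(\bar\phi)}$; taking logs and the supremum over $M$ via \eqref{simplier} finishes the proof.

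The main obstacle I anticipate is the bookkeeping in the last paragraph: choosing $U$ and $V$ so that $\phi V$ is \emph{exactly} $H\cap\phi M$ (not merely contained in it) and $V\cap M$ sits correctly, and then checking that $[V:V\cap M\ker\phi]$ genuinely occurs as an index $[K:K_{-n}]$ (or is dominated by one) for some $K\in\B(H)$ and some $n$, so that Proposition \ref{lim}(1) for $\phi\restriction_H$ applies. This is exactly the point where $\ker(\phi)\leq H$ is used, and getting the $\ker(\phi)$-cosets to align with the cotrajectory of a member of $\B(H)$ is the delicate part; everything else is an application of Lemma \ref{gi}, Lemma \ref{snake}, and the already-established Limit Free machinery.
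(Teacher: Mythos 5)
Your overall strategy is the paper's: reduce via \eqref{simplier} to bounding $\log[\phi M:M]$ for a single compact $M$ with $M\leq\phi M$, $[\phi M:M]<\infty$, normalized by $H$; split $[\phi M:M]$ by Lemma \ref{snake} (with $B=\phi M$, $B'=M$, $A=\phi M\cap H$) into $[\phi M\cap H:M\cap H]\cdot[\phi M:MH\cap\phi M]$; and identify the second factor with $[\phi(MH):MH]$, which Proposition \ref{phiN} bounds by $h_{top}(\bar\phi)$ after taking logarithms. All of that matches the paper's proof and is correct.

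The gap is in your treatment of the first factor $[\phi M\cap H:M\cap H]$. The observation you are missing is that the hypotheses $\ker(\phi)\leq H$ and $\phi H=H$ give directly $\phi(M\cap H)=\phi M\cap\phi H=\phi M\cap H$ (if $\phi(m)=\phi(h)$ with $m\in M$, $h\in H$, then $mh^{-1}\in\ker(\phi)\leq H$, so $m\in M\cap H$). Hence $[\phi M\cap H:M\cap H]=[\phi(M\cap H):M\cap H]$, and $W:=M\cap H$ is a compact subgroup of $H$ with $W\leq\phi W$ and $[\phi W:W]<\infty$, so Proposition \ref{phiN} applied inside $H$ with trivial distinguished subgroup (i.e.\ \eqref{simplier} for $\phi\restriction_H$) gives $\log[\phi M\cap H:M\cap H]\leq h_{top}(\phi\restriction_H)$ at once. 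Your substitute argument via $V=\phi^{-1}(U)\cap H$ does not close: the inequality $[H\cap\phi M:H\cap M]\leq[\phi V:\phi(V\cap M)]$ is not justified (one would need $\phi(V\cap M)\leq H\cap M$, but $\phi(V\cap M)$ lands in $\phi M\cap H$, not in $M\cap H$, and $\phi V$ equals $U\cap\Im(\phi)\cap H$ rather than $H\cap\phi M$); and, more seriously, the concluding appeal to ``$[V:V\cap M\ker(\phi)]$ is a cotrajectory-type index and hence bounded by $e^{h_{top}(\phi\restriction_H)}$'' rests on a false principle: the indices $[K:K_{-n}]$ grow like $e^{nH_{top}(\phi,K)}$ and are not bounded by $e^{h_{top}}$ (already for multiplication by $p^{-1}$ on $\Q_p$ and $K=\Z_p$ one has $[K:K_{-n}]=p^n$). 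The only quantities of this kind that are uniformly bounded by $e^{h_{top}}$ are the limit-free indices $[\phi W:W]$ with $W$ compact and $W\leq\phi W$ --- which is exactly why the first factor must be exhibited in that form, as above.
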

\begin{proof}
Let $\pi:G\to G/H$ be the canonical projection and choose a compact subgroup $M$ of $G$ such that $M\leq \phi M$, $[\phi M:M]<\infty$, and such that $H$ normalizes $M$.  Applying Lemma \ref{snake} with $B=\phi M$, $B'=M$ and $A=\phi M\cap H$, we obtain
\begin{equation}\label{*}
[\phi M:M]=[\phi M\cap H: M\cap H]\cdot [\phi M:(\phi M\cap H)M]\,.
\end{equation}
By modularity, since $M\leq \phi M$, we get $(\phi M\cap H)M=\phi M\cap HM$; moreover, $\phi(MH)=(\phi M)H=(\phi M)HM$, so by Lemma \ref{gi}(2)
$$[\phi M:(\phi M \cap H)M]=[\phi M :\phi M \cap HM]=[(\phi M)HM:HM]=[\phi(HM):HM]\,.$$
Since $HM$ is a compact subgroup of $G$ containing $H$ such that $HM\leq \phi(HM)$, and $[\phi(HM):HM]<\infty$ by \eqref{*} and by hypothesis, it follows that $\log[\phi(MH):MH]\leq h_{top}(\bar\phi)$ by Proposition \ref{phiN}. 
\\
On the other hand, since $\ker(\phi)\leq H$ and $H=\phi H$, $\phi(M\cap H)=\phi M \cap \phi H=\phi M\cap H$. 
Thus, $[\phi M\cap H: M\cap H]=[\phi(M\cap H): M\cap H]$ is finite by \eqref{*} and by hypothesis, where $M\cap H$ is a compact subgroup of $H$ such that $M\cap H\leq \phi(M\cap H)$. By Proposition \ref{phiN}, $\log[\phi M\cap H: M\cap H]\leq h_{top}(\phi\restriction_H)$.
\\
Thus, we have proved that
\begin{equation*}\log[\phi M:M]\leq h_{top}(\phi\restriction_H)+h_{top}(\bar\phi)\end{equation*}
for any compact subgroup $M$ of $G$ such that $M\leq \phi M $, $[\phi M:M]<\infty$, and such that $H$ normalizes $M$. So we can conclude by \eqref{simplier}.
\end{proof}

In Lemmas \ref{easy_normal} and \ref{difficult_normal} we handle the case when $H$ is a closed normal subgroup of the \tdlc group $G$. Recall that in this setting, if $\phi\in \End(G)$ and $H$ is $\phi$-invariant, then by Lemma \ref{basesuff}(2) and Lemma \ref{base_sub}(1,2),
$$h_{top}(\phi\restriction_H)=\sup\{H_{top}(\phi,U\cap H): U\in \B(G)\}\ \text{ and }\ h_{top}(\bar \phi)=\sup\{H_{top}(\bar\phi,\pi U): U\in \B(G)\}\,.$$

\begin{lemma}\label{easy_normal}
Let $G$ be a \tdlc group, $\phi\in\End(G)$ and $H$ a closed $\phi$-invariant normal subgroup of $G$. Then
$$h_{top}(\phi)\geq h_{top}(\phi\restriction_H)+h_{top}(\bar\phi)\,,$$
where $\bar\phi:G/H\to G/H$ denotes the endomorphism induced by $\phi$.
\end{lemma}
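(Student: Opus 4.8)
The plan is to prove the inequality $h_{top}(\phi)\geq h_{top}(\phi\restriction_H)+h_{top}(\bar\phi)$ by reducing to finite-level index computations, exactly as in the compact case (Lemma \ref{easy<}), but now working with the quotient maps $\pi U$ in place of $UH$. Recall from the discussion preceding the statement that, since $H$ is normal, $h_{top}(\phi\restriction_H)=\sup\{H_{top}(\phi,U\cap H):U\in\B(G)\}$ and $h_{top}(\bar\phi)=\sup\{H_{top}(\bar\phi,\pi U):U\in\B(G)\}$, where $\pi\colon G\to G/H$ is the canonical projection; moreover by Corollary \ref{base_sub}(2), $\pi\B(G)$ is a base of neighborhoods of $H$ in $G/H$. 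So it suffices to fix an arbitrary $U\in\B(G)$ and show that $H_{top}(\phi\restriction_H,U\cap H)+H_{top}(\bar\phi,\pi U)\leq h_{top}(\phi)$.

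First I would fix $n\in\N$ and compare the three relevant indices $[U:U_{-n}]$, $[U\cap H:(U\cap H)_{-n}]$ and $[\pi U:(\pi U)_{-n}]$. By Lemma \ref{gi}(1), since $U\geq U_{-n}(U\cap H)\geq U_{-n}$ (note $U_{-n}(U\cap H)$ is a genuine subgroup because $U\cap H$ is normalized by $U_{-n}\leq U$, as $H\normal G$), we get
\begin{equation*}
[U:U_{-n}]=[U:U_{-n}(U\cap H)]\cdot[U_{-n}(U\cap H):U_{-n}]\,.
\end{equation*}
For the second factor, Lemma \ref{gi}(2) together with $U_{-n}\cap H=(U\cap H)_{-n}$ (Lemma \ref{traj_sub}(1)) gives $[U_{-n}(U\cap H):U_{-n}]=[U\cap H:(U\cap H)_{-n}]$. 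For the first factor, I would pass to the quotient: applying $\pi$, one has $\pi(U_{-n}(U\cap H))=\pi(U_{-n})=(\pi U)_{-n}$ by Lemma \ref{piC}, so $[U:U_{-n}(U\cap H)]=[\pi U:(\pi U)_{-n}]$ — here the key point is that $U_{-n}(U\cap H)=U\cap\pi^{-1}((\pi U)_{-n})$, i.e.\ the preimage of $(\pi U)_{-n}$ inside $U$ is exactly $U_{-n}(U\cap H)$, which follows because $\ker(\pi\restriction_U)=U\cap H$ and $\pi(U_{-n})=(\pi U)_{-n}$. Combining, $[U:U_{-n}]=[\pi U:(\pi U)_{-n}]\cdot[U\cap H:(U\cap H)_{-n}]$ (in fact an equality here, not just an inequality). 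Taking $\log$, dividing by $n$ and letting $n\to\infty$, Proposition \ref{lim}(1,2) yields $H_{top}(\phi,U)=H_{top}(\bar\phi,\pi U)+H_{top}(\phi\restriction_H,U\cap H)$, and since $H_{top}(\phi,U)\leq h_{top}(\phi)$, taking the supremum over $U\in\B(G)$ on the left-hand side completes the proof.

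The main obstacle I anticipate is the identification $U\cap\pi^{-1}((\pi U)_{-n})=U_{-n}(U\cap H)$, or equivalently the claim $[U:U_{-n}(U\cap H)]=[\pi U:(\pi U)_{-n}]$: the inclusion $U_{-n}(U\cap H)\subseteq U\cap\pi^{-1}((\pi U)_{-n})$ is immediate from $\pi(U_{-n})=(\pi U)_{-n}$, but the reverse inclusion needs a small argument — if $x\in U$ with $\phi^i(x)H\in \pi U$ for $i=0,\dots,n$, one must produce $u\in U_{-n}$ and $h\in U\cap H$ with $x=uh$, which is where normality of $H$ and the fact that $\pi$ is open with kernel $H$ are used. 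Once this bookkeeping is done the rest is routine, essentially the same limit passage as in Lemma \ref{easy<}; the cosmetic difference is only that the normal case replaces the subgroup $UH$ and its cotrajectory by the honest quotient $\pi U$ and $(\pi U)_{-n}$, which behaves even better because Lemma \ref{piC} gives $\pi(U_{-n})=(\pi U)_{-n}$ on the nose rather than just an inequality.
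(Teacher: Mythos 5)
Your overall strategy is the paper's: split $[U:U_{-n}]=[U:(U\cap H)U_{-n}]\cdot[(U\cap H)U_{-n}:U_{-n}]$, identify the second factor with $[U\cap H:(U\cap H)_{-n}]$ via Lemma \ref{gi}(2) and Lemma \ref{traj_sub}(1), and control the first factor by the quotient data. However, the step you single out as ``the main obstacle'' is in fact false, and so is the auxiliary identity you use to set it up. Lemma \ref{piC} requires $H\leq K$, and $U$ need not contain $H$, so $\pi(U_{-n})=(\pi U)_{-n}$ can fail; likewise $U\cap\pi^{-1}\bigl((\pi U)_{-n}\bigr)=U_{-n}(U\cap H)$ can fail, and with it your claimed equality of indices and of entropies. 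Concretely, take $G=\Q_p\times\Q_p$, $H=\Q_p\times\{0\}$, $\phi(x,y)=(x+p^{-1}y,\,y)$ (so $H$ is closed, normal and $\phi$-stable, and $\bar\phi$ is the identity on $G/H\cong\Q_p$), and $U=\Z_p\times\Z_p$. Then $U_{-n}=\Z_p\times p\Z_p$ for all $n\geq1$, so $\pi(U_{-n})=p\Z_p\neq\Z_p=(\pi U)_{-n}$ and $U_{-n}(U\cap H)=\Z_p\times p\Z_p$, whereas $U\cap\pi^{-1}\bigl((\pi U)_{-n}\bigr)=U$; moreover $[U:U_{-n}]=p$ while $[\pi U:(\pi U)_{-n}]\cdot[U\cap H:(U\cap H)_{-n}]=1$. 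So no ``small argument'' will produce the reverse inclusion or the equality of indices.

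Fortunately the lemma only needs an inequality, and that follows from the inclusion you correctly call immediate: $U_{-n}(U\cap H)\subseteq U\cap\pi^{-1}\bigl((\pi U)_{-n}\bigr)$ gives
$$[U:(U\cap H)U_{-n}]\;\geq\;\bigl[U:U\cap\pi^{-1}\bigl((\pi U)_{-n}\bigr)\bigr]\;=\;[\pi U:(\pi U)_{-n}]\,,$$
the last equality because $\pi\restriction_U$ maps $U$ onto $\pi U$ with kernel $U\cap H$ contained in $U\cap\pi^{-1}\bigl((\pi U)_{-n}\bigr)$ (this is Lemma \ref{gi}(6) in disguise; the paper reaches the same bound by passing through $UH$ and applying Lemma \ref{piC} to $UH$, which does contain $H$). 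With this repair you obtain $[U:U_{-n}]\geq[U\cap H:(U\cap H)_{-n}]\cdot[\pi U:(\pi U)_{-n}]$ and the limit passage goes through. One further small point: fixing a single $U$ and ``taking the supremum over $U$'' only bounds $\sup_U(a_U+b_U)$, which is in general smaller than $\sup_U a_U+\sup_U b_U$; you should start from arbitrary $U_1,U_2\in\B(G)$, pass to $U=U_1\cap U_2$, and invoke the antimonotonicity of Lemma \ref{basesuff}(1), as the paper does.
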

\begin{proof}
Let $\pi:G\to G/H$ be the canonical projection, let $U_1,\, U_2\in \B(G)$ and $U=U_1\cap U_2$. We claim that
\begin{equation}\label{par_easy_eq}H_{top}(\phi,U)\geq H_{top}(\phi\restriction_H,U\cap H)+H_{top}(\bar \phi,\pi U) \geq H_{top}(\phi\restriction_H,U_1\cap H)+H_{top}(\bar \phi,\pi U_2)\,.\end{equation}
By the arbitrariness of $U_1$ and $U_2$, this  implies that $h_{top}(\phi)\geq h_{top}(\phi\restriction_H)+h_{top}(\bar\phi)$. Thus, we have just to check  \eqref{par_easy_eq}. In fact, the second inequality is clear by Lemma \ref{basesuff}(1), while for the first one, we proceed as follows. 
Since $H\normal G$, also $U\cap H\normal U$, so that $(U\cap H)U_{-n}$ is a subgroup of $U$ containing $U_{-n}$, for all $n\in\N$. Thus, Lemma \ref{gi}(1) yields 
$$[U:U_{-n}]=[U:(U\cap H)U_{-n}]\cdot[(U\cap H)U_{-n}:U_{-n}]\,.$$
Proceeding as in the second part of the proof of Lemma \ref{easy<}, applying Lemma \ref{gi}(6) and Lemma \ref{piC} we get
$$[U:U_{-n}]\geq [U\cap H:(U\cap H)_{-n}]\cdot [UH:(UH)_{-n}]= [U\cap H:(U\cap H)_{-n}]\cdot [\pi U:(\pi U)_{-n}]\,.$$
Taking logarithms, dividing by $n$ and passing to the limit for $n\to\infty$, by Proposition \ref{lim}(1) we obtain \eqref{par_easy_eq}.
\end{proof}

\begin{lemma}\label{difficult_normal}
Let $G$ be a \tdlc group, $\phi\in\End(G)$ and $H$ a closed $\phi$-stable normal subgroup of $G$ such that $\ker(\phi)\leq H$. Then 
$$h_{top}(\phi)\leq h_{top}(\phi\restriction_H)+h_{top}(\bar\phi)\,,$$
where $\bar\phi:G/H\to G/H$ denotes the continuous endomorphism induced by $\phi$.
\end{lemma}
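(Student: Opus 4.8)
The plan is to mirror the strategy of Lemma~\ref{difficult<}, replacing the Limit Free Formula with its quotient version from Proposition~\ref{phiN} and handling the non-compactness of $H$ by working with the subgroups $U_+$ directly rather than with arbitrary compact $M\leq\phi M$. Let $\pi:G\to G/H$ be the canonical projection. By Corollary~\ref{basebis}, it suffices to bound $\log[\phi U_+:U_+]$ for each $U\in\B(G)$ with $H\leq N_G(U)$; note that then $H$ normalizes $U_+$ by Lemma~\ref{NnormU}(1), so $(U_+\cap H)$ is a subgroup and $(U_+\cap H)(\phi U_+)$ and similar products make sense. First I would set $M=U_+$, so that $M\leq\phi M$ and $[\phi M:M]<\infty$ by Lemma~\ref{basic_willis}(3) and the remark following it, and apply Lemma~\ref{snake} with $B=\phi M$, $B'=M$, $A=\phi M\cap H$ (this requires $M(\phi M\cap H)=(\phi M\cap H)M$, which holds because $H$ normalizes $M$), obtaining
\begin{equation*}
[\phi M:M]=[\phi M\cap H:M\cap H]\cdot[\phi M:(\phi M\cap H)M]\,.
\end{equation*}

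Next I would identify each factor with a quantity controlled by $h_{top}(\phi\restriction_H)$ or $h_{top}(\bar\phi)$, exactly as in Lemma~\ref{difficult<}. For the first factor, since $\ker(\phi)\leq H$ and $\phi H=H$ we get $\phi(M\cap H)=\phi M\cap\phi H=\phi M\cap H$, so $[\phi M\cap H:M\cap H]=[\phi(M\cap H):M\cap H]$ with $M\cap H$ a compact subgroup of $H$ satisfying $M\cap H\leq\phi(M\cap H)$ and finite index (by the displayed identity); hence $\log[\phi M\cap H:M\cap H]\leq h_{top}(\phi\restriction_H)$ by Proposition~\ref{phiN} applied inside the \tdlc group $H$ (with trivial $H$-subgroup $\{1\}$, i.e.\ the case recorded in \eqref{simplier} but for $H$). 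For the second factor, modularity (valid since $M\leq\phi M$) gives $(\phi M\cap H)M=\phi M\cap HM$, and $\phi(MH)=(\phi M)H=(\phi M)HM$, so by Lemma~\ref{gi}(2)
\begin{equation*}
[\phi M:(\phi M\cap H)M]=[\phi M:\phi M\cap HM]=[(\phi M)HM:HM]=[\phi(HM):HM]\,.
\end{equation*}
Now $HM$ is \emph{not} compact (since $H$ need not be), but it is a closed subgroup containing $H$ with $HM\leq\phi(HM)$ and $[\phi(HM):HM]<\infty$; passing to the quotient via $\pi$ it maps to $\pi M$, a \emph{compact} subgroup of $G/H$, and one checks $\overline\phi(\pi M)=\pi(\phi M)$ with $\pi M\leq\overline\phi(\pi M)$ and $[\overline\phi(\pi M):\pi M]=[\phi(HM):HM]$ (the kernel of $\pi$ restricted to the relevant groups is absorbed because $H\leq HM$). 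Applying Proposition~\ref{phiN} to $\overline\phi$ on $G/H$ — or more precisely reusing the identity $h_{top}(\bar\phi)=\sup\{\log[\phi M':M']:\ldots\}$ from Proposition~\ref{phiN} in the form valid for $\pi M$ — yields $\log[\phi(HM):HM]\leq h_{top}(\bar\phi)$.

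Combining the two bounds through the displayed product formula gives $\log[\phi U_+:U_+]\leq h_{top}(\phi\restriction_H)+h_{top}(\bar\phi)$ for every $U\in\B(G)$ with $H\leq N_G(U)$, and then Corollary~\ref{basebis} (or \eqref{simplier}) lets us take the supremum to conclude $h_{top}(\phi)\leq h_{top}(\phi\restriction_H)+h_{top}(\bar\phi)$. The main obstacle I anticipate is the bookkeeping around non-compactness of $H$: Proposition~\ref{phiN} as stated requires a \emph{compact} subgroup $M$, so for the second factor one cannot apply it to $HM\leq G$ directly but must either pass to the quotient $G/H$ (where $\pi M$ is compact and Proposition~\ref{phiN} applies to $\overline\phi$ with trivial subgroup) or prove the needed index identities by hand; verifying that $\pi$ carries the chain $HM\leq\phi(HM)$ faithfully onto $\pi M\leq\overline\phi(\pi M)$ with equal indices, using $\ker(\phi)\leq H$, is the delicate point. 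A secondary technical check is that $H$ normalizing $U$ indeed propagates to $H$ normalizing $U_+$ so that all the products of subgroups above are genuinely subgroups — this is exactly Lemma~\ref{NnormU}(2) (using $\phi H=H$), so it is available.
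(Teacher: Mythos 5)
Your core computation is correct and is exactly what the paper leaves implicit when it declares the proof ``analogous'' to Lemma~\ref{difficult<}: the decomposition $[\phi M:M]=[\phi M\cap H:M\cap H]\cdot[\phi M:(\phi M\cap H)M]$ via Lemma~\ref{snake}, the identification of the first factor with $[\phi(M\cap H):M\cap H]$ using $\ker(\phi)\leq H$ and $\phi H=H$, and --- the genuinely new point in the normal case --- the observation that $HM$ is no longer compact, so that $[\phi(HM):HM]$ must be transported to $[\bar\phi(\pi M):\pi M]$ in $G/H$ via Lemma~\ref{gi}(6) before Proposition~\ref{phiN} (with trivial distinguished subgroup) can be invoked. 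That is the right way to fill the gap the paper glosses over.

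There is, however, one genuine flaw in your outer reduction. You restrict to $U\in\B(G)$ with $H\leq N_G(U)$ and cite Corollary~\ref{basebis} (equivalently \eqref{simplier}) to claim that the supremum of $\log[\phi U_+:U_+]$ over such $U$ equals $h_{top}(\phi)$. But Corollary~\ref{basebis} is proved only for \emph{compact} $H$: it rests on Lemma~\ref{magic} and Corollary~\ref{lastcor}(1), which use compactness of $H$ to produce a neighbourhood base of $1$ consisting of subgroups normalized by $H$. For a closed non-compact normal $H$ the family $\{U\in\B(G):H\leq N_G(U)\}$ need not be a base at $1$ --- it can even be empty (take $H=G$ topologically simple, non-compact and non-discrete) --- so your final supremum need not recover $h_{top}(\phi)$ and the argument does not close as written. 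The repair is precisely the ``further simplification'' the paper alludes to: the normalizer hypothesis is not needed anywhere. For any compact $M$ with $M\leq\phi M$, normality of $H$ gives $m(\phi M\cap H)m^{-1}=(m\,\phi M\,m^{-1})\cap H=\phi M\cap H$ for every $m\in M\subseteq\phi M$, so $M(\phi M\cap H)=(\phi M\cap H)M$ and Lemma~\ref{snake} applies; likewise $HM$ is automatically a subgroup. Hence your two bounds hold for \emph{every} $U\in\B(G)$ with $M=U_+$, and you conclude directly from Proposition~\ref{limit_free}, i.e.\ from $h_{top}(\phi)=\sup\{\log[\phi U_+:U_+]:U\in\B(G)\}$, or from Proposition~\ref{phiN} applied with the trivial subgroup.
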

\begin{proof}
The proof is analogous to the proof of Lemma \ref{difficult<}, with the further simplification that there is no need to choose an $M$ which is normalized by $H$ since, being $H$ normal, $HM$ is a subgroup of $G$. 
\end{proof}

\section{Topological entropy vs scale}\label{htopvsscale}

\subsection{Reminders on scale}

We recall that for a continuous endomorphism $\phi:G\to G$ of a \tdlc group $G$, the {\em scale} of $\phi$ is defined in \cite{Willis_endo} by $$s(\phi):=\min\{[\phi U:\phi U\cap  U]:U\in \B(G)\}\,.$$
Moreover, $U\in\B(G)$ is said to be \emph{minimizing} if $s(\phi)=[\phi U:U\cap \phi U]$. The following lemma is a consequence of some results proved in \cite{Willis_endo}:

\begin{lemma}\label{Mbasebis}
Let $G$ be a \tdlc group and let $\phi\in\End(G)$. Let also 
$$\mathcal M(G,\phi):=\{U\in\B(G):U\ \text{minimizing}\}\ \text{ and }\ \nub (\phi):=\bigcap \mathcal M(G,\phi)\,.$$
Then, $\nub(\phi)$ is a compact $\phi$-stable subgroup of $G$, and $\mathcal M(G,\phi)$ is a base for the neighborhoods of $\nub(\phi)$ in $G$.
\end{lemma}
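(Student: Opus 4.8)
The plan is to deduce the statement from Lemma \ref{Mbase}, feeding into it the two structural facts about the scale that are established in \cite{Willis_endo}. Concretely, I would first record from \cite{Willis_endo} that $\mathcal M(G,\phi)$ is non-empty (indeed, by the tidying procedure every $U\in\B(G)$ can be refined to a minimizing subgroup contained in it) and that $\nub(\phi)=\bigcap\mathcal M(G,\phi)$ is a compact, $\phi$-stable subgroup of $G$. This already gives the first assertion of the lemma, and it also provides the compact subgroup $C:=\nub(\phi)$ to which Lemma \ref{Mbase} will be applied.

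Next I would verify the three hypotheses of Lemma \ref{Mbase} for the family $\B:=\mathcal M(G,\phi)$ and the compact subgroup $C:=\nub(\phi)$. First, $\mathcal M(G,\phi)\subseteq\B(G,\nub(\phi))$: each minimizing $U$ is compact and open and contains $\nub(\phi)$, since $\nub(\phi)$ is by definition the intersection of all minimizing subgroups. Second, $\bigcap\mathcal M(G,\phi)=\nub(\phi)$, which is literally the definition of $\nub(\phi)$. Third, $\mathcal M(G,\phi)$ is downward directed with respect to inclusion: given minimizing $U_1,U_2$, one must produce a minimizing $U_3$ with $U_3\leq U_1\cap U_2$; this is exactly the point where the results of \cite{Willis_endo} are invoked, namely that applying the tidying procedure to $U_1\cap U_2$ yields a minimizing subgroup still contained in $U_1\cap U_2$. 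Once these three points are in place, Lemma \ref{Mbase} gives verbatim that $\mathcal M(G,\phi)$ is a base for the neighborhoods of $\nub(\phi)$ in $G$, completing the proof.

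The main obstacle is the third point, the directedness of $\mathcal M(G,\phi)$, together with pinning down the precise statements in \cite{Willis_endo} that furnish it and the compactness and $\phi$-stability of $\nub(\phi)$. Unlike the automorphism case of \cite{Willis}, here $\phi$ need be neither injective nor surjective, so one must use the endomorphism versions of these facts, in which the subgroups $U_+$ and $U_n$ of Definition \ref{newU+} play the role of the cotrajectory of $\phi^{-1}$. Since the lemma is explicitly advertised as ``a consequence of some results proved in \cite{Willis_endo}'', I would cite the relevant proposition(s) there rather than reprove them, and limit the argument in the paper to the short reduction to Lemma \ref{Mbase} described above.
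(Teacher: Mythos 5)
Your overall reduction is exactly the paper's: quote \cite{Willis_endo} for the compactness and $\phi$-stability of $\nub(\phi)$, check that $\mathcal M(G,\phi)$ satisfies the hypotheses of Lemma \ref{Mbase} with $C=\nub(\phi)$, and conclude. The inclusion $\mathcal M(G,\phi)\subseteq\B(G,\nub(\phi))$ and the identity $\bigcap\mathcal M(G,\phi)=\nub(\phi)$ are indeed immediate from the definitions, as you say.

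The one genuine problem is your justification of downward directedness (and, incidentally, of non-emptiness): it is \emph{not} true that every $U\in\B(G)$, nor even every $U\in\B(G)$ containing $\nub(\phi)$, can be ``refined by the tidying procedure to a minimizing subgroup contained in it.'' Since every minimizing subgroup contains $\nub(\phi)$ by the very definition of $\nub(\phi)$, a compact open subgroup not containing $\nub(\phi)$ contains no minimizing subgroup whatsoever; such subgroups exist whenever $\nub(\phi)\neq\{1\}$, and this is precisely the phenomenon the paper exploits in the proof of Corollary \ref{last}. The reason your heuristic fails is that only the first step of the tidying procedure (passing to $U_{-n}$, which achieves tidiness above) shrinks the subgroup; the step achieving tidiness below \emph{enlarges} it, so the output need not sit inside the input. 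The fact actually needed, and the one the paper cites, is \cite[Proposition 12]{Willis_endo}: the intersection of two minimizing (equivalently, tidy) subgroups is again minimizing, i.e.\ $\mathcal M(G,\phi)$ is closed under finite intersections. With that, $U_1\cap U_2$ itself is the required lower bound and Lemma \ref{Mbase} applies verbatim; non-emptiness, for its part, needs no tidying argument at all, since $s(\phi)$ is defined as a minimum over a non-empty set of positive integers and is therefore attained. So the architecture of your proof is right, but the specific external input you attribute to \cite{Willis_endo} for directedness is not a result of that paper and is false as stated; replace it with the closure of $\mathcal M(G,\phi)$ under finite intersections.
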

\begin{proof}
The fact that $\nub(\phi)$ is compact and $\phi$-stable is proved in \cite[Section 9]{Willis_endo}. Furthermore, by \cite[Proposition 12]{Willis_endo}, $\mathcal M(G,\phi)$ is closed under finite intersections, in particular it is downward directed with respect to inclusion, and so the conclusion follows by Lemma \ref{Mbase}.
\end{proof}

One of the main results of \cite{Willis_endo}, extending its counterpart for topological automorphisms from \cite{Willis2}, is the following characterization of minimizing subgroups (see \eqref{m=t} below). 

\begin{definition}
Let $G$ be a \tdlc group and $\phi\in\End(G)$. A $U\in\B(G)$ is said to be:
\begin{enumerate}[\rm --]
\item {\em tidy above} if $U=U_+U_-$;
\item {\em tidy below} if $U_{++}:=\bigcup_{n\in\N}\phi^nU_+$ is closed and the sequence $\{[\phi^{n+1}U_+:\phi^{n}U_+]\}_{n\in\N}$ is constant;
\item {\em tidy} if $U$ is both tidy above and tidy below. 
\end{enumerate}
\end{definition}

Theorem 7.7 in \cite{Willis_endo} states that 
\begin{equation}\label{m=t}
\text{$U\in\B(G)$ is minimizing if and only if $U$ is tidy.}
\end{equation}

We will use the following properties of tidy subgroups, note that (2) follows from (1) and \eqref{m=t}.

\begin{lemma}\label{(B)}\emph{\cite{Willis_endo}}
Let $G$ be a \tdlc group, $\phi\in\End(G)$ and $U\in\B(G)$.
\begin{enumerate}[\rm (1)]
\item If $U$ is tidy above, then $[\phi U_+:U_+]=[\phi U:U\cap\phi U]$.
\item If $U$ is tidy, then $s(\phi)=[\phi U_+:U_+]$.
\end{enumerate}
\end{lemma}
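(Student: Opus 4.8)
The plan is to prove part (1) directly and then obtain part (2) as an immediate consequence of (1) together with \eqref{m=t}. Indeed, if $U$ is tidy then it is in particular tidy above, so (1) gives $[\phi U_+:U_+]=[\phi U:U\cap\phi U]$; and $U$ tidy is minimizing by \eqref{m=t}, which by definition means $s(\phi)=[\phi U:U\cap\phi U]$, so $s(\phi)=[\phi U_+:U_+]$.

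For (1), the idea is to replace the two indices by indices of the form $[KU:U]$ via Lemma \ref{gi}(2), and then prove the set-theoretic equality $(\phi U_+)U=(\phi U)U$, which is exactly where the hypothesis \emph{tidy above} enters. First I would record the reformulations: by Lemma \ref{basic_willis}(3) we have $U_+=U\cap\phi U_+$, so Lemma \ref{gi}(2) yields $[\phi U_+:U_+]=[\phi U_+:U\cap\phi U_+]=[(\phi U_+)U:U]$; similarly $[\phi U:U\cap\phi U]=[(\phi U)U:U]$. Since $U_+\leq U$, the inclusion $(\phi U_+)U\subseteq(\phi U)U$ is clear. For the reverse inclusion I would use two elementary facts about the cotrajectory $U_-=\bigcap_{n\geq 0}\phi^{-n}U$: first, $\phi U_-\subseteq U_-$ (if $\phi^m x\in U$ for all $m\geq 0$, then $\phi^m(\phi x)=\phi^{m+1}x\in U$ for all $m\geq 0$); second, $U_-U=U$, because $U_-\subseteq U$ and $U$ is a subgroup. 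Now \emph{tidy above} means precisely $U=U_+U_-$, so $\phi U=(\phi U_+)(\phi U_-)\subseteq(\phi U_+)U_-$, and hence $(\phi U)U\subseteq(\phi U_+)U_-U=(\phi U_+)U$. Combining the two inclusions, $(\phi U)U=(\phi U_+)U$, and therefore $[\phi U_+:U_+]=[(\phi U_+)U:U]=[(\phi U)U:U]=[\phi U:U\cap\phi U]$.

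I do not expect a serious obstacle: the argument is a short manipulation of set products together with Lemma \ref{gi}(2) and the already-established identity $U_+=U\cap\phi U_+$ from Lemma \ref{basic_willis}(3). The only points needing a little care are verifying $\phi U_-\subseteq U_-$ and $U_-U=U$, and applying associativity of the set product correctly; the genuine input is the defining equality $U=U_+U_-$ of \emph{tidy above}, which is what forces $(\phi U)U$ down to $(\phi U_+)U$.
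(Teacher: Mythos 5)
Your proposal is correct. The paper itself gives no proof of this lemma — it cites Willis for part (1) and obtains (2) from (1) together with \eqref{m=t}, exactly as you do — and your argument for (1), rewriting $[\phi U_+:U_+]=[(\phi U_+)U:U]$ and $[\phi U:U\cap\phi U]=[(\phi U)U:U]$ via Lemma \ref{gi}(2) and then deducing the set equality $(\phi U)U=(\phi U_+)U$ from $U=U_+U_-$, $\phi U_-\subseteq U_-$ and $U_-U=U$, is a sound and complete reconstruction of the standard tidy-above computation.
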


\subsection{Reduction to surjective endomorphisms and automorphisms}

In this subsection we recall the definition of the following two subgroups from \cite{Willis_endo}, and how they can be used to reduce the computation of the scale and the topological entropy respectively to topological automorphisms and to surjective continuous endomorphisms.

\begin{definition}
Let $G$ be a \tdlc group and $\phi\in\End(G)$. Define:
\begin{enumerate}[\rm --]
\item $\pa(\phi)=\{x\in G:\text{there exists $(x_n)_{n\in\N}\subseteq G$ bounded, $x_0=x$ and $\phi(x_{n+1})=x_n$, for all $n\in\N_{>0}$}\}$;
\item $\bik(\phi)=\overline{\ker_\infty(\phi)}\cap \pa(\phi)$, where $\ker_\infty(\phi)=\bigcup_{n=1}^\infty\ker(\phi^n)$.
\end{enumerate}
\end{definition}

It is shown in \cite[Section 9]{Willis_endo} that $\bik(\phi)\leq\nub(\phi)\leq \pa(\phi)$, and in particular, 
\begin{equation}\label{kpainnub}
\ker(\phi)\cap \pa(\phi)\leq \nub(\phi)\,.
\end{equation}
Moreover, $\pa(\phi)$ is a closed $\phi$-stable subgroup of $G$ such that 
\begin{equation}\label{U++inpar}
U_{++}\leq \pa(\phi)\ \text{for all}\ U\in\B(G)\,.
\end{equation}
Similarly to $\nub(\phi)$, also $\bik(\phi)$ is a compact $\phi$-stable subgroup of $G$, but $\bik(\phi)$ is normal in $\pa(\phi)$.

\medskip
For all this section, for $G$ a \tdlc group and $\phi\in\End(G)$, let
$$\psi:=\phi\restriction_{\pa(\phi)}:\pa(\phi)\to \pa(\phi) \ \text{ and }\ \tilde\psi:\pa(\phi)/\bik(\phi)\to \pa(\phi)/\bik(\phi)\,,$$
where $\tilde\psi$ is the map induced by $\psi$. Let also $\pi:\pa(\phi)\to\pa(\phi)/\bik(\phi)$ be the canonical projection.

\begin{lemma}\label{red_scale}
Let $G$ be a \tdlc group and $\phi\in\End(G)$. Then:
\begin{enumerate}[\rm (1)]
\item $\psi$ is a surjective continuous endomorphism and $\tilde\psi$ is a topological automorphism;
\item if $U\in\B(G)$ and $V=U\cap \pa(\phi)\in\B(\pa(\phi))$, then $V_+=U_+$ and $V_-=U_-\cap \pa(\phi)$. In particular, if $U$ is tidy above for $\phi$, then $V$ is tidy above for $\psi$;
\item $s(\psi)= s(\phi)$, and $\{U\cap \pa(\phi):U\in \mathcal M(G,\phi)\}\subseteq \mathcal M(\pa(\phi),\psi)$ is cofinal with respect to $\supseteq$ (i.e., for every $V\in \mathcal M(\pa(\phi),\psi)$ there exists $U\in \mathcal M(G,\phi)$  such that $U\cap \pa(\phi)\leq V$);
\item $s(\tilde\psi)=s(\psi)$, and $\mathcal M(\pa(\phi)/\bik(\phi),\tilde\psi)=\{\pi U:U\in \mathcal M(\pa(\phi),\psi)\}$;
\item $\nub(\phi)=\nub(\psi)$ and $\pi(\nub(\psi))=\nub(\tilde\psi)$.
\end{enumerate}
\end{lemma}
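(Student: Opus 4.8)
The plan is to prove Lemma \ref{red_scale} item by item, since each part builds on the previous ones and on the results recalled from \cite{Willis_endo} in the preceding subsections.

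\textbf{Part (1).} For surjectivity of $\psi$, I would use the description of $\pa(\phi)$: given $x\in\pa(\phi)$ with witnessing bounded sequence $(x_n)_{n\in\N}$, the shifted sequence $(x_{n+1})_{n\in\N}$ witnesses that $x_1\in\pa(\phi)$, and $\psi(x_1)=\phi(x_1)=x_0=x$; hence $\psi$ is onto. For injectivity of $\tilde\psi$: one must show $\ker(\psi)\leq\bik(\phi)=\overline{\ker_\infty(\phi)}\cap\pa(\phi)$. If $x\in\pa(\phi)$ and $\phi(x)=1$, then $x\in\ker(\phi)\subseteq\ker_\infty(\phi)\subseteq\overline{\ker_\infty(\phi)}$, and $x\in\pa(\phi)$ by hypothesis, so $x\in\bik(\phi)$; thus $\ker(\tilde\psi)$ is trivial. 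Surjectivity of $\tilde\psi$ follows from surjectivity of $\psi$ since $\pi$ is onto. That $\tilde\psi$ is a topological automorphism (i.e.\ its inverse is continuous) then follows from the fact that a continuous bijective endomorphism of a locally compact (here, the quotient is locally compact by \S\ref{tdlcq}) group which is also open is a topological isomorphism; alternatively one invokes the relevant statement from \cite[Section 9]{Willis_endo}, and I would cite it rather than re-prove openness.

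\textbf{Parts (2)--(5).} For (2), I would compute $V_+$ directly from Lemma \ref{basic_willis}(2): an element $u\in V=U\cap\pa(\phi)$ lies in $V_+$ iff it has a full backward $\psi$-orbit inside $V$; but $\pa(\phi)$ is $\phi$-stable and contains all such backward orbits automatically (by its very definition, once $u\in\pa(\phi)$ the bounded backward sequence lies in $\pa(\phi)$), so $V_+=U_+\cap\pa(\phi)=U_+$, the last equality because $U_+\subseteq\pa(\phi)$ by Lemma \ref{basic_willis}(2) again (every element of $U_+$ has a bounded—indeed $U$-valued—backward orbit). The identity $V_-=U_-\cap\pa(\phi)$ is Lemma \ref{traj_sub}(1) applied to the closed $\phi$-invariant subgroup $\pa(\phi)$. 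The ``tidy above'' claim: if $U=U_+U_-$, intersect with $\pa(\phi)$ and use the two identities just proved together with $U_+\subseteq\pa(\phi)$ to get $V=V_+V_-$. For (3), the inequality $s(\psi)\leq s(\phi)$ follows by restricting a minimizing $U$ for $\phi$ to $\pa(\phi)$ and using (2) plus Lemma \ref{(B)}: $[\psi V_+:V_+]=[\phi U_+:U_+]=s(\phi)$ when $U$ is tidy (minimizing), and $[\psi V:V\cap\psi V]\leq[\psi V_+:V_+]$ always, giving $s(\psi)\le s(\phi)$; actually one gets $V$ is tidy above so by Lemma \ref{(B)}(1) equality $[\psi V_+:V_+]=[\psi V:V\cap\psi V]$. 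The reverse inequality $s(\phi)\leq s(\psi)$ and the cofinality statement require the converse construction: given $V\in\mathcal M(\pa(\phi),\psi)$, one must produce a minimizing $U\in\B(G)$ for $\phi$ with $U\cap\pa(\phi)\leq V$; I expect this to require the fact that $V$ extends to a tidy (hence minimizing) subgroup of $G$, which is where I would lean on \cite[Section 9]{Willis_endo} results on how tidy subgroups of $\pa(\phi)$ relate to tidy subgroups of $G$. Part (4) is the automorphism analogue: $\bik(\phi)$ is compact, $\psi$-stable, normal in $\pa(\phi)$, so quotienting is clean; $s(\tilde\psi)=s(\psi)$ and the identification of minimizing subgroups is again essentially \cite{Willis2} combined with \cite{Willis_endo}, using that $\ker(\psi)\subseteq\bik(\phi)$ so no collapsing of indices occurs under $\pi$. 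Part (5) follows by taking intersections: $\nub(\phi)=\bigcap\mathcal M(G,\phi)$, and by (3) the family $\{U\cap\pa(\phi):U\in\mathcal M(G,\phi)\}$ is cofinal in $\mathcal M(\pa(\phi),\psi)$, so the intersections agree (both equal $\bigcap\mathcal M(\pa(\phi),\psi)=\nub(\psi)$), using also $\nub(\phi)\subseteq\pa(\phi)$ from \eqref{U++inpar}-type containments already recalled; similarly $\pi(\nub(\psi))=\nub(\tilde\psi)$ follows from (4) together with the fact that $\pi$ is a closed map on compact sets and commutes with the relevant intersections.

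\textbf{Main obstacle.} The genuinely nontrivial point is the reverse direction in (3) (and its analogue in (4)): showing that every minimizing subgroup of the restriction $\psi$ on $\pa(\phi)$ comes from, or is dominated by the restriction of, a minimizing subgroup of $\phi$ on all of $G$. The easy direction—restricting tidy subgroups downstairs and checking the scale is preserved—is a direct computation with Lemma \ref{(B)} and part (2). The hard direction needs the structural input that tidiness can be ``lifted'' from $\pa(\phi)$ to $G$, which is precisely the content of the deeper results in \cite[Section 9]{Willis_endo}; I would state clearly which lemma of that section is being used and keep the argument here to assembling those citations with parts (1)--(2), rather than reproving the lifting.
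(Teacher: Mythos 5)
Your parts (1), (2), (5) and the inequality $s(\psi)\le s(\phi)$ are essentially sound and close to the paper's route: for (2) the paper deduces $U_+\le V_+$ from $U_+\le\pa(\phi)$, $U_+\le\phi U_+$ and Lemma \ref{leqU+}(2) rather than from the backward-orbit description of Lemma \ref{basic_willis}(2), but the two arguments are interchangeable, and the modularity step for tidiness above is exactly the paper's. (Two small points: in (1), triviality of $\ker(\tilde\psi)$ needs $\ker(\psi)\le\bik(\phi)$ \emph{together with} $\psi$-stability of $\bik(\phi)$ --- if $\psi(x)\in\bik(\phi)$, pick $b\in\bik(\phi)$ with $\psi(b)=\psi(x)$, so $xb^{-1}\in\ker(\psi)\le\bik(\phi)$; and your parenthetical claim that $[\psi V:V\cap\psi V]\le[\psi V_+:V_+]$ ``always'' has the inequality backwards in general, though your self-correction via tidiness above and Lemma \ref{(B)}(1) is the right argument.)

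The genuine gap is exactly where you flagged it: the reverse inequality $s(\phi)\le s(\psi)$ with the cofinality claim in (3), and all of (4). You propose to ``lean on \cite[Section 9]{Willis_endo}'' for a lifting of tidy subgroups from $\pa(\phi)$ to $G$, but no such ready-made statement is what the paper uses, and an unspecified citation is not a proof. The paper's argument is an explicit construction: given $V\in\mathcal M(\pa(\phi),\psi)$, choose by Corollary \ref{base_sub}(1) some $U'\in\B(G)$ with $U'\cap\pa(\phi)=V$; by \cite[Proposition 3.9]{Willis_endo} there is $n$ such that $U:=(U')_{-n}$ is tidy above for $\phi$; since $\pa(\phi)$ is $\phi$-stable, $U\cap\pa(\phi)=V_{-n}$, and $V_{-n}$ is still tidy, hence minimizing, for $\psi$ by \cite[Proposition 7.10]{Willis_endo}; then part (2) gives $(V_{-n})_+=U_+$, and Lemma \ref{(B)}(1,2) yields $s(\psi)=[\psi(V_{-n})_+:(V_{-n})_+]=[\phi U_+:U_+]=[\phi U:U\cap\phi U]\ge s(\phi)$, which simultaneously proves $s(\phi)=s(\psi)$, that $U$ is minimizing for $\phi$, and that $U\cap\pa(\phi)=V_{-n}\le V$, i.e.\ the cofinality. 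Similarly, (4) is not an appeal to \cite{Willis2} but a short index computation: for $U\in\mathcal M(\pa(\phi),\psi)$ one has $\bik(\phi)\le\nub(\psi)\le U$, so Lemma \ref{gi}(6) gives $s(\psi)=[\psi U:U\cap\psi U]=[\tilde\psi\pi U:\pi U\cap\tilde\psi\pi U]\ge s(\tilde\psi)$; conversely, for $W\in\mathcal M(\pa(\phi)/\bik(\phi),\tilde\psi)$ one has $\pi^{-1}W\in\B(\pa(\phi))$, and Lemma \ref{gi}(5) together with $\phi$-stability of $\bik(\phi)$ gives $s(\tilde\psi)=[\psi(\pi^{-1}W):\pi^{-1}W\cap\psi(\pi^{-1}W)]\ge s(\psi)$, whence also the identification $\mathcal M(\pa(\phi)/\bik(\phi),\tilde\psi)=\{\pi U:U\in\mathcal M(\pa(\phi),\psi)\}$. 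Without these two arguments your proposal only establishes $s(\psi)\le s(\phi)$ and (implicitly) $s(\tilde\psi)\le s(\psi)$, and since your (5) rests on the cofinality in (3) and the identification in (4), it is incomplete as well.
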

\begin{proof}
(1) is proved in \cite[Section 9]{Willis_endo}.

\smallskip\noindent
(2) Clearly, $V_+\leq U_+$. Since $U_+\leq \pa(\phi)$ by \eqref{U++inpar}, it follows that $U_+\leq U\cap \pa(\phi)=V$. Since $U_+\leq\phi U_+$ by Lemma \ref{basic_willis}(3), then Lemma \ref{leqU+}(2) yields that $U_+\leq V_+$, and so $U_+=V_+$.
Furthermore, $U_-\cap \pa(\phi)=\bigcap_{n\in\N} \phi^{-n}U\cap \pa(\phi)=\bigcap_{n\in\N} \psi^{-n}V=V_-$. 
\\ For the last part of the statement, assume that $U=U_+U_-$, then by modularity
$$V=U\cap \pa(\phi)=(U_+U_-)\cap \pa(\phi)=U_+(U_-\cap \pa(\phi))=V_+V_-\,,$$
showing that $V$ is tidy above for $\psi$.

\smallskip\noindent
(3) Let $U\in \mathcal M(G,\phi)$ and let $V=U\cap \pa(\phi)$. By part (2), $V$ is tidy above for $\psi$ and $V_+=U_+$, so by Lemma \ref{(B)}(1,2),
\begin{equation}\label{reas}
s(\phi)=[\phi U_+:U_+]=[\psi V_+:V_+]\geq s(\psi)\,,
\end{equation}
showing that $s(\phi)\geq s(\psi)$. \\
%
Let now $V\in  \mathcal M(\pa(\phi),\psi)$ and in view of Corollary \ref{base_sub}(1) choose $U'\in \B(G)$ such that $V=U'\cap \pa(\phi)$. By \cite[Proposition 3.9]{Willis_endo}, there exists $n\in\N$ such that $U:=(U')_{-n}$ is tidy above for $\phi$.
Since $\pa(\phi)$ is $\phi$-stable, we have that $U\cap \pa(\phi)=V_{-n}$, where $V_{-n}$ is tidy for $\psi$ by \cite[Proposition 7.10]{Willis_endo}.
Since $(V_{-n})_+=U_+$ by item (2), and applying Lemma \ref{(B)}(1,2), it follows that
$$s(\psi)=[\psi (V_{-n})_+:(V_{-n})_+]=[\phi U_+:U_+]=[\phi U:U\cap \phi U]\geq s(\phi)\geq s(\psi)\,.$$
Thus, $s(\psi)=s(\phi)$ and $U$ is tidy for $\phi$ (note that $U\cap \pa(\phi)\leq V$). 
The inclusion $\mathcal M(\pa(\phi),\psi)\supseteq \{U\cap \pa(\phi):U\in \mathcal M(G,\phi)\}$ follows now from \eqref{reas}.
%
%

\smallskip\noindent
(4) Let $U\in\mathcal M(\pa(\phi),\psi)$. Then $\bik(\phi)=\bik(\psi)\leq\nub(\psi)\leq U$, so $\pi U\in\B(\pa(\phi)/\bik(\phi))$ and, by Lemma \ref{gi}(6), $s(\psi)=[\psi U:U\cap\psi U]=[\pi\psi U:\pi(U\cap\psi U)]=[\tilde\psi \pi U:\pi U\cap\tilde\psi \pi U]\geq s(\tilde\psi)$.
To prove the converse inequality, let $W\in\mathcal M(\pa(\phi)/\bik(\phi),\tilde\psi)$. Since $\bik(\phi)\normal\pa(\phi)$ and $\bik(\phi)$ is compact, $\pi^{-1}W\in\B(\pa(\phi))$
. Moreover, by Lemma \ref{gi}(5) and since $\bik(\phi)$ is $\phi$-stable, $$s(\tilde\psi)=[\tilde\psi W:W\cap \tilde\psi W]=[\pi^{-1}(\tilde\psi W):\pi^{-1}(W\cap\tilde\psi W)]=[\psi(\pi^{-1}W):\pi^{-1}W\cap\psi(\pi^{-1}W)]\geq s(\psi)\,.$$
It is now clear from the above proof that $\mathcal M(\pa(\phi)/\bik(\phi),\tilde\psi)=\{\pi U:U\in \mathcal M(\pa(\phi),\psi)\}$.

\smallskip\noindent
(5) follows from parts (3) and (4) using that $\nub(\phi)$ is contained in $\pa(\phi)$.
\end{proof}

As a consequence of the above lemma, we can define $\nub(\phi)$ without using the scale or minimizing subgroups. In fact, when $\phi$ is a topological automorphism,  Willis in \cite{Willis_nub} characterized $\nub(\phi)$ as the largest compact $\phi$-stable subgroup on which $\phi$ acts ergodically; equivalently, it is the largest compact $\phi$-stable subgroup with no proper relatively open $\phi$-stable subgroups. Using this, we obtain the following

\begin{corollary}
Let $G$ be a \tdlc group and $\phi\in \End(\phi)$. Then $\nub(\phi)$ is the largest compact $\phi$-stable subgroup of $G$ which contains $\bik(\phi)$ and such that, if $A\leq \nub(\phi)$ is a relatively open $\phi$-stable subgroup containing $\bik(\phi)$, then $A=\nub(\phi)$.
\end{corollary}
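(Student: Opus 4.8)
The plan is to deduce the statement from Willis' description of the nub of a topological automorphism (recalled above) via the reduction machinery of Lemma~\ref{red_scale}. Write $B:=\bik(\phi)$, a compact $\phi$-stable subgroup which is normal in $\pa(\phi)$, and let $\pi\colon\pa(\phi)\to\pa(\phi)/\bik(\phi)$ be the canonical projection, so that $\ker\pi=B$. By Lemma~\ref{red_scale}(1,5), $\psi=\phi\restriction_{\pa(\phi)}$ is surjective, $\tilde\psi$ is a topological automorphism of $\pa(\phi)/\bik(\phi)$, $\nub(\phi)=\nub(\psi)$, and $\pi(\nub(\psi))=\nub(\tilde\psi)$. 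The heart of the argument will be a bijective, inclusion-preserving correspondence $A\mapsto\pi A$ between the relatively open $\phi$-stable subgroups $A$ with $B\leq A\leq K$ of a given compact $\phi$-stable subgroup $K\supseteq B$ of $G$, and the relatively open $\tilde\psi$-stable subgroups of $\pi K$. Once this is available, the corollary becomes a translation of Willis' result across $\pi$.

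First I would record the elementary fact that \emph{every} compact $\phi$-stable subgroup $K$ of $G$ lies inside $\pa(\phi)$: for $x\in K$, using surjectivity of $\phi\restriction_K$ one builds a backward orbit $(x_n)_{n\in\N}\subseteq K$ with $x_0=x$ and $\phi(x_{n+1})=x_n$, which is bounded since $K$ is compact, so $x\in\pa(\phi)$. Hence one may always work inside $\pa(\phi)$ with $\psi$ in place of $\phi$; moreover, since $\phi\restriction_K$ is onto $K$ and $\tilde\psi$ is injective, $\tilde\psi$ restricts to a topological automorphism of the compact group $\pi K$. Granting the correspondence, one inclusion is immediate: if $A\leq\nub(\phi)$ is relatively open, $\phi$-stable and contains $B$, then $\pi A$ is a relatively open $\tilde\psi$-stable subgroup of $\pi(\nub(\phi))=\nub(\tilde\psi)$, which by Willis' characterization has no proper such subgroup, so $\pi A=\nub(\tilde\psi)$ and hence $A=\pi^{-1}(\pi A)=\nub(\phi)$; thus $\nub(\phi)$ lies in the class described in the statement. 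Conversely, for any $K$ in that class the correspondence forces every relatively open $\tilde\psi$-stable subgroup of $\pi K$ to equal $\pi K$, so $\pi K$ is a compact $\tilde\psi$-stable subgroup of $\pa(\phi)/\bik(\phi)$ with no proper relatively open $\tilde\psi$-stable subgroups; Willis' characterization (the nub being the \emph{largest} such) gives $\pi K\leq\nub(\tilde\psi)=\pi(\nub(\phi))$, and since $B\leq K$ and $B\leq\nub(\phi)$ this yields $K=\pi^{-1}(\pi K)\leq\pi^{-1}(\pi\nub(\phi))=\nub(\phi)$.

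The step I expect to be the main obstacle is the verification that the $\pi$-preimage of a relatively open $\tilde\psi$-\emph{stable} subgroup $B'\leq\pi K$ is $\phi$-\emph{stable}, and not merely $\phi$-invariant. For $A:=\pi^{-1}(B')$, the inclusion $\psi A\subseteq A$ follows at once from $\tilde\psi(\pi A)=\pi(\psi A)=B'$; for the reverse inclusion I will need to combine that $\tilde\psi\restriction_{\pi K}$ is onto $\pi K$, that $B'$ is $\tilde\psi$-stable, and that $B=\bik(\phi)$ is itself $\phi$-stable: given $a\in A$, pick $k\in K$ with $\pi k\in B'$ and $\tilde\psi(\pi k)=\pi a$, so that $\psi(kB)=\psi(k)B$ contains $a$, whence $a\in\psi A$. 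The remaining points---openness and monotonicity of $A\mapsto\pi A$, and bijectivity on subgroups containing $B=\ker\pi$---are routine, as $\pi$ and its restrictions to the relevant compact subgroups are open continuous homomorphisms; everything else is bookkeeping with Lemma~\ref{red_scale} and the definitions of $\pa(\phi)$ and $\bik(\phi)$.
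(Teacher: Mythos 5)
Your proposal is correct and follows essentially the same route as the paper: reduce to Willis' characterization of $\nub(\tilde\psi)$ for the induced automorphism $\tilde\psi$ of $\pa(\phi)/\bik(\phi)$, after observing that every compact $\phi$-stable subgroup lies in $\pa(\phi)$, and then translate open $\phi$-stable subgroups back and forth along $\pi$. The only difference is that you explicitly verify that $\pi$-preimages of relatively open $\tilde\psi$-stable subgroups are $\phi$-stable (not merely $\phi$-invariant), a point the paper's proof leaves implicit; your argument for it is sound.
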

\begin{proof}
We have already noticed that $\nub(\phi)$ is a compact $\phi$-stable subgroup of $G$ which contains $\bik(\phi)$. Furthermore, given a relatively open $\phi$-stable subgroup $\bik(\phi)\leq A\leq \nub(\phi)$,  then $\pi A$ is a relatively open $\tilde\psi$-stable subgroup of $\pi(\nub(\phi))=\pi(\nub(\psi))=\nub(\tilde\psi)$ (see Lemma \ref{red_scale}), where $\pi:\pa(\phi)\to \pa(\phi)/\bik(\phi)$ is the natural projection. By \cite[Corollary 4.7]{Willis_nub}, $\pi A=\nub(\tilde\psi)$, so that $A=\nub(\phi)$ as desired. 

It remains to show that $\nub(\phi)$ is the largest subgroup with these properties. Indeed, given any compact $\phi$-stable subgroup $K$ of $G$, then $K\leq \pa(\phi)$. In fact, for every $x\in K$ there exists $(x_n)_{n\in\N}\subseteq K$, such that $x_0=x$ and $\phi(x_{n+1})=x_n$ for all $n\in\N$ (use that $\phi K=K$); moreover, the closure of $(x_n)_{n\in\N}$ is compact, being $K$ compact. Suppose also that $\bik(\phi)\leq K$ and that, given a relatively open $\phi$-stable subgroup $\bik(\phi)\leq A\leq K$, then $A=K$. This means that $\pi K$ is a compact $\tilde\psi$-stable subgroup of $\pa(\phi)/\bik(\phi)$ with no proper relatively open $\tilde\psi$-stable subgroups. As $\nub(\tilde\psi)$ is the largest subgroup of $\pa(\phi)/\bik(\phi)$ with this property, $\pi K\leq \nub(\tilde\psi)=\pi(\nub(\phi))$, and so $K\leq \nub(\phi)$.
\end{proof}

We conclude this subsection by giving a counterpart of Lemma \ref{red_scale} for the topological entropy:

\begin{lemma}\label{redtopar}
Let $G$ be a \tdlc group and $\phi\in\End(G)$. Then:
\begin{enumerate}[\rm (1)]
\item $h_{top}(\phi)=h_{top}(\psi)$;
\item $h_{top}(\bar\phi)=h_{top}(\bar \psi)$, where $\bar \phi$ and $\bar \psi$ are the maps induced by $\phi$ and $\psi$ respectively on $G/\nub(\phi)$ and $\pa(\phi)/\nub(\phi)$.
\end{enumerate}
\end{lemma}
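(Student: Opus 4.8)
The plan is to get the two inequalities ``$\geq$'' for free from the monotonicity of the topological entropy under restriction to closed invariant subgroups (Lemma \ref{traj_sub}(2)), and to extract the two inequalities ``$\leq$'' from the limit-free descriptions of $h_{top}$ in terms of compact expanding subgroups, namely \eqref{simplier} and Proposition \ref{phiN}. The one structural fact making the ``$\leq$'' directions work is the following observation, which I record first: \emph{if $M\leq G$ is a compact subgroup with $M\leq\phi M$, then $M\leq\pa(\phi)$}, and hence also $\phi M\leq\pa(\phi)$ since $\phi\pa(\phi)=\pa(\phi)$ by Lemma \ref{red_scale}(1). This is immediate from the definition of $\pa(\phi)$: given $x=x_0\in M$, using $M\leq\phi M$ repeatedly one builds $(x_n)_{n\in\N}\subseteq M$ with $\phi(x_{n+1})=x_n$ for all $n$; being contained in the compact set $M$, this sequence is bounded, so $x\in\pa(\phi)$.

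For part (1), the inequality $h_{top}(\phi)\geq h_{top}(\psi)$ is just Lemma \ref{traj_sub}(2) applied to the closed $\phi$-stable subgroup $\pa(\phi)$. For the converse I would use the formula \eqref{simplier} (with $H=\{1\}$): by the observation above, every compact subgroup $M\leq G$ with $M\leq\phi M$ and $[\phi M:M]<\infty$ satisfies $M,\phi M\leq\pa(\phi)$, hence lies in the index set of the analogous supremum for $\psi$ on $\pa(\phi)$, so $\log[\phi M:M]=\log[\psi M:M]\leq h_{top}(\psi)$. Taking the supremum over all such $M$ gives $h_{top}(\phi)\leq h_{top}(\psi)$.

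For part (2), I would first note that $\nub(\phi)=\nub(\psi)$ by Lemma \ref{red_scale}(5), and that this is a compact $\phi$-stable subgroup contained in $\pa(\phi)$; consequently $\pa(\phi)/\nub(\phi)$ is a closed $\bar\phi$-stable subgroup of the \tdlc group $G/\nub(\phi)$, and $\bar\psi$ is exactly the restriction of $\bar\phi$ to it. Then $h_{top}(\bar\phi)\geq h_{top}(\bar\psi)$ follows once more from Lemma \ref{traj_sub}(2). For the converse I would use Proposition \ref{phiN} with $H=\nub(\phi)$: any compact subgroup $\nub(\phi)\leq M\leq G$ with $M\leq\phi M$ and $[\phi M:M]<\infty$ again satisfies $M,\phi M\leq\pa(\phi)$ by the observation, hence lies in the index set of the analogous supremum for $\psi$ on $\pa(\phi)$ with $H=\nub(\psi)$, so $\log[\phi M:M]=\log[\psi M:M]\leq h_{top}(\bar\psi)$; taking the supremum over all such $M$ gives $h_{top}(\bar\phi)\leq h_{top}(\bar\psi)$.

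I expect the only point needing genuine care to be the observation that a compact subgroup $M$ with $M\leq\phi M$ must sit inside $\pa(\phi)$; once that is established, both parts are routine bookkeeping with \eqref{simplier} and Proposition \ref{phiN}, together with the easy checks in part (2) that $\pa(\phi)/\nub(\phi)$ is closed and $\bar\phi$-stable and that $\bar\psi$ is the restriction of $\bar\phi$. There should be no genuine analytic difficulty, since the substantial work already sits in the Limit Free Formula and in Proposition \ref{phiN}.
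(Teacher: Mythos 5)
Your proof is correct and follows essentially the same route as the paper: both arguments reduce everything to the limit-free description of Proposition \ref{phiN} and compare the suprema over compact subgroups $M$ with $M\leq\phi M$ and $[\phi M:M]<\infty$. The one genuine (and welcome) difference is how you justify that these $M$ live inside $\pa(\phi)$: the paper proves the inequality $h_{top}(\bar\phi)\leq h_{top}(\bar\psi)$ by passing through Corollary \ref{coro_lim_free} and the cited fact \eqref{U++inpar} that $U_{+}\leq U_{++}\leq\pa(\phi)$, whereas you prove directly and elementarily that \emph{every} compact $M$ with $M\leq\phi M$ is contained in $\pa(\phi)$ (your lifting argument is sound, since $M\subseteq\phi M$ lets you choose each $x_{n+1}\in M$ with $\phi(x_{n+1})=x_n$, and the resulting sequence is bounded because $M$ is compact). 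This makes the two index sets in Proposition \ref{phiN} literally coincide, so both inequalities of each part drop out at once; it is slightly cleaner and more self-contained than the paper's version. One small caveat: in part (2) your appeal to Lemma \ref{traj_sub}(2) for the inequality $h_{top}(\bar\phi)\geq h_{top}(\bar\psi)$ is not literally licensed, because $G/\nub(\phi)$ is in general only a coset space (the nub need not be normal in $G$), so $\pa(\phi)/\nub(\phi)$ is not a ``closed subgroup'' of a \tdlc group in the sense required by that lemma. This is harmless, since that inequality already follows from your Proposition \ref{phiN} comparison (the supremum for $\bar\psi$ runs over a subset of the index set for $\bar\phi$), which is exactly how the paper handles it; I would simply delete the reference to Lemma \ref{traj_sub}(2) in part (2).
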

\begin{proof}
We verify just (2), as the proof of (1) follows the same arguments. 
%
%
Let us start noticing that, by Proposition \ref{phiN},
\begin{align*}
h_{top}(\bar\psi)&=\sup\{\log[\phi M:M]:  \nub(\phi)\leq M\leq \pa(\phi),\ M\ \text{compact},\ M\leq \phi M,\ [\phi(M):M]<\infty\}\\
&\leq\sup\{\log[\phi M:M]:  \nub(\phi)\leq M\leq G,\ M\ \text{compact},\ M\leq \phi M,\ [\phi M:M]<\infty\}=h_{top}(\bar \phi)\,.
\end{align*}
On the other hand, by Corollary \ref{coro_lim_free} and Proposition \ref{phiN},
\begin{align*}
h_{top}(\bar\phi)&=\{\log[\phi U_+:U_+]: \nub(\phi)\leq U\in \B(G)\}\\
&\leq \sup\{\log[\phi M:M]:  \nub(\phi)\leq M\leq \pa(\phi),\ M\ \text{compact},\ M\leq \phi M,\ [\phi M:M]<\infty\}\\
&=h_{top}(\bar\psi)\,.\qedhere\end{align*}
\end{proof}

\subsection{The topological entropy knows all the values of the scale}

We give first the precise relation, stated in the Introduction in \eqref{s=h}, between the topological entropy and the scale:

\begin{proposition}\label{scale}
Let $G$ be a \tdlc group and $\phi\in\End(G)$. Then
$$\log s(\phi)=h_{top}(\bar \phi)\,,$$
where $\bar \phi:G/\nub(\phi)\to G/\nub(\phi)$ is the map induced by $\phi$.
\end{proposition}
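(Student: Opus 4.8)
The plan is to prove the two inequalities $h_{top}(\bar\phi)\ge\log s(\phi)$ and $h_{top}(\bar\phi)\le\log s(\phi)$ separately: the first is a short consequence of the tools already set up, whereas the second is the substantial half, and I would obtain it by reducing to the case of a topological automorphism.

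\emph{Lower bound.} Since $\nub(\phi)$ is a compact $\phi$-stable subgroup of $G$ (Lemma \ref{Mbasebis}), Proposition \ref{phiN} with $H=\nub(\phi)$ gives $h_{top}(\bar\phi)=\sup\{\log[\phi M:M]:\nub(\phi)\le M\le G,\ M\text{ compact},\ M\le\phi M,\ [\phi M:M]<\infty\}$. Now pick any minimizing $V\in\mathcal M(G,\phi)$, which is tidy by \eqref{m=t}. Then $V_+$ is compact, $V_+\le\phi V_+$ with $[\phi V_+:V_+]<\infty$ by Lemma \ref{basic_willis}(3), and $\nub(\phi)\le V$ together with the $\phi$-stability of $\nub(\phi)$ forces $\nub(\phi)\le V_+$ by Lemma \ref{leqU+}(2). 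Hence $M=V_+$ is admissible in the supremum, and $h_{top}(\bar\phi)\ge\log[\phi V_+:V_+]=\log s(\phi)$ by Lemma \ref{(B)}(2).

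\emph{Upper bound.} I would first reduce to a topological automorphism via Section \ref{htopvsscale}. By Lemma \ref{redtopar}(2), $h_{top}(\bar\phi)=h_{top}(\bar\psi)$, where $\psi=\phi\restriction_{\pa(\phi)}$ and $\bar\psi$ is induced on $\pa(\phi)/\nub(\phi)$. Since $\bik(\phi)$ is a compact normal $\psi$-stable subgroup of $\pa(\phi)$ contained in $\nub(\phi)=\nub(\psi)$ and $\nub(\phi)/\bik(\phi)=\pi(\nub(\psi))=\nub(\tilde\psi)$ by Lemma \ref{red_scale}(5) (with $\pi\colon\pa(\phi)\to\pa(\phi)/\bik(\phi)$ the projection and $\tilde\psi$ the topological automorphism of Lemma \ref{red_scale}(1)), the canonical $\pa(\phi)$-equivariant uniform isomorphism $\pa(\phi)/\nub(\phi)\cong(\pa(\phi)/\bik(\phi))/\nub(\tilde\psi)$ conjugates $\bar\psi$ to the map $\overline{\tilde\psi}$ induced by $\tilde\psi$, i.e.\ to the map in the statement of the proposition applied to the pair $(\pa(\phi)/\bik(\phi),\tilde\psi)$. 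As the topological entropy of \S\ref{htop-sec} depends only on the uniformity and the map it is a conjugacy invariant, so $h_{top}(\bar\phi)=h_{top}(\overline{\tilde\psi})$; combined with $s(\phi)=s(\psi)=s(\tilde\psi)$ from Lemma \ref{red_scale}(3)--(4), the statement for $\phi$ reduces to the statement for the topological automorphism $\tilde\psi$, which is the content of \cite{BDG}. Alternatively the automorphism case can be argued within the present framework: by Corollary \ref{coro_lim_free} it amounts to showing $[\phi U_+:U_+]=s(\phi)$ for every $U\in\B(G,\nub(\phi))$; choosing $n$ with $U_{-n}$ tidy above and noting $\nub(\phi)\le U_{-n}$, the nontrivial input is that such a tidy-above subgroup is then minimizing, whence $[\phi U_+:U_+]=[\phi(U_{-n})_+:(U_{-n})_+]=s(\phi)$ using Proposition \ref{limit_free} together with the identity $H_{top}(\phi,U)=H_{top}(\phi,U_{-n})$ (a consequence of $(U_{-n})_{-m}=U_{-(n+m)}$).

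\emph{Main obstacle.} The difficulty is entirely in the upper bound, and precisely in that last point: one must rule out compact subgroups that contain $\nub(\phi)$ and are expanded by $\phi$ at a rate strictly exceeding $s(\phi)$ — equivalently, show that a tidy-above compact open subgroup containing $\nub(\phi)$ is automatically minimizing. This is exactly what the Limit Free Formula (Proposition \ref{limit_free}), its consequence Proposition \ref{phiN}, and the reductions of Section \ref{htopvsscale} are engineered to deliver; by contrast, the lower bound above is immediate once Proposition \ref{phiN} and Lemma \ref{(B)} are available.
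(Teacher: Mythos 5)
Your lower bound is correct (and could be shortened: a minimizing $V$ lies in $\B(G,\nub(\phi))$, so $h_{top}(\bar\phi)\geq H_{top}(\phi,V)=\log[\phi V_+:V_+]=\log s(\phi)$ directly by Proposition \ref{lim}(2), Proposition \ref{limit_free} and Lemma \ref{(B)}(2), without invoking Proposition \ref{phiN}). The upper bound, however, is not actually proved. Your primary route delegates the automorphism case to \cite{BDG}, but the statement you need there --- $\log s=h_{top}$ of the map induced on the quotient by the nub --- is precisely the result this proposition generalizes, not something you can import; \cite{BDG} relates $\log s(\phi)$ to $h_{top}(\phi)$ itself. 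Your fallback route reduces everything to the claim that a tidy-above $U\in\B(G,\nub(\phi))$ is automatically minimizing, which you explicitly label ``the nontrivial input'' and never establish or cite. As it stands, the half of the proposition that carries all the content rests on an unproved assertion.

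The irony is that the missing step is not needed at all, and in fact comes out as a corollary of the short argument the paper uses. By Lemma \ref{Mbasebis}, $\mathcal M(G,\phi)$ is a base for the neighborhoods of $\nub(\phi)$ in $G$ (this is the key input you overlooked: it follows from Willis's result that $\mathcal M(G,\phi)$ is closed under finite intersections together with Lemma \ref{Mbase}). Hence Lemma \ref{basesuff}(3) gives $h_{top}(\bar\phi)=\sup\{H_{top}(\phi,U):U\in\mathcal M(G,\phi)\}$, and for every minimizing $U$ one has $H_{top}(\phi,U)=\log[\phi U_+:U_+]=\log s(\phi)$ by Proposition \ref{limit_free} and Lemma \ref{(B)}(2); every term of the supremum equals $\log s(\phi)$ and both inequalities follow at once. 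No reduction to $\pa(\phi)$, $\bik(\phi)$ or automorphisms is required. (Your claim itself is then a consequence: for tidy-above $U\supseteq\nub(\phi)$, antimonotonicity against a minimizing $V\leq U$ forces $[\phi U:U\cap\phi U]=[\phi U_+:U_+]\leq s(\phi)$, so $U$ is minimizing --- but this is downstream of the proposition, not an input to it.)
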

\begin{proof}
Since $\mathcal M(G,\phi)$ is a base for the neighborhoods of $\nub(\phi)$ in $G$ by Lemma \ref{Mbase}, in view of Lemma \ref{basesuff}(3) we have $h_{top}(\bar \phi)=\sup\{H_{top}(\phi,U):U\in \mathcal M(G,\phi)\}$.
Furthermore, given $U\in\mathcal M(G,\phi)$, Proposition \ref{limit_free} and Lemma \ref{(B)}(2) give $H_{top}(\phi,U)=\log[\phi U_+:U_+]=\log s(\phi)$.
Thus, $h_{top}(\bar \phi)=H_{top}(\phi,U)=\log s(\phi)$, for any $U\in\mathcal M(G,\phi)$.
\end{proof}

As a consequence of Lemma \ref{red_scale}, Proposition \ref{scale} and Lemma \ref{redtopar}, we obtain
 $$\log s(\tilde\psi)=\log s(\psi)=\log s(\phi)=h_{top}(\bar\phi)=h_{top}(\bar\psi)\,.$$
 
Another consequence of Proposition \ref{scale} and Proposition \ref{phiN} is the following formula for the computation of the scale.

\begin{corollary}
Let $G$ be a \tdlc group and $\phi\in\End(G)$. Then
$$\log s(\phi)=\sup\{ \log[\phi M:M] : \nub(\phi) \leq M\leq G, M\ \text{compact}, M\leq \phi M, [\phi M:M]<\infty\}$$ 
\end{corollary}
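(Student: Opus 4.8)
The plan is to obtain the formula by combining the two previous results exactly as indicated in the parenthetical remark: Proposition \ref{scale} identifies $\log s(\phi)$ with $h_{top}(\bar\phi)$, where $\bar\phi\colon G/\nub(\phi)\to G/\nub(\phi)$ is the induced map, and Proposition \ref{phiN} expresses $h_{top}(\bar\phi)$ as a supremum of the quantities $\log[\phi M:M]$ over compact subgroups $M$ with $\nub(\phi)\leq M\leq G$, $M\leq\phi M$ and $[\phi M:M]<\infty$. So the proof is essentially a one-line chaining of two displayed equalities.

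More precisely, first I would invoke Proposition \ref{scale}, which is applicable since $\phi\in\End(G)$ and $G$ is \tdlc, to write $\log s(\phi)=h_{top}(\bar\phi)$ with $\bar\phi$ the map induced on the quotient by the compact $\phi$-stable subgroup $H:=\nub(\phi)$ (compactness and $\phi$-stability of $\nub(\phi)$ come from Lemma \ref{Mbasebis}). Then I would apply Proposition \ref{phiN} with this same $H=\nub(\phi)$: since $\nub(\phi)$ is compact and $\phi$-stable, the hypotheses of Proposition \ref{phiN} are met, and it yields
$$h_{top}(\bar\phi)=\sup\{\log[\phi M:M]:\nub(\phi)\leq M\leq G,\ M\ \text{compact},\ M\leq\phi M,\ [\phi M:M]<\infty\}\,.$$
Concatenating the two equalities gives the claimed identity.

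There is essentially no obstacle here; the only things to be careful about are purely bookkeeping: making sure that the subgroup playing the role of $H$ in both Proposition \ref{scale} and Proposition \ref{phiN} is the same $\nub(\phi)$, and that it indeed satisfies the standing compactness/$\phi$-stability hypotheses (which is recorded in Lemma \ref{Mbasebis}). One could, if desired, also note the alternative route through Lemma \ref{red_scale} and Lemma \ref{redtopar} (reducing to $\psi=\phi\restriction_{\pa(\phi)}$), but this is unnecessary for the statement as phrased. I would therefore keep the proof to two or three sentences: cite Proposition \ref{scale}, then cite Proposition \ref{phiN} applied with $H=\nub(\phi)$, and conclude.
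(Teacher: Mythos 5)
Your proposal is correct and is exactly the argument the paper intends: the corollary is stated as a consequence of Proposition \ref{scale} and Proposition \ref{phiN}, and chaining $\log s(\phi)=h_{top}(\bar\phi)$ with the formula from Proposition \ref{phiN} applied to the compact $\phi$-stable subgroup $H=\nub(\phi)$ (Lemma \ref{Mbasebis}) is precisely the intended one-line proof.
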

 
Since $\nub(\phi)$ is a compact $\psi$-stable subgroup of $\pa(\phi)$ which contains $\ker(\psi)$ by \eqref{kpainnub}, Theorem \ref{AT} applies to $\psi$ and $\nub(\phi)$, so we have the following

\begin{corollary}\label{par}
Let $G$ be a \tdlc group and $\phi\in\End(G)$. Then $$h_{top}(\phi)=h_{top}(\phi\restriction_{\nub(\phi)})+h_{top}(\bar \phi).$$
\end{corollary}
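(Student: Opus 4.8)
The plan is to deduce Corollary~\ref{par} directly from Theorem~\ref{AT} by checking that the hypotheses of that theorem are satisfied when we take the group to be $\pa(\phi)$, the endomorphism to be $\psi=\phi\restriction_{\pa(\phi)}$, and the distinguished subgroup to be $H=\nub(\phi)$. Recall from Lemma~\ref{red_scale}(1) that $\psi$ is a (surjective) continuous endomorphism of the \tdlc group $\pa(\phi)$, and from Lemma~\ref{Mbasebis} (together with the inclusion $\nub(\phi)\leq\pa(\phi)$) that $\nub(\phi)$ is a compact subgroup of $\pa(\phi)$. Moreover $\nub(\phi)$ is $\phi$-stable, hence $\psi$-stable, and by \eqref{kpainnub} we have $\ker(\phi)\cap\pa(\phi)\leq\nub(\phi)$, i.e. $\ker(\psi)\leq\nub(\phi)$. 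Thus all the hypotheses of Theorem~\ref{AT} hold for $\psi$ and $H=\nub(\phi)$ (with $H$ compact, so the ``compact'' alternative applies), and we obtain
\begin{equation*}
h_{top}(\psi)=h_{top}(\psi\restriction_{\nub(\phi)})+h_{top}(\bar\psi)\,,
\end{equation*}
where $\bar\psi:\pa(\phi)/\nub(\phi)\to\pa(\phi)/\nub(\phi)$ is the induced map.

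Next I would transfer each of the three terms back to $\phi$ on $G$. For the left-hand side, Lemma~\ref{redtopar}(1) gives $h_{top}(\phi)=h_{top}(\psi)$. For the quotient term, Lemma~\ref{redtopar}(2) gives $h_{top}(\bar\phi)=h_{top}(\bar\psi)$, where $\bar\phi:G/\nub(\phi)\to G/\nub(\phi)$ is the map induced by $\phi$. Finally, since $\psi$ is by definition the restriction of $\phi$ to $\pa(\phi)$ and $\nub(\phi)\leq\pa(\phi)$, the restriction $\psi\restriction_{\nub(\phi)}$ is literally the same map as $\phi\restriction_{\nub(\phi)}$, so $h_{top}(\psi\restriction_{\nub(\phi)})=h_{top}(\phi\restriction_{\nub(\phi)})$. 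Substituting these three identities into the displayed equation yields exactly
\begin{equation*}
h_{top}(\phi)=h_{top}(\phi\restriction_{\nub(\phi)})+h_{top}(\bar\phi)\,,
\end{equation*}
which is the assertion of the corollary.

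There is no real obstacle here: the whole content has already been isolated in Theorem~\ref{AT}, Lemma~\ref{red_scale}, Lemma~\ref{redtopar}, and the structural facts \eqref{kpainnub} and $\nub(\phi)\leq\pa(\phi)$. The only point that requires a word of care is the verification that $\nub(\phi)$ really does contain $\ker(\psi)$: one must note that $\ker(\psi)=\ker(\phi)\cap\pa(\phi)$ because $\psi$ is a restriction of $\phi$, and then invoke \eqref{kpainnub}. This is exactly the remark made in the sentence preceding the corollary in the excerpt, so the proof can be stated in one or two lines, essentially assembling the cited results.
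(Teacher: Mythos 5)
Your proof is correct and follows essentially the same route as the paper: apply Theorem~\ref{AT} to $\psi=\phi\restriction_{\pa(\phi)}$ with $H=\nub(\phi)$ (compact, $\psi$-stable, containing $\ker(\psi)=\ker(\phi)\cap\pa(\phi)$ by \eqref{kpainnub}), and then transfer all three terms back to $\phi$ via Lemma~\ref{redtopar} and the identity $\psi\restriction_{\nub(\phi)}=\phi\restriction_{\nub(\phi)}$. No gaps to report.
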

\begin{proof}
By Lemma \ref{redtopar}(1) and Theorem \ref{AT}, $h_{top}(\phi)=h_{top}(\psi)=h_{top}(\psi\restriction_{\nub(\phi)})+h_{top}(\bar \psi)$. Since $\phi\restriction_{\nub(\phi)}=\psi\restriction_{\nub(\phi)}$, and since $h_{top}(\bar\psi)=h_{top}(\bar\phi)$ by Lemma \ref{redtopar}(2), we get the thesis.
\end{proof}

As a consequence of Corollary \ref{par} and Proposition \ref{scale} we obtain the following formula:
\begin{equation}\label{AT_scala}h_{top}(\phi)=\log s(\phi)+h_{top}(\phi\restriction_{\nub(\phi)})\,.\end{equation}
Applying this formula, we obtain a characterization of when $h_{top}(\phi)=\log s(\phi)$:

\begin{corollary}\label{last}
Let $G$ be a \tdlc group and $\phi\in\End(G)$. The following are equivalent:
\begin{enumerate}[\rm (1)]
\item $h_{top}(\phi)=\log s(\phi)$;
\item $\nub(\phi)=\{1\}$;
\item $h_{top}(\phi\restriction_{\nub(\phi)})=0$.
\end{enumerate}
\end{corollary}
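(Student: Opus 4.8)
The plan is to derive Corollary~\ref{last} directly from the additivity formula \eqref{AT_scala}, namely $h_{top}(\phi)=\log s(\phi)+h_{top}(\phi\restriction_{\nub(\phi)})$, which already packages the main content; what remains is a short cycle of implications. First I would prove $(1)\Leftrightarrow(3)$: by \eqref{AT_scala}, $h_{top}(\phi)=\log s(\phi)$ holds precisely when the extra term $h_{top}(\phi\restriction_{\nub(\phi)})$ vanishes. Here one must note that this term is a nonnegative real (it is a supremum of logarithms of indices $\geq 1$, or simply a value in $\R_{\geq 0}\cup\{+\infty\}$ as guaranteed by the definition of $h_{top}$), and that $\log s(\phi)$ is finite since $s(\phi)\in\N_{>0}$; hence the sum equals $\log s(\phi)$ if and only if the summand is $0$. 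This gives $(1)\Leftrightarrow(3)$ immediately.

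Next I would establish $(2)\Rightarrow(3)$, which is trivial: if $\nub(\phi)=\{1\}$ then $\phi\restriction_{\nub(\phi)}$ is the identity (equivalently, the unique endomorphism) of the trivial group, and $h_{top}$ of any self-map of a one-point space is $0$ — directly from the formula $H_{top}(\phi\restriction_{\{1\}},\{1\})=\lim_n\frac1n\log[{\{1\}}:{\{1\}}]=0$ via Proposition~\ref{lim}, since $\B(\{1\})=\{\{1\}\}$. The real content is the converse $(3)\Rightarrow(2)$: I must show that if $h_{top}(\phi\restriction_{\nub(\phi)})=0$ then $\nub(\phi)$ is trivial. The idea is to use Lemma~\ref{red_scale}(5), which gives $\nub(\phi)=\nub(\psi)$ and $\pi(\nub(\psi))=\nub(\tilde\psi)$, to reduce to the automorphism case: since $\phi\restriction_{\nub(\phi)}=\psi\restriction_{\nub(\phi)}$ and $\psi$ restricts on the compact $\psi$-stable subgroup $\nub(\phi)$ to a \emph{topological automorphism} (because $\psi$ is surjective and $\nub(\phi)$ is $\psi$-stable, so $\psi\restriction_{\nub(\phi)}$ is a surjective endomorphism of a compact group, hence an automorphism), we are in the situation where Willis's characterization of $\nub$ applies: $\nub(\phi)$ is by construction the largest compact $\phi$-stable subgroup on which $\phi$ acts ergodically (equivalently with no proper relatively open stable subgroup), as recorded just before the Corollary preceding this one. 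For such a group, $h_{top}$ of an ergodic automorphism is strictly positive unless the group is trivial — this is the known fact that a nontrivial ergodic automorphism of a compact group has positive topological entropy (Yuzvinski / the compact Addition-Theorem circle of results). Thus $h_{top}(\phi\restriction_{\nub(\phi)})=0$ forces $\nub(\phi)=\{1\}$.

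The step I expect to be the main obstacle is precisely this last one: justifying that a nontrivial compact group carrying an ergodic automorphism must have positive topological entropy. One cannot simply quote additivity for this — it requires either invoking the structure theory of compact groups with ergodic automorphisms (there must be a nontrivial finite-dimensional quotient on which the induced automorphism is still ergodic, and finite-dimensional compact ergodic automorphisms have positive entropy by Yuzvinski's formula), or, in the spirit of this paper, exhibiting a compact subgroup $M$ with $M\lneq \phi M$ directly inside $\nub(\phi)$ so that $\log[\phi M:M]>0$ contributes to $h_{top}(\phi\restriction_{\nub(\phi)})$ via \eqref{simplier} — but the latter seems hard because $\phi$ restricted to $\nub(\phi)$ is an automorphism, so $[\phi M:M]$ need not exceed $1$ for subgroups of $\nub(\phi)$. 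Consequently the cleanest route is to pass through $\nub(\tilde\psi)$, use Willis's ergodicity characterization from \cite{Willis_nub}, and cite the compact case of the Addition Theorem (Yuzvinski \cite{Y}, or \cite{Dik+Manolo, Thomas}) to conclude that ergodicity plus nontriviality yields positive entropy. Assembling $(1)\Leftrightarrow(3)$, $(2)\Rightarrow(3)$, and $(3)\Rightarrow(2)$ then closes the cycle.
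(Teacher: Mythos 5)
Your handling of (1)$\Leftrightarrow$(3) and of (2)$\Rightarrow$(3) is correct and matches how the paper exploits \eqref{AT_scala} (since $s(\phi)\in\N_{>0}$, the term $\log s(\phi)$ is finite and can be cancelled). The genuine gap is in your (3)$\Rightarrow$(2). First, the claim that $\psi\restriction_{\nub(\phi)}$ is a topological automorphism because it is a surjective continuous endomorphism of a compact group is false: surjectivity does not imply injectivity (the one-sided shift on $(\Z/p)^{\N}$ is surjective and non-injective), and in fact \eqref{kpainnub} gives $\ker(\psi)=\ker(\phi)\cap\pa(\phi)\leq\nub(\phi)$, so the restriction to $\nub(\phi)$ has nontrivial kernel whenever $\psi$ does. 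Second, the ergodicity characterization of the nub from \cite{Willis_nub} is available only for automorphisms; the paper's endomorphism version (the corollary after Lemma \ref{red_scale}) is stated relative to $\bik(\phi)$, and to reach an honest automorphism you must pass to $\tilde\psi$ on $\pa(\phi)/\bik(\phi)$, whose nub is $\pi(\nub(\phi))$. But $\pi(\nub(\phi))$ can be trivial while $\nub(\phi)$ is not: for the one-sided shift on $G=(\Z/p)^{\N}$ one has $\nub(\phi)=\bik(\phi)=G$, so the quotient carries no information, yet one must still show $h_{top}(\phi\restriction_{\nub(\phi)})=\log p>0$. Your route would therefore additionally require proving $h_{top}(\phi\restriction_{\bik(\phi)})>0$ whenever $\bik(\phi)\neq\{1\}$, which you do not address. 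Third, even in the automorphism case, the statement that a nontrivial ergodic automorphism of a compact group has positive topological entropy is not a consequence of the compact Addition Theorem; it is a substantially deeper input (completely positive entropy/Bernoullicity results), well outside this paper's toolkit.

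The paper avoids all of this by proving (1)$\Rightarrow$(2) directly with a short scale argument: if $\nub(\phi)\neq\{1\}$, choose $U\in\B(G)$ not containing $\nub(\phi)$; by \cite[Proposition 3]{Willis_endo} there is $n$ such that $V:=U_{-n}$ is tidy above, and $V$ is not minimizing because every minimizing subgroup contains $\nub(\phi)$ while $V\leq U$ does not; hence, by Lemma \ref{(B)}(1) and Proposition \ref{limit_free}, $\log s(\phi)<\log[\phi V:V\cap\phi V]=\log[\phi V_+:V_+]=H_{top}(\phi,V)\leq h_{top}(\phi)$. Combined with (2)$\Rightarrow$(3) and (3)$\Rightarrow$(1) via \eqref{AT_scala}, this closes the cycle with no ergodic-theoretic input. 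If you wish to keep your (1)$\Leftrightarrow$(3) reduction, you still need a correct proof of (3)$\Rightarrow$(2) (equivalently of (1)$\Rightarrow$(2)), and this tidy-above argument is the natural one.
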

\begin{proof}
It is clear that (2) implies (3), while (3) implies (1) by \eqref{AT_scala}. 
It remains to verify that (1) implies (2). Indeed,  if $\nub(\phi)\neq \{1\}$, there exists $U\in\B(G)$ not containing $\nub(\phi)$. By \cite[Proposition 3]{Willis_endo}, there exists $n\in\N$ such that $V:=U_{-n}$ is tidy above but, since $V$ does not contain $\nub(\phi)$, it is not tidy below, that is, it is not minimizing by \eqref{m=t}. Thus,
\begin{equation*}\log s(\phi)<\log [\phi V:V\cap \phi V]=\log [\phi V_+:V_+]=H_{top}(\phi,V)\leq h_{top}(\phi)\,.\qedhere\end{equation*}
\end{proof}

Since $s(\phi)\in\N_{>0}$, we obtain that $h_{top}(\phi)$ is finite whenever $\nub(\phi)=\{1\}$. More generally, applying Theorem \ref{AT}, we get
\begin{equation*}
h_{top}(\phi)=\infty\ \ \  \Longleftrightarrow \ \ \ h_{top}(\phi\restriction_{\nub(\phi)})=\infty\,.
\end{equation*}

\bibliographystyle{plain}

\end{document}